\documentclass[a4paper,11pt]{amsart}
\usepackage{amsmath,amsthm,amssymb,amsfonts,enumerate,color,esint,bbm}
\usepackage[pdftex]{graphicx}
\usepackage{float}
\usepackage{tikz}
\usetikzlibrary{patterns}

\usepackage[shortlabels]{enumitem}
\usepackage{subfig}
\usepackage[colorlinks=true, allcolors=blue]{hyperref}
\usepackage{amsrefs}
\usepackage{pgfplots}
\pgfplotsset{
compat=newest,
colormap={blackwhite}{gray(0cm)=(.25); gray(1cm)=(1)}
}
\usetikzlibrary{hobby}
\usepackage{comment}

\newcommand{\N}{\mathbb{N}}
\newcommand{\R}{\mathbb{R}}
\newcommand{\Z}{\mathbb{Z}}

\newcommand{\abs}[1]{\left\vert#1\right\vert}
\def\({\left(}
\def\){\right)}

\newcommand{\one}{\mathbbm{1}}

\newcommand{\ep}{\varepsilon}

\newtheorem{thm}{Theorem}[section]
\newtheorem{prop}[thm]{Proposition}

\newtheorem{lem}[thm]{Lemma}

\theoremstyle{definition}
\newtheorem{defn}[thm]{Definition}
\newtheorem{rem}[thm]{Remark}

\numberwithin{equation}{section}

\allowdisplaybreaks

\author{Erisa Hasani}
\address{Department of Mathematics\\
The University of Texas at Austin\\
2515 Speedway, Austin\\
TX 78712, United States of America}
\email{ehasani@utexas.edu}

\author{Stefania Patrizi} 
\address{Department of Mathematics\\
The University of Texas at Austin\\
2515 Speedway, Austin\\
TX 78712, United States of America}
\email{spatrizi@math.utexas.edu}

 \keywords{Peierls-Nabarro model, 
 nonlocal integro-differential equations, 
 dislocation dynamics, 
fractional Allen-Cahn, fractional mean curvature, 
phase transitions}

\begin{document}

\title[Interacting fronts in the strongly nonlocal Allen--Cahn equation]{Interacting fronts in the strongly nonlocal Allen--Cahn equation}

\begin{abstract}
We study the sharp-interface limit of the fractional Allen--Cahn equation in $\R^n$, $n\geq 2$, in the strongly nonlocal regime $s\in(0,\frac12)$, for initial data consisting of finitely many nested transition layers. We identify a coupled geometric law for the motion of the resulting fronts: each velocity contains fractional mean curvature and a nonlocal interaction potential generated by the other fronts. These interactions persist while the fronts remain separated, in contrast to the independent motion in the critical regime $s=\frac12$. Assuming that the coupled law admits a smooth evolution with strictly nested sets on a given time interval, we prove that the Allen--Cahn solutions converge locally uniformly away from the moving fronts to the corresponding integer-valued phases. 
\end{abstract}

\maketitle

\section{Introduction}

We study the fractional Allen--Cahn equation
\begin{equation} \label{eq:pde}
\ep \partial_t u^{\ep} = \mathcal{I}^s_n [u^{\ep}]  -\frac{1}{\ep^{2s}}  W'(u^\ep) \quad \hbox{in}~(0,\infty)\times \R^n, ~n \geq 2,
\end{equation}
where $\ep>0$ is a small parameter, 
$\mathcal{I}^s_n=-c_{n,s}(-\Delta )^s$ denotes, up to a constant, the fractional Laplacian of order  $2s\in(0,1)$  in $\R^n$,  
and $W$ is a smooth multi-well periodic potential with wells on integers (see \eqref{eq:operator} and \eqref{eq:W} respectively). 
We focus on the singular regime $s\in(0,\frac12)$, corresponding to strongly nonlocal diffusion.  

The equation is related to the Peierls–Nabarro description of crystal dislocations \cite{PN1,PN2}; related one-dimensional and higher-dimensional formulations appear, for instance,  in \cite{CozziDavilaDelPino, DipierroFigalliValdinoci, DipierroPalatucciValdinoci, GonzalezMonneau, MeursPatrizi, MonneauPatrizi2,MonneauPatrizi,patval1, PatriziVaughan2}. We consider initial data formed by superposing transition layers around the boundaries of finitely many strictly nested sets. As $\varepsilon\to0$, these layers separate regions in which $u^\varepsilon$ approaches successive integer values. Our objective is to identify the motion of \emph{every} boundary in this limit.

For a single transition layer in the strongly nonlocal regime, the sharp-interface limit was established in \cite{HasaniPatrizi}, where the limiting interface moves by fractional mean curvature. At the critical exponent $s=\frac12$, under the scaling considered in \cite{PatriziVaughan2}, multiple fronts evolve independently by classical mean curvature, with no interaction term in their limiting velocities. For $s<\frac12$, by contrast, the contribution of one layer is felt at the locations of the others even while their boundaries remain a positive distance apart. The limiting law \eqref{eq:velocity-intro} makes this difference explicit: the velocity of each front contains both its fractional mean curvature and an interaction potential generated by the other fronts. The resulting evolution is a coupled system of geometric laws. Identifying this system, and proving that it governs the sharp-interface limit of \eqref{eq:pde}, is the principal result of this paper.

Theorem~\ref{thm:main_result} is conditional on the geometric evolution: we assume that \eqref{eq:velocity-intro} has a smooth solution on $[0,T]$ whose sets remain strictly nested. For a single front, Julin and La Manna \cite{JulinLaManna2020} proved short-time existence of smooth fractional mean-curvature flow from a bounded $C^{1,1}$ initial set. We do not establish general existence for the coupled system of multiple fronts.  Nevertheless, Theorem~\ref{thm:main_result} identifies the coupled motion of every front whenever a smooth, strictly nested evolution exists. Establishing short-time existence for \eqref{eq:velocity-intro} and developing a weak formulation that accounts for singularity formation are objectives of a future paper. Examples of singularity formation in nonlocal curvature flows \cite{CesaroniDipierroNovagaVal,CintiSinestrariValdinoci} show why a weak formulation is needed. Under the smooth-existence hypothesis, the solution of the fractional Allen--Cahn equation converges locally uniformly away from the moving boundaries to the sum of the characteristic functions of the nested sets. The hypothesis determines the time interval on which our convergence result applies. The parallel half-space and concentric-ball configurations in Appendix A give explicit classes of geometric evolutions and illustrate the interaction terms.

The stationary theory provides context for this result. In the strongly nonlocal regime, the sharp-interface limit of the corresponding stationary problem is governed by fractional perimeter \cite{SavinValdinoci}; related variational results appear in \cite{Alberti,AmbrosioDePhilippisMartinazzi,Contigarmul,GarroniMuller}. Nonlocal minimal surfaces and their fractional mean curvature were studied in \cite{CRS}. These results explain the curvature term for one interface. The additional integrals in \eqref{eq:velocity-intro} describe the dynamical effect of the other interfaces and are essential to the multiple-front problem.

\subsection{Setting of the problem and main result}
The operator $\mathcal{I}^s_n$ is a nonlocal integro-differential operator and is defined on functions $u \in C^{0,1}(\mathbb{R}^n)$ by
\begin{equation}\label{eq:operator}
\mathcal{I}_n^s u(x) 
 = \int_{\R^n} \left( u(x+z) - u(x)\right) \,\frac{dz}{\abs{z}^{n+2s}}, \quad x \in \R^n.
\end{equation}
For further background on fractional Laplacians, see for example \cites{Hitchhikers,Stinga}.

The potential $W$ satisfies
\begin{equation}\label{eq:W}
\begin{cases}
W \in C^{4, \beta} (\R) & \hbox{for some}~0 < \beta <1 \\
W(u+1) = W(u) & \hbox{for any}~ u \in \R\\
W= 0 & \hbox{on}~\Z\\
W>0 & \hbox{on}~\R \setminus \Z\\
W''(0) >0.
\end{cases}
\end{equation}

We let $u^\ep$ be the solution to \eqref{eq:pde} when the initial condition $u_0^\ep$ is a superposition of layer solutions.  The layer solution (also called the phase transition) $\phi :\R \to (0,1)$ is the unique solution to 
\begin{equation} \label{eq:standing wave}
\begin{cases}
 C_{n,s}  \mathcal{I}_1^s[\phi] = W'(\phi) & \hbox{in}~\R\\
 \dot{\phi}>0 & \hbox{in}~\R\\
\phi(-\infty) = 0, \quad \phi(+\infty)=1,\quad \phi(0) = \frac{1}{2},
\end{cases}
\end{equation}
where $\mathcal{I}_1^s$ denotes the nonlocal operator in \eqref{eq:operator} with $n=1$ and the constant $C_{n,s}>0$ (given explicitly in \eqref{eq:Cns}) depends only on $s \in (0,\frac{1}{2})$ and on the dimension $n$.

For a fixed $N \in \N$, let $(\Omega_0^i)_{i=1}^N$ be a finite sequence of subsets of $\R^n$ satisfying 
\begin{equation}\label{Omega_0^iassumptions}
\begin{cases}
 \Omega_0^{i}\text{ is an open  bounded domain with smooth boundary }\Gamma_0^i := \partial \Omega_0^i,\\
 \Omega_{0}^{i+1} \subset \subset \Omega_0^{i},\quad i=1,\ldots,N-1.
\end{cases}
\end{equation}
By the strict nesting condition, the boundaries $(\Gamma_0^i)_{i=1}^N$ are pairwise separated. 

Let $d_i^0(x)$ be the signed distance function associated to  $\Omega_0^i$, $i=1,\dots, N$, given by
\begin{equation}\label{eq:initial d_i}
d_i^0(x) = \begin{cases}
d(x, \Gamma_0^i) & \hbox{if}~x \in \Omega_0^i \\
-d(x,\Gamma_0^i) & \hbox{otherwise}.
\end{cases}
\end{equation}
We assume the following well-prepared initial condition for the solution of \eqref{eq:pde}
\begin{equation}\label{initial_data}
u_0^\ep(x) = \sum_{i=1}^N \phi\left(\frac{d_i^0(x)}{\ep}\right),
\end{equation}
see Figure 1. 

Let $(\Omega_t^i)_{i=1}^N$, with $t\in [0,T]$, be the smooth evolution of the initial family $(\Omega_0^i)_{i=1}^N$,  governed by the following system of laws. Denote $\Gamma_t^i:=\partial\Omega_t^i$.
 For each $i=1,\dots,N$, the set $\Omega_t^i$ evolves in the direction of its outward unit normal vector with scalar velocity
\begin{equation}\label{eq:velocity-intro}
v_i(t,x)=c_0\left(\frac{1}{2} H_{2s}(\Omega^i_t)(x)-\sum_{j=1}^{i-1} \int_{(\overline{\Omega_t^j})^c} \frac{dz}{|z-x|^{n+2s}}  +\sum_{j=i+1}^{N} \int_{\Omega^j_t} \frac{dz}{|z-x|^{n+2s}}\right),\qquad x\in \Gamma_t^i,
\end{equation}
where $ H_{2s}(\Omega^i_t)$ denotes the fractional mean curvature of order $2s$ of $\Omega^i_t$ and $c_0>0$ is an explicit constant (see \eqref{def:c0}).
See  Section \ref{sec:FMC} for the definition of the fractional mean curvature of a set. 
The second and third terms on the right-hand side of \eqref{eq:velocity-intro} describe the interaction  of $\Omega_t^i$ with the other evolving sets $\Omega_t^j$, $j\neq i$.
Due to the singularity at  $z=x$,  these interaction terms are well defined only as long as the evolving sets remain strictly nested. Accordingly, we require that 
\begin{equation}\label{nestingassumption}
\Omega_t^{i+1} \subset \subset \Omega_t^i
\qquad \text{for all } i=1,\ldots,N-1,\quad t\in[0,T].
\end{equation}

\begin{figure}[h]
\centering
\subfloat[Initial sets in $\R^n$]{
\begin{tikzpicture}[scale=0.45, use Hobby shortcut, closed=true]
\draw[magenta, line width=.75pt] (1.5,0) ellipse (6.5cm and 3.5cm);
\node[magenta] at (6.5,0) {$\Omega_0^1$};
\draw[blue, line width=.75pt] (1,0) ellipse (4cm and 2.25cm);
\node[blue] at (3,0) {$\Omega_0^2$};
\draw[cyan, line width=.75pt] (0,0) ellipse (1.5cm and 1cm);
\node[cyan] at (0,0) {$\Omega_0^3$};
\end{tikzpicture}
}
\qquad
\subfloat[Initial condition $u_0^{\ep}$ for small $\ep>0$]{
\begin{tikzpicture}[scale=0.45, use Hobby shortcut, closed=false]
\draw[magenta, line width=.25pt] (1.5,0) ellipse (6.5cm and .5cm);
\draw[blue, line width=.25pt] (1,0) ellipse (4cm and .2cm);
\draw[cyan, line width=.25pt] (0,0) ellipse (1.5cm and .1cm);
\draw[densely dotted, line width=.5pt] (1.5,.5) ellipse (6.5cm and .5cm);
\draw[densely dotted, line width=.5pt] (1,1.5) ellipse (4cm and .2cm);
\draw[densely dotted, line width=.5pt] (0,2.5) ellipse (1.5cm and .1cm);
\draw[line width=.75pt]
    (-7,0)..(-6.5,0)..(-5.5,0)..(-5.1,.1)..(-5,.5)..(-4.9,.9)..(-4.5,1)..
    (-3.5,1)..(-3.1,1.1)..(-3,1.5)..(-2.9,1.9)..(-2.5,2)..
    (-2.1,2)..(-1.7,2.1)..(-1.5,2.5)..(-1.3,2.9)..(-.9,3)..(0,3)..
    (.9,3)..(1.3,2.9)..(1.5,2.5)..(1.7,2.1)..(2.1,2)..
    (4.5,2)..(4.9,1.9)..(5,1.5)..(5.1,1.1)..(5.5,1)..
    (7.5,1)..(7.9,.9)..(8,.5)..(8.1,.1)..(8.5,0)..(9.5,0)..(10,0);
\node[opacity=0] at (1,-1.5) {};
\end{tikzpicture}
}
\caption{Initial configuration for $N=3$ in dimension $n=2$.}
\label{fig:initial}
\end{figure}

The following is our main result.

\begin{thm}\label{thm:main_result}
Assume that \eqref{eq:W} and \eqref{Omega_0^iassumptions} hold.  Let $u^\ep = u^\ep(t,x)$ be the unique solution of  \eqref{eq:pde} with initial datum $u_0^\ep: \R^n \to (0,N)$ defined by \eqref{initial_data}, 
where $\phi$ solves \eqref{eq:standing wave},  $d_i^0$ is given in \eqref{eq:initial d_i}, and $N\geq1$. Suppose that there exists $T>0$ such that the system of mean curvature flows \eqref{eq:velocity-intro} admits a smooth solution $(\Omega_t^i)_{i=1}^N$ on $[0,T]$ starting from $(\Omega_0^i)_{i=1}^N$ and satisfying \eqref{nestingassumption}. 
Then, as $\ep \to 0$, the solution $u^\ep$ satisfies
\[
u^\ep \to \begin{cases}
N & \Omega_t^N,\\
 i  \quad \,\, \hbox{locally uniformly in}~&\Omega_t^i\setminus\overline{\Omega_t^{i+1}}, \quad i=1,\dots, N-1,\\
 0 & \R^n\setminus\overline{\Omega_t^1}.
\end{cases}
\]
\end{thm}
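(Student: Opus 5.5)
The plan is a barrier argument via the comparison principle for \eqref{eq:pde}. We build explicit sub- and supersolutions that trap $u^\ep$ and converge to the claimed integer-valued limit. Since the flow $(\Omega_t^i)_{i=1}^N$ is smooth and strictly nested on $[0,T]$, we fix $\delta>0$ smaller than half of the minimal distance between distinct fronts. We then let $d_i(t,x)$ be the signed distance to $\Gamma_t^i$, modified outside a $\delta$-tube so that it is smooth and bounded with bounded derivatives. For a small parameter $\sigma>0$ we take the ansatz
\[
u^\pm(t,x)=\sum_{i=1}^N\Big[\phi\Big(\frac{d_i(t,x)\pm\sigma(t)}{\ep}\Big)+\ep^{2s}\,\psi\Big(\frac{d_i(t,x)\pm\sigma(t)}{\ep}\Big)\,\tilde c_i^\pm(t,x)\Big]\pm\ep^{2s}\sigma,
\]
with $\sigma(t)=\sigma e^{Kt}$. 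Here $\psi$ is the corrector solving the linearized layer equation
\[
C_{n,s}\mathcal I_1^s\psi-W''(\phi)\psi=\dot\phi+\tfrac{W''(\phi)-W''(0)}{W''(0)}\,(\cdot),
\]
as in the single-front analysis of \cite{HasaniPatrizi}. The coefficient $\tilde c_i^\pm$ is the (perturbed) velocity $v_i\mp\sigma$ from \eqref{eq:velocity-intro}, extended smoothly off $\Gamma_t^i$. The constant $c_0$ is fixed so that the solvability (Fredholm) condition for $\psi$ produces exactly the factor $c_0$ in \eqref{eq:velocity-intro}.

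The key step is to compute $\ep\partial_t u^\pm-\mathcal I^s_n u^\pm+\ep^{-2s}W'(u^\pm)$ near each front $\Gamma^i_t$, i.e.\ where $|d_i|<\delta$. There we split $\mathcal I^s_n u^\pm$ into the contribution of the $i$-th layer and that of the others. For the $i$-th layer we use the expansion from \cite{HasaniPatrizi}:
\[
\mathcal I^s_n\Big[\phi\Big(\tfrac{d_i}{\ep}\Big)\Big]=\ep^{-2s}C_{n,s}\mathcal I_1^s\phi\Big(\tfrac{d_i}{\ep}\Big)+\tfrac{1}{2}\,c\,H_{2s}(\Omega^i_t)+o(1),
\]
which is valid since $2s<1$ makes the curvature correction of order one and nonlocal. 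For $j\neq i$, the function $\phi(d_j/\ep)$ is, up to $o(1)$ errors away from $\Gamma^j_t$, equal to $\chi_{\Omega^j_t}$. Near $\Gamma^i_t$ we have $|z-x|\ge\delta$ on $\Gamma^j_t$, so the singular kernel is integrable across $\Gamma^j_t$ and
\[
\mathcal I^s_n[\phi(d_j/\ep)](x)\to\int(\chi_{\Omega_t^j}(z)-\chi_{\Omega_t^j}(x))|z-x|^{-n-2s}dz .
\]
For $j>i$ this equals $\int_{\Omega^j_t}|z-x|^{-n-2s}dz$. For $j<i$, since $x\in\Omega^j_t$, it equals $-\int_{(\overline{\Omega^j_t})^c}|z-x|^{-n-2s}dz$. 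This is exactly the interaction part of $v_i$. Next we Taylor-expand $W'(u^\pm)$ using periodicity: near $\Gamma^i_t$ the other layers contribute an integer plus an error $O(\ep^{2s})$ (from the algebraic tails $\phi(\tau)-\chi_{\tau>0}\sim|\tau|^{-2s}$). These errors must be absorbed, either through the $W''(\phi)$ structure of the corrector or by the $\sigma$-shift, much as in \cite{PatriziVaughan2}. The time derivative gives $-\dot\phi\,\partial_t d_i/\ep\cdot\ep=\dot\phi\, v_i$ at leading order. The solvability condition then cancels the $O(1)$ terms, and the $\pm\sigma$ perturbations leave a strictly signed remainder of size $\sigma\dot\phi+\sigma W''\cdots$. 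The plus-one choice $e^{Kt}$ controls the Lipschitz dependence of $v_i$ on the sets, including the interaction integrals.

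Away from all fronts, where $|d_i|\ge\delta$ for every $i$, the function $u^\pm$ is within $O(\ep^{2s})$ of an integer. The sign then comes from $W''(0)>0$ acting on the $\pm\ep^{2s}\sigma$ shift, which dominates the $O(\ep^{2s})$ tail terms once $\sigma$ is fixed and $\ep$ is small. This region is where I expect the main obstacle: the tails of all $N$ layers are only algebraically small and the nonlocal operator couples every region. Uniform estimates of $\mathcal I^s_n$ applied to the modified distance functions, and of the tail sums, will have to be done carefully, mirroring the one-front lemmas of \cite{HasaniPatrizi}.

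For the initial ordering, $u^-(0)\le u_0^\ep\le u^+(0)$ holds by monotonicity of $\phi$ together with the shift $\sigma$, after absorbing the $\ep^{2s}\psi$ terms. Comparison then gives $u^-\le u^\ep\le u^+$ on $[0,T]$. Finally, let $\ep\to0$ and then $\sigma\to0$. Locally uniformly in $\Omega^i_t\setminus\overline{\Omega^{i+1}_t}$ we have $d_j\ge c>0$ for $j\le i$ and $d_j\le -c$ for $j>i$. Hence both barriers converge to $i$, which yields the statement of Theorem~\ref{thm:main_result}.
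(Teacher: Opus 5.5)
Your architecture matches the paper's: superposed shifted layers plus correctors, a shift growing like $e^{Kt}$, comparison, then $\ep\to0$ and $\sigma\to0$. Your limit $\mathcal{I}^s_n[\phi(d_j/\ep)]\to P[d_j]$ near $\Gamma^i_t$ is Lemma~\ref{lem:interaction asymptotics}. But the barrier as written is not a sub/supersolution, and it fails exactly in the region you flag.

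You count the tails $\phi(d_i/\ep)-H(d_i/\ep)$ as $O(\ep^{2s})$ errors dominated by the shift $\pm\ep^{2s}\sigma$. In the equation, however, they enter as $\ep^{-2s}W''(\cdot)\,(\phi(d_i/\ep)-H(d_i/\ep))$, which is of order one. So is $\mathcal{I}^s_n[\phi(d_i/\ep)]\approx P[d_i]$ at fixed distance from $\Gamma^i_t$; this is the $s<\frac12$ effect. These two order-one quantities cancel only for flat fronts. By Lemma~\ref{lem:a_ep and frac laplacians} and $C_{n,s}\mathcal{I}^s_1\phi=W'(\phi)$, away from all fronts your residual is $-\sum_i\bar a_\ep[d_i]\pm W''(0)\sigma+O(R^{-2s})+o_\ep(1)$. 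There $\bar a_\ep[d_i]$ is of order one, has no sign, and does not depend on $\sigma$. For a ball $B_r$ and a point at distance $L$ outside it, $\bar a_\ep\to\int_{B_r}|z-x|^{-n-2s}\,dz-\frac{C_{n,s}}{2sL^{2s}}<0$ up to $O(R^{-2s})$. So your subsolution has positive residual there once $\sigma$ is small, and the corrector cannot help since $\psi(\pm\infty)=0$. The same defect appears at the fronts. Expanding your ansatz near $\Gamma^{i_0}_t$ and using the corrector equation leaves $-\frac{W''(\phi_{i_0})}{W''(0)}\sum_{i}\bar a_\ep[d_i]$, which is of order one and changes sign across the layer.

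The missing ingredient is the bulk corrector $w$ of Section~\ref{Ansatzsection}. The paper's barrier \eqref{eq:barrier defn} contains $\frac{\ep^{2s}}{W''(0)}\sum_i\bar a_\ep[d_i]$, and its contribution through the Taylor expansion of $W'$ cancels exactly that term. The factor $\frac{W''(\phi)-W''(0)}{W''(0)}$ in \eqref{eq:linearized wave}, which you borrowed, exists to replace $W''(\phi_{i_0})$ by $W''(0)$ in front of such order-$\ep^{2s}$ bulk terms, with the difference absorbed into the velocity through $c_0\dot\phi$. These bulk terms are this correction, the constant shift, and the other layers' tails, which generate the $P_\ep[d_j]$. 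Without $w$ in the ansatz, that factor has nothing to act on.

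Once $w$ is added, the initial ordering becomes a second real difficulty. The term $\frac{\ep^{2s}}{W''(0)}\bar a_\ep[d_i]$ is an unsigned perturbation of size $C\ep^{2s}$. Far from the fronts, the gain from the spatial shift is only about $\ep^{2s}\sigma|d_i|^{-2s-1}$, and a constant $\ep^{2s}\sigma$ is too small to dominate it. A large constant would instead change the front velocity by an order-one amount. The paper resolves this with $\mu$ in \eqref{def:mu_definition}: $\mu=\sigma$ within $\delta$ of the fronts and $\mu=\sigma/\delta^{2s}$ away from them. It takes $\delta=\delta(\ep)\to0$ with $\ep/\delta^2\to0$, so that $\ep^{2s}\mathcal{I}^s_n\mu$ and the transition-zone terms vanish (Case 2 of Lemmas~\ref{lem:barrier} and \ref{lem:initial_sub}).

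Letting $\delta\to0$ is also needed for Lemma~\ref{thm: b_ep fmc}. That lemma identifies $\bar a_\ep[d_i]$ with $\kappa[x,d_i]$ only up to $o_\delta(1)$, and $\kappa[x,d_i]$ is the curvature of the level set through $x$, not $\frac12H_{2s}(\Omega^i_t)$. With your $\delta$ fixed at half the front separation, the expansion you use on $\{|d_i|<\delta\}$ is not available.
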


\begin{figure}[h]
\centering
\begin{tikzpicture}[scale=0.55, use Hobby shortcut, closed=true]

\def\Omegone{(0.0, 4.2) (4.55, 3.3) (6.35, 1.0) (5.1, -1.1)
            (3.5, -3.6) (-0.55, -4.0) (-4.2, -3.0) (-5.55, -0.9)
            (-5.4, 1.6) (-3.4, 3.5)};

\def\Omegtwo{(0.5, 2.6) (2.85, 1.9) (3.8, 0.4) (3.1, -1.2)
            (1.35, -2.4) (-1.5, -2.5) (-3.1, -1.5) (-3.4, 0.1)
            (-2.7, 1.9) (-1.0, 2.8)};

\def\Omegthree{(0.15, 1.3) (1.35, 0.9) (1.75, 0.0) (1.35, -0.8)
            (0.0, -1.1) (-1.4, -0.8) (-1.9, 0.1) (-1.2, 1.0)};

\draw[magenta, line width=.65pt] plot[smooth cycle] coordinates {\Omegone};
\draw[blue, line width=.65pt] plot[smooth cycle] coordinates {\Omegtwo};
\draw[cyan, line width=.65pt] plot[smooth cycle] coordinates {\Omegthree};

\node at (6.33, 3.8) {$u^{\ep} \to 0$};
\node at (4.3, 2.4) {$u^{\ep} \to 1$};
\node at (2.0, 1.45) {$u^{\ep} \to 2$};
\node at (0.0, 0.05) {$u^{\ep} \to 3$};

\node[magenta] at (7.1, 1.0) {$\Gamma_t^1$};
\node[blue] at (4.45, 0.0) {$\Gamma_t^2$};
\node[cyan] at (2.3, -0.45) {$\Gamma_t^3$};
\end{tikzpicture}
\caption{Convergence result in dimension $n=2$ for $N=3$ nested fronts.}
\label{fig:main thm}
\end{figure}

\subsection{Gradient flow formulation}\label{sec:grad_flow}

The coupled system \eqref{eq:velocity-intro}  is formally the $L^2$-gradient flow of a nonlocal partition energy, which we now describe. For bounded measurable sets $A,B\subset \R^n$, denote by
\[
\mathcal{L}(A,B):=\int_A\int_B \frac{dx\,dy}{|x-y|^{n+2s}}
\]
the pairwise nonlocal interaction between $A$ and $B$. The fractional $2s$-perimeter of $\Omega$ is $\operatorname{Per}_{2s}(\Omega):=\mathcal{L}(\Omega,\Omega^c)$, see \cite{CRS}.

For each $i\in\{1,\ldots,N\}$, we associate to the set  $\Omega^i$ the energy
\begin{equation}\label{eq:single_front_energy_intro}
\mathcal{J}(\Omega^i) := \mathcal{L}\big(\Omega^i,(\Omega^i)^c\big) \;+\!\sum_{j<i}\!\mathcal{L}\big(\Omega^i,(\Omega^j)^c\big) \;+\!\sum_{j>i}\!\mathcal{L}\big((\Omega^i)^c,\Omega^j\big).
\end{equation}
The first term is the fractional perimeter of $\Omega^i$. The second sum accounts for the interactions of $\Omega^i$ with the exteriors of the larger phases ($j<i$), and the third for the interactions of the exterior of   $\Omega^i$ with the smaller phases it contains ($j>i$). The contributions appearing in $\mathcal{J}(\Omega^i)$ are illustrated in Figure~\ref{fig:interactions} for $i=3$ and $N=4$.

The total energy of the partition is then given by
\begin{equation}\label{eq:total_energy_intro}
\mathcal{E}_N(\Omega^1,\ldots,\Omega^N) := \tfrac{1}{2}\sum_{i=1}^N \mathcal{J}(\Omega^i) = \tfrac{1}{2}\sum_{i=1}^N \mathcal{L}\big(\Omega^i,(\Omega^i)^c\big) \;+\!\sum_{1\leq j<i\leq N}\!\mathcal{L}\big(\Omega^i,(\Omega^j)^c\big).
\end{equation}

\begin{figure}[h]
\centering
\begin{tikzpicture}[scale=0.72,
    black hatch/.style={pattern=north east lines, pattern color=black!70},
    red hatch/.style={pattern=horizontal lines, pattern color=red!70}]

\def\drawgammaone{plot[smooth cycle, tension=0.7] coordinates {
    (-3.1,0.3) (-2.5,2.0) (-0.8,2.9) (1.2,2.7) (2.8,1.8) (3.2,0.2) (2.7,-1.6) (1.0,-2.6) (-1.2,-2.5) (-2.6,-1.5)
}}
\def\drawgammatwo{plot[smooth cycle, tension=0.7] coordinates {
    (-2.1,0.2) (-1.7,1.4) (-0.4,2.0) (0.9,1.8) (1.9,1.1) (2.2,0.0) (1.8,-1.1) (0.6,-1.7) (-0.7,-1.7) (-1.8,-0.9)
}}
\def\drawgammathree{plot[smooth cycle, tension=0.7] coordinates {
    (-1.2,0.15) (-0.9,0.9) (-0.1,1.2) (0.7,1.05) (1.1,0.5) (1.15,-0.15) (0.9,-0.7) (0.2,-1.0) (-0.5,-0.9) (-1.0,-0.4)
}}
\def\drawgammafour{plot[smooth cycle, tension=0.7] coordinates {
    (-0.4,0.1) (-0.3,0.45) (0.05,0.55) (0.35,0.4) (0.45,0.1) (0.35,-0.2) (0.05,-0.35) (-0.3,-0.2)
}}

\def\dx{9}
\def\dy{-8.5}

\begin{scope}[shift={(0,0)}]
\begin{scope}
    \clip (-4.2,-3.5) rectangle (4.2,3.8);
    \fill[black hatch] (-4.2,-3.5) rectangle (4.2,3.8);
    \fill[white] \drawgammathree;
\end{scope}
\fill[red hatch] \drawgammathree;
\draw[thick] \drawgammaone;
\draw[thick] \drawgammatwo;
\draw[thick, red] \drawgammathree;
\draw[thick] \drawgammafour;
\node at (2.0, -1.7) {$\Omega^1$};
\node at (1.2, -1.2) {$\Omega^2$};
\node[red] at (0.5, -0.6) {\small$\Omega^3$};
\node at (0.0, 0.1) {\small$\Omega^4$};
\node[anchor=north, font=\large] at (0, -3.6) {$\mathcal{L}\big(\Omega^3,\, (\Omega^3)^c\big)$};
\end{scope}

\begin{scope}[shift={(\dx,0)}]
\begin{scope}
    \clip (-4.2,-3.5) rectangle (4.2,3.8);
    \fill[red hatch] (-4.2,-3.5) rectangle (4.2,3.8);
    \fill[white] \drawgammathree;
\end{scope}
\fill[black hatch] \drawgammafour;
\draw[thick] \drawgammaone;
\draw[thick] \drawgammatwo;
\draw[thick, red] \drawgammathree;
\draw[thick] \drawgammafour;
\node at (2.0, -1.7) {$\Omega^1$};
\node at (1.2, -1.2) {$\Omega^2$};
\node[red] at (0.5, -0.6) {\small$\Omega^3$};
\node at (0.0, 0.1) {\small$\Omega^4$};
\node[anchor=north, font=\large] at (0, -3.6) {$\mathcal{L}\big((\Omega^3)^c,\, \Omega^4\big)$};
\end{scope}

\begin{scope}[shift={(0,\dy)}]
\begin{scope}
    \clip (-4.2,-3.5) rectangle (4.2,3.8);
    \fill[black hatch] (-4.2,-3.5) rectangle (4.2,3.8);
    \fill[white] \drawgammatwo;
\end{scope}
\fill[red hatch] \drawgammathree;
\draw[thick] \drawgammaone;
\draw[thick] \drawgammatwo;
\draw[thick, red] \drawgammathree;
\draw[thick] \drawgammafour;
\node at (2.0, -1.7) {$\Omega^1$};
\node at (1.2, -1.2) {$\Omega^2$};
\node[red] at (0.5, -0.6) {\small$\Omega^3$};
\node at (0.0, 0.1) {\small$\Omega^4$};
\node[anchor=north, font=\large] at (0, -3.6) {$\mathcal{L}\big(\Omega^3,\, (\Omega^2)^c\big)$};
\end{scope}

\begin{scope}[shift={(\dx,\dy)}]
\begin{scope}
    \clip (-4.2,-3.5) rectangle (4.2,3.8);
    \fill[black hatch] (-4.2,-3.5) rectangle (4.2,3.8);
    \fill[white] \drawgammaone;
\end{scope}
\fill[red hatch] \drawgammathree;
\draw[thick] \drawgammaone;
\draw[thick] \drawgammatwo;
\draw[thick, red] \drawgammathree;
\draw[thick] \drawgammafour;
\node at (2.0, -1.7) {$\Omega^1$};
\node at (1.2, -1.2) {$\Omega^2$};
\node[red] at (0.5, -0.6) {\small$\Omega^3$};
\node at (0.0, 0.1) {\small$\Omega^4$};
\node[anchor=north, font=\large] at (0, -3.6) {$\mathcal{L}\big(\Omega^3,\, (\Omega^1)^c\big)$};
\end{scope}

\end{tikzpicture}
\caption{The four contributions to $\mathcal{J}(\Omega^3)$ for $N=4$, with the front $\Gamma^3$ shown in red. Top left: the fractional perimeter $\mathcal{L}(\Omega^3,(\Omega^3)^c)=\operatorname{Per}_{2s}(\Omega^3)$. Top right: the interaction $\mathcal{L}((\Omega^3)^c,\Omega^4)$ with the smaller phase $\Omega^4$ contained in $\Omega^3$ ($j>i$). Bottom: the interactions $\mathcal{L}(\Omega^3,(\Omega^2)^c)$ and $\mathcal{L}(\Omega^3,(\Omega^1)^c)$ with the exteriors of the larger phases $\Omega^2$ and $\Omega^1$ ($j<i$).}\label{fig:interactions} 
\end{figure}

\subsection{Examples of the interacting flow}

Two simple symmetric configurations reduce system
\eqref{eq:velocity-intro} to finite-dimensional systems of ODEs:
nested parallel half-spaces and nested concentric balls.
For parallel half-spaces, the resulting system is the one governing
one-dimensional dislocation dynamics associated with the fractional
Allen--Cahn equation, previously studied in \cite{GonzalezMonneau, DipierroFigalliValdinoci,DipierroPalatucciValdinoci}. Global existence and uniqueness, together with
preservation of the ordering, were established in
\cite{GonzalezMonneau,VanMeursPeletierPozar}.
For concentric balls, the evolution reduces to a system of ODEs for
their radii. This system admits a unique maximal solution; the spheres
do not collide, and the solution exists until the innermost ball
shrinks to a point, which occurs in finite time. The corresponding
computations are provided in Appendix~\ref{sec:examples}.

\subsection{Strategies and prior work}\label{sec:significance}
The classical Allen--Cahn problem provides the local point of comparison. Its stationary limit was studied by Modica and Mortola \cite{MM}. Chen proved the appearance of motion by mean curvature for the evolutionary problem \cite{Chen}, and Evans, Soner and Souganidis established a level-set convergence framework that continues beyond smooth evolution \cite{EvansSonerSouganidis}. For fractional mean-curvature flow of a single set, Imbert \cite{Imbert09} developed a level-set formulation, providing a weak notion of evolution analogous to level-set approaches for classical front propagation \cite{Barles-Souganidis,EvansSonerSouganidis,EvansSpruck}. For fractional diffusion, Imbert and Souganidis developed a phase-field approach \cite{Imbert}. In the critical regime $s=1/2$, the multiple-loop analysis of Patrizi and Vaughan \cite{PatriziVaughan2} concerns a different scaling and limiting behavior. For $s<1/2$, the earlier paper \cite{HasaniPatrizi} proved convergence to fractional mean-curvature motion for a single front. The present paper addresses the additional question left by that result: what motion emerges when several well-separated transition layers evolve in the same strongly nonlocal equation?

Related one-dimensional results make the role of interaction particularly clear. When $n=1$, the analogue of \eqref{eq:velocity-intro} governs interacting interface points; parallel half-spaces in higher dimensions lead to an ODE of the same form, up to a dimensional coefficient (see Appendix~\ref{sec:examples}). Gonz\'{a}lez and Monneau studied particle dynamics at $s=1/2$ \cite{GonzalezMonneau}, while strongly nonlocal and less singular fractional regimes were investigated in \cite{DipierroFigalliValdinoci,DipierroPalatucciValdinoci}; related interacting-particle systems are considered in \cite{VanMeursPeletierPozar}. The behavior of differently oriented layers and their collisions is considered in \cite{MeursPatrizi,patval1,patval4}; long-time dynamics is studied in \cite{CozziDavilaDelPino,patval3}, and continuum limits involving many layers in \cite{patsan,patsan2}. Unlike one-dimensional transition points, the fronts in this paper are hypersurfaces: each has its own fractional curvature, while all the others contribute through nonlocal integrals. Work on fractional curvature flows and related geometric evolutions includes \cite{CesaroniDeLucaNovagaPonsiglione,CesaroniDipierroNovagaVal,CesaroniNovaga,ChambolleMoriniNovagaPonsiglione,ChambolleMoriniPonsiglione,CintiSinestrariValdinoci2,CintiSinestrariValdinoci,SaezVal}.  

The proof constructs global subsolutions and supersolutions to \eqref{eq:pde} around a prescribed smooth solution of \eqref{eq:velocity-intro}, and applies the comparison principle. This follows the barrier strategy developed for front propagation in \cite{Barles-DaLio,Barles-Souganidis} and its nonlocal extensions \cite{Imbert09,Imbert}. The single-front analysis \cite{HasaniPatrizi} supplies important ingredients, including the strongly nonlocal curvature approximation and estimates for the phase transition. Several new terms appear when fronts are superposed. Near a given boundary, the other layers generate an order-one nonlocal interaction; this is precisely the interaction term in \eqref{eq:velocity-intro}. The periodicity of $W$ permits an expansion around the layer at that boundary, but the resulting error must be controlled uniformly in the presence of all the other layers.

To achieve this control, we add a corrector near each front, governed by the operator obtained by linearizing the one-dimensional layer equation. The forcing of that corrector contains both the curvature contribution and the interactions with the other fronts. We also use a smooth extension of signed distance and auxiliary cutoffs to control the error away from every front, where an unmodified distance function need not be smooth. These features distinguish the construction from the critical-regime barriers in \cite{PatriziVaughan2} and extend the single-front strong-nonlocal construction in \cite{HasaniPatrizi}. We use the known properties of the one-dimensional profile and corrector \cite{DipierroFigalliValdinoci,MonneauPatrizi2}, together with estimates for fractional operators and comparison principles \cite{UsersGuide,JakobsenKarlsen,PatriziVaughan}. Section 3 explains the heuristic derivation  of the coupled velocity and corrector equation; Sections 5--7 turn that calculation into the convergence proof.

\subsection{Organization of the paper}

Section 2 recalls fractional mean curvature, expresses the coupled flow through signed distance functions, and constructs their smooth extensions. Section 3 derives the geometric law and the corrector equation formally. Section 4 collects facts about the fractional operator and the Allen--Cahn evolution. Section 5 introduces the phase transition, auxiliary functions and correctors, and states the estimates needed in the proof. Section 6 constructs the barriers, and Section 7 proves Theorem~\ref{thm:main_result} by comparison. Sections 8 and 9 prove the interaction and derivative estimates used in Section 5. Appendix A analyzes the parallel half-space and concentric-ball examples.

\subsection{Notations}
Throughout the paper, we denote by $C>0$ any  constant independent of $\ep$ and the parameters $\delta$, $\sigma$, and $R$,  which will be introduced later. 

We write $B(x_0,r)$ and $\overline{B}(x_0,r)$ for the open and closed balls of radius $r>0$ centered at $x_0\in\R^n$, respectively, and $S^{n-1}$ for the unit sphere in $\R^{n}$. 

For $\beta \in (0,1]$,  $k \in \N \cup \{0\}$ and $m\in\N$, we denote by $C^{k,\beta}(\R^m)$ the usual class of functions with bounded $C^{k,\beta}$ norm over $\R^m$. 
For $\beta=0$ we simply write $C^{k}(\R^m)$.
For multi-variable functions $v(\xi;t,x)$, we write $v \in C_\xi^{k,\beta}(\R)$ if $v(\cdot;t,x) \in C^{k,\beta}(\R)$ for all $t,x$ in the domain of $v$. 
Moreover, we use the dot notation for derivatives with respect to the variable $\xi$, namely $\dot{v}(\xi;t,x) = v_\xi(\xi;t,x)$.

Given a function $\eta = \eta(t,x)$, defined on a set $A$,  we write $\eta = O(\ep)$ if there is $C>0$ such that
$|\eta(t,x)| \leq C \ep$ for all $(t,x)\in A$, and we write $\eta = o_\ep(1)$ if  $\lim_{\ep \to 0} \eta(t,x) = 0$, uniformly in $(t,x)\in A$. 


For a set $A$, we denote by $\one_A$ the characteristic function of the set $A$. 

\section{Motion by fractional mean curvature}\label{sec:FMC}

In this section, we recall properties of fractional mean curvature, express the coupled front evolution in terms of signed distance functions, and describe smooth extensions of these functions away from the fronts.

\subsection{The fractional mean curvature} 

Let $\Omega$ be a smooth bounded subset of $\R^n$. For a point $x\in \partial \Omega$, the fractional mean curvature of order $2s$ of $ \Omega$ at $x$ is defined by
$$H_{2s}(\Omega)(x) =P.V.\int_{\R^n}\frac{\one_\Omega(z)-\one_{\Omega^c}(z)}{|z-x|^{n+2s}}\,dz,$$
where P.V.  denotes the Cauchy principal value. This quantity can also be expressed in terms of the signed  distance function $d$ to $\Omega$.
Indeed, since $$\Omega=\{z\,:\, d(z)>0\}$$ and using that 
$$P.V. \int_{\R^n}\frac{\one_{\{\nabla d(x)\cdot z>0\}}-\one_{\{\nabla d(x)\cdot z<0\}}}{|z|^{n+2s}}\,dz=0,$$ we can write
\begin{align*}
H_{2s}(\Omega)(x) &=P.V.\int_{\R^n}\frac{\one_\Omega(x+z)-\one_{\Omega^c}(x+z)}{|z|^{n+2s}}\,dz\\&
=P.V. \int_{\R^n}\frac{\one_{\{d(x+z)>0\}}-\one_{\{d(x+z)<0\}}+\one_{\{\nabla d(x)\cdot z<0\}}-\one_{\{\nabla d(x)\cdot z>0\}}}{|z|^{n+2s}}\,dz\\& 
=2 \int_{\{d(x+z)>0,\, \nabla d(x)\cdot z<0\}}\frac{dz}{|z|^{n+2s}}-2 \int_{\{d(x+z)<0,\, \nabla d(x)\cdot z>0\}}\frac{dz}{|z|^{n+2s}},
\end{align*}
where the last two integrals converge in the standard sense, as stated in Proposition \ref{prop:fmc_finite_result} below.
Assume $d$ is smooth in $Q_{2\rho}:=\{z\,:\,|d(z)|<2\rho\}$ for some $\rho>0$, then for $x\in Q_\rho$, define
\begin{equation}\label{kappa+de}\kappa^+[x,d]:=\int_{\{d(x+z)>d(x),\, \nabla d(x)\cdot z<0\}}\frac{dz}{|z|^{n+2s}},\quad 
\kappa^-[x,d] :=\int_{\{d(x+z)<d(x),\, \nabla d(x)\cdot z>0\}}\frac{dz}{|z|^{n+2s}},\end{equation}
and 
\begin{equation}\label{kappade}\kappa[x,d]:=\kappa^+[x,d]-\kappa^-[x,d].\end{equation}
From the discussion above, we obtain the identity
$$\kappa[x,d]=\frac12 H_{2s}(\{d>d(x)\})(x).$$ 
Roughly speaking, $\kappa[\cdot,d]$ plays the role of  $\Delta d$ in the local setting. 

A proof of the following result can be found, for instance, in \cite[Lemma 7.3]{PatriziVaughan}. 
 \begin{prop}\label{prop:fmc_finite_result}
    Assume $d$ of class $C^2(Q_{2\rho})$, then for all $x\in Q_\rho$,  the  quantities  $ \kappa^+[x,d] $ and $\kappa^-[x,d]$ are finite. 
\end{prop}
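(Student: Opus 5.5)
The plan is to show that both integrals converge near $z=0$, where the region of integration is a thin cusp-like set, and at infinity, where integrability comes from the kernel decay. Fix $x\in Q_\rho$, set $\nu=\nabla d(x)$, and split each integral over $\{|z|\le r\}$ and $\{|z|>r\}$ for a small $r\in(0,\rho)$ chosen independently of $x$. On $\{|z|>r\}$ both $\kappa^\pm$ are bounded by
\[
\int_{|z|>r}\frac{dz}{|z|^{n+2s}}=\frac{|S^{n-1}|}{2s}\,r^{-2s},
\]
so only the contribution near the origin needs work.

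Near the origin I would use the $C^2$ regularity of $d$. For $|z|\le r$ we have $x+z\in Q_{2\rho}$, because $d$ is $1$-Lipschitz and $|d(x)|<\rho$. Taylor's formula then gives
\[
d(x+z)=d(x)+\nu\cdot z+E(z),\qquad |E(z)|\le M|z|^2,
\]
with $M=\sup_{Q_{2\rho}}|D^2d|$. I am assuming that $\|d\|_{C^2(Q_{2\rho})}<\infty$, as implicit in "$d$ of class $C^2(Q_{2\rho})$". If $M$ is only local, I would shrink $r$ to stay in a compact neighborhood.

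Consider the set in $\kappa^+$, namely $\{d(x+z)>d(x),\ \nu\cdot z<0\}$. On it we get $0<-\nu\cdot z<E(z)\le M|z|^2$, so it is contained in the slab $\{|\nu\cdot z|\le M|z|^2\}$. The set in $\kappa^-$ is handled the same way. Since $|\nabla d|=1$ in $Q_{2\rho}$, where $d$ is a smooth signed distance, $\nu$ is a unit vector. This step is the heart of the argument: identifying that the integration set lies in a paraboloidal neighborhood of the tangent hyperplane.

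Then I would estimate in polar coordinates $z=\tau\theta$ with $\theta\in S^{n-1}$. The condition becomes $|\nu\cdot\theta|\le M\tau$. The spherical measure of $\{\theta\in S^{n-1}:|\nu\cdot\theta|\le M\tau\}$ is at most $C_nM\tau$. Hence
\[
\int_{\{|z|\le r,\ |\nu\cdot z|\le M|z|^2\}}\frac{dz}{|z|^{n+2s}}\le \int_0^r C_nM\tau\,\tau^{n-1}\tau^{-n-2s}\,d\tau=C_nM\int_0^r\tau^{-2s}\,d\tau=\frac{C_nM}{1-2s}\,r^{1-2s},
\]
which is finite because $2s<1$. (For $s\ge\frac12$ this integral would diverge, so the estimate genuinely uses the regime $s<\frac12$.)

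Combining the two pieces gives $\kappa^\pm[x,d]\le C(n,s)\big(M r^{1-2s}+r^{-2s}\big)<\infty$, uniformly for $x\in Q_\rho$. The only delicate point is the spherical-cap measure bound, which is elementary. There is no principal-value cancellation to handle, because the sets were already restricted by the sign conditions.
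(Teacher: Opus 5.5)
Your proof is correct and complete. It uses the Lipschitz bound to keep the segment $[x,x+z]$ inside $Q_{2\rho}$, so the Taylor expansion applies there. It then shows that each integration set lies in the quadratic cusp $\{|\nabla d(x)\cdot z|\le M|z|^2\}$, whose contribution is bounded by $\int_0^r\tau^{-2s}\,d\tau<\infty$ for $2s<1$, and controls $|z|>r$ by the kernel decay. The paper gives no proof of its own and defers to \cite[Lemma 7.3]{PatriziVaughan}; your argument is the standard one for this fact.
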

The fractional mean curvature of balls can be explicitly computed, see \cite[Lemma 2]{SaezVal} for a proof.
\begin{prop}\label{ballFMC}For $r>0$, let $d(x)=r-|x|$. Then, for $x\neq 0$, 
$$\kappa[x,d]=-\frac{\omega}{|x|^{2s}},$$
for some $\omega>0$.
\end{prop}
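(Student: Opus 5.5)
The statement to prove is Proposition~\ref{ballFMC}: the fractional mean curvature of the ball $\{d>d(x)\}=B(0,|x|)$ at a point of its boundary. Since $\kappa[x,d]=\frac12 H_{2s}(\{d>d(x)\})(x)$, it suffices to compute $H_{2s}(B(0,\rho))(x)$ with $\rho=|x|$. The computation rests on two ingredients, rotation invariance and scaling.

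\emph{Reduction by invariance.} The kernel $|z|^{-n-2s}$ is invariant under rotations, so $H_{2s}(B(0,\rho))(x)$ depends only on $\rho$. Hence we may take $x=\rho e_n$. Next we change variables $z=\rho w$ in
\[
\mathrm{P.V.}\int_{\R^n}\frac{\one_{B(0,\rho)}(z)-\one_{B(0,\rho)^c}(z)}{|z-x|^{n+2s}}\,dz .
\]
This gives
\[
H_{2s}(B(0,\rho))(\rho e_n)=\rho^{-2s}H_{2s}(B(0,1))(e_n).
\]
The principal value is compatible with the change of variables, since the excised balls scale accordingly. Therefore $\kappa[x,d]=-\omega|x|^{-2s}$ with
\[
\omega:=-\tfrac12 H_{2s}(B(0,1))(e_n).
\]
By Proposition~\ref{prop:fmc_finite_result}, $\omega$ is a finite real number, because $d$ is smooth away from the origin.

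\emph{Sign of $\omega$.} It remains to show that $\omega>0$. We use the representation derived above:
\[
H_{2s}(B)(e_n)=2\int_{\{z\in B,\ z_n>1\}}\ \ -\ \ 2\int_{\{z\notin B,\ z_n<1\}},
\]
where both integrands are $|z-e_n|^{-n-2s}$ and $B=B(0,1)$. The first set is empty, because the unit ball lies entirely in the half-space $\{z_n<1\}$. The second set, $\{z_n<1\}\setminus \overline B$, has positive measure, so its integral is strictly positive. Hence $H_{2s}(B)(e_n)<0$, which gives $\omega>0$.

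The only point requiring care is the sign step. There one needs the rewriting of $H_{2s}$ by subtracting the half-space term, which converges absolutely as stated in Proposition~\ref{prop:fmc_finite_result}. With that rewriting the convexity of the ball gives the sign immediately, so no serious obstacle is expected.
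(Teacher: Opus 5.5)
Your proof is correct. The paper gives no argument of its own here and only cites Lemma 2 of S\'aez--Valdinoci. Your steps supply the standard self-contained proof of that fact: rotation invariance and scaling give the $|x|^{-2s}$ dependence, and the half-space comparison gives the sign. In that comparison, $\kappa^+$ vanishes because $B(0,1)\subset\{z_n<1\}$, which leaves $\kappa^->0$, finite by Proposition~\ref{prop:fmc_finite_result}. As a small bonus, your argument identifies $\omega=\int_{\{z_n<1\}\setminus\overline{B(0,1)}}|z-e_n|^{-n-2s}\,dz$ explicitly.
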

The following neighborhood regularity is a consequence of
\cite[Lemma 2.1]{CiraoloFigalliMaggiNovaga}, applied uniformly to the
parallel hypersurfaces $\{d=r\}$, and of the first-variation
formula for the fractional mean curvature in
\cite[Proposition B.2]{DavilaDelPinoWei}.
\begin{prop}\label{lem:kappa-neighborhood}
Let $\Omega\subset\mathbb R^n$ be a bounded open set with 
$C^{2,\alpha}$ boundary, for some $\alpha>2s$, and let $d$ be the signed distance to $\Omega$. Then there
exists $\rho>0$ such that the function
$$\kappa[x,d]=\frac12 H_{2s}(\{d>d(x)\})(x)$$ 
belongs to $C^1(Q_\rho)$.
\end{prop}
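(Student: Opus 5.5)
The plan is to reduce the claim to uniform estimates on the parallel hypersurfaces $\Gamma_r:=\{d=r\}$, $|r|<\rho$, and to combine them with the first-variation formula for $H_{2s}$ in the normal direction.

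\emph{Step 1: uniform regularity of the parallel surfaces.} Since $\partial\Omega$ is compact and $C^{2,\alpha}$, it satisfies a uniform interior and exterior ball condition with some radius $r_0$. Hence $d$ is $C^{2,\alpha}$ in $Q_{2\rho}$ for any $\rho<r_0/2$ (the Gilbarg--Trudinger tubular-neighborhood lemma). For $|r|<\rho$, each level set $\Gamma_r$ is the boundary of $\{d>r\}$ and is a $C^{2,\alpha}$ hypersurface. Moreover, \cite[Lemma 2.1]{CiraoloFigalliMaggiNovaga} gives uniform control: in a fixed-size cylinder around each point, $\Gamma_r$ is a graph over its tangent plane with $C^{2,\alpha}$ norm bounded independently of $r$ and of the base point.

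\emph{Step 2: continuity and tangential derivatives.} Write $\kappa[x,d]=\frac12 H_{2s}(E_{d(x)})(x)$ with $E_r=\{d>r\}$. I would split the singular integral defining $H_{2s}$ into two pieces. Outside a ball $B(x,\delta)$ the integrand is bounded and depends smoothly on $x$ and $r$ through the sets $E_r$, which move smoothly. Inside $B(x,\delta)$ I use the graph representation $u_{x,r}$ of $\Gamma_r$. The standard formula
\[
H_{2s}=\int_{B'_\delta}\frac{G\!\left(\frac{u(y')-u(0)-\nabla u(0)\cdot y'}{|y'|}\right)}{|y'|^{n-1+2s}}\,dy'+\text{smooth},
\]
with $G$ an odd smooth function, converges because the numerator is $O(|y'|^{1+\alpha})$ and $\alpha>2s$. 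Differentiating in tangential directions, or equivalently with respect to the base point, produces an integrand of size $|y'|^{\alpha-1}\cdot|y'|^{-(n-1+2s)}\cdot|y'|$, using $C^{2,\alpha}$ control of $u$ and the condition $\alpha>2s$. This is integrable, uniformly in $r$, by Step 1. So $\kappa$ is $C^1$ along each $\Gamma_r$, with derivatives continuous in $(x,r)$.

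\emph{Step 3: the normal derivative.} Moving $x$ along $\nabla d(x)$ changes $r=d(x)$ while keeping the geometry of a normal variation: $\Gamma_{r+t}$ is the normal graph of the constant function $t$ over $\Gamma_r$. By \cite[Proposition B.2]{DavilaDelPinoWei}, the first variation of $H_{2s}$ under a normal perturbation $h\nu$ is
\[
2\,\mathrm{P.V.}\int_{\Gamma_r}\frac{h(y)-h(x)}{|x-y|^{n+2s}}\,dy+2h(x)\int_{\Gamma_r}\frac{1-\nu(x)\cdot\nu(y)}{|x-y|^{n+2s}}\,dy.
\]
For $h\equiv 1$ only the second term survives. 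Its integrand is $O(|x-y|^{2-n-2s})$, which is integrable on the $(n-1)$-dimensional surface, uniformly in $r$, and depends continuously on $(x,r)$ by Step 1. Hence $\partial_\nu\kappa$ exists and is continuous on $Q_\rho$. Together with Step 2 this gives $\kappa\in C^1(Q_\rho)$.

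The main obstacle is Step 1, specifically making the $C^{2,\alpha}$ bounds and the validity of the variation formula hold \emph{uniformly} over the family $\Gamma_r$, rather than for a single surface. The conclusion also needs $\rho$ smaller than the reach of $\partial\Omega$. This uniformity is exactly what the cited lemma supplies.
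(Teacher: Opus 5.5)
Your proposal is correct and takes essentially the paper's route. The paper gives no detailed proof: it cites \cite[Lemma 2.1]{CiraoloFigalliMaggiNovaga}, applied uniformly to the parallel hypersurfaces $\{d=r\}$, for the tangential regularity, and the first-variation formula \cite[Proposition B.2]{DavilaDelPinoWei} with a constant normal perturbation for the derivative along $\nabla d$, which is exactly your Steps 1--3 with the tangential estimate (where $\alpha>2s$ enters) written out; the sign of the $h\equiv 1$ term depends on orientation, since $\nabla d$ is the inner normal, and this is irrelevant for $C^1$ regularity.
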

\subsection{Evolution of the  signed distance functions}
Let  $(\Omega_t^i)_{i=1}^N$, $t \in [0, T]$, be a family of sets with smooth boundaries 
$\Gamma^i_t=\partial\Omega_t^i $. For each $i=1,\ldots,N$, let 
$d_i(t,x)$ denote the signed distance function to $\Omega_t^i$, defined by 
\begin{align*}
d_i(t, x) =
\begin{cases}
d(x, \Gamma^i_t) & \text{for } x\in \Omega_t^i \\
-d(x, \Gamma^i_t) & \text{for } x\in (\Omega_t^i)^c.
\end{cases}
\end{align*}
It is well known that $d_i$ is smooth in a tubular neighborhood of $\Gamma_i^t$, for every $t\in[0,T]$. 

Suppose the sets $(\Omega_t^i)_{i=1}^N$   satisfying the nesting condition $\Omega_{t}^{i+1} \subset \subset  \Omega_t^{i}$, 
evolve with outward normal velocity $v_i$, given by
\begin{equation*}
v_i(t,x)=c_0\left(\frac{1}{2} H_{2s}(\Omega^i_t)(x)-\sum_{j=1}^{i-1} \int_{(\overline{\Omega_t^j})^c} \frac{dz}{|z-x|^{n+2s}}  +\sum_{j=i+1}^{N} \int_{\Omega^j_t} \frac{dz}{|z-x|^{n+2s}}+\sigma\right),
\end{equation*}
for $i=1,\ldots, N$, $t\in[0,T]$, $ x\in \Gamma_t^i$, and 
some $\sigma = \sigma(t,x)$.  Then, since 
$$\Omega_t^i=\{x\in\R^n\,:\,d_i(t,x)>0\},\qquad (\overline{\Omega_t^i})^c=\{x\in\R^n\,:\,d_i(t,x)<0\},$$
$$\partial_t d_i(t,x) = v_i(t,x),\qquad \kappa[x,d_i(t,\cdot)]=\frac12 H_{2s}(\{d_i>d_i(t,x)\})(x),\qquad x\in \Gamma_t^i,$$
and by the nesting assumption,
\[
d_j(t,x)>0 \ \text{for } j<i,
\qquad
d_j(t,x)<0 \ \text{for } j>i,
\qquad x\in\Gamma_t^i,
\]
it follows that the signed distance functions $d_i$ satisfy, for $i=1,\ldots, N$,   $t\in[0,T]$, and  $ x\in \Gamma_t^i$,
\begin{equation}\label{eq:fmc_interaction}
\partial_t d_i = c_0\Bigg(\kappa\!\big[x, d_i(t,\cdot)\big] -\sum_{j \neq i}\operatorname{sgn}(d_j(t,x)) \int_{\{d_j(t,z) \operatorname{sgn}(d_j(t,x)) < 0\}} \frac{dz}{|z-x|^{n+2s}} +\sigma\Bigg).
\end{equation}

\subsection{Smooth extension of the signed distance function}

Recall that the signed distance function $d = d(t,x)$ associated with a front $\Gamma_t$  is smooth in some neighborhood of the front, provided $\Gamma_t$ is smooth. 
However, in general, $d$ is not smooth away from the front. 
Throughout the paper, we shall  use the following smooth extension of the distance function away from $\Gamma_t$.

\begin{defn}[Extension of the signed distance function]\label{defn:extension}
For $t\in [0,T]$, let $\tilde{d}$ be the signed distance function from a bounded domain  $\Omega_t$ with boundary  $\Gamma_t$  and let   $\rho>0$ be such that $\tilde{d}(t,x)$  is smooth in 
 \[
Q_{3\rho} := \{(t,x)\in [0,T] \times \R^n: |\tilde{d}(t,x)| < 3\rho\}.
\]
 Since the distance function $\tilde d$ is globally Lipschitz in $(t,x)$, we may choose a smooth approximation $\tilde d_\tau$ of $\tilde d$,  depending on a parameter $\tau>0$, such that 
\begin{equation*}
\tilde d_\tau=\tilde d+o_\tau(1),\qquad |\partial_t \tilde d_\tau|,\, |\nabla \tilde d_\tau|\le C, \qquad |D^2 \tilde d_\tau|=O(\tau^{-1}). 
\end{equation*} 
Let $\eta(t, x)$ be a smooth, bounded function such that 
\[
\eta = 1~\hbox{in}~\{|\tilde{d}|\leq 2\rho\}, \qquad \eta = 0~\hbox{in}~\{|\tilde{d}|\geq 3\rho\},\qquad 0 \leq \eta \leq 1.
\]
We extend $\tilde{d}(t,x)$ on the set $\{(t,x)\in [0,T] \times \R^n: |\tilde{d}(t,x)|\geq 2\rho\}$ to  the smooth  function $d(t,x)$ given by
\[
d(t,x) 
:= \begin{cases}
\tilde{d}(t,x) & \hbox{in}~Q_\rho=\{|\tilde{d}(t,x)|<2 \rho\}\\
\tilde{d}(t,x) \eta(t,x) + \tilde d_\tau(t,x)(1-\eta(t,x)) & \hbox{in}~\{2\rho \leq  |\tilde{d}(t,x)| \leq 3 \rho\}\\
\tilde d_\tau(t,x) &\hbox{in}~\{|\tilde{d}(t,x)|> 3\rho\}.
\end{cases}
\]
Notice that, 
\begin{equation}\label{approxdtau}
d=\tilde d+o_\tau(1),\qquad |\partial_t d|,\, |\nabla d|\le C, \qquad |D^2 d|=O(\tau^{-1}). 
\end{equation} 
\end{defn} 
\begin{rem}\label{Kdextensionrem}
By the definition of $ d$, for $\tau$ sufficiently small  and  $(t,x)\in Q_\rho$, we have
 $$\partial_t d (t,x)=\partial_t\tilde d (t,x),\quad\nabla d (t,x)=\nabla \tilde d(t,x),$$
 $$\{z\,:\, d(t, x+z)> d(t, x)\}=\{z\,:\,\tilde d(t, x+z)>\tilde d(t, x)\},$$
 $$\{z\,:\, d(t, x+z)< d(t, x)\}=\{z\,:\,\tilde d(t, x+z)<\tilde d(t, x)\}.$$
Consequently,  for $(t,x)\in Q_\rho$,
$$\kappa[x, d(t,\cdot)]=\kappa[x, \tilde  d(t,\cdot)].$$
\end{rem} 

\section{Heuristics}\label{sec:Heuristics}

Here, we give two formal computations relating to Theorem \ref{thm:main_result} and its proof.
We use the notation $\simeq$ to denote equality up to adding terms that vanish as $\ep \to 0$.

\subsection{Derivation of the flow \eqref{eq:fmc_interaction}}\label{sec:heuristicspart1}
For simplicity, we restrict our discussion to the case $N=2$. 
For the following formal computations, assume that the signed distance functions $d_1(t,x)$ and $d_2(t,x)$ associated to $\Omega_t^1$ and $\Omega_t^2$ are smooth and satisfy $\abs{\nabla d_1} = \abs{\nabla d_2} = 1$.
Moreover, we assume that there is a positive, uniform distance $2\ell_0$ between $\Gamma_t^1$ and $\Gamma_t^2$.

Consider the following ansatz for the solution of \eqref{eq:pde}-\eqref{initial_data}
\begin{equation} \label{eq:ansatz1}
u^{\ep}(t,x) \simeq \phi\( \frac{d_1(t,x)}{\ep}\) + \phi\( \frac{d_2(t,x)}{\ep}\),
\end{equation}
with $\phi$ the solution of \eqref{eq:standing wave}.
Plugging the ansatz into \eqref{eq:pde}, the left-hand side becomes
\begin{equation}\label{eq:antatz1-left}
\ep\partial_t u^{\ep}
 \simeq \dot{\phi}\(\frac{d_1}{\ep}\) \partial_t d_1 + \dot{\phi}\(\frac{d_2}{\ep}\) \partial_t d_2.
\end{equation}
On the other hand, we use the equation for $\phi$ in \eqref{eq:standing wave} to compute the fractional Laplacian of the ansatz:
\begin{equation}\label{eq:antatz1-right}
\begin{aligned}
 \mathcal{I}^s _n [u^{\ep}]
 &\simeq \mathcal{I}^s _n\bigg[\phi\(\frac{d_1}{\ep}\)\bigg] + \mathcal{I}^s _n\bigg[\phi\(\frac{d_2}{\ep}\)\bigg] \\
 &= \( \mathcal{I}^s_n\bigg[\phi\(\frac{d_1}{\ep}\)\bigg]  - \frac{C_{n,s}}{\ep^{2s}}\mathcal{I}^s_1[\phi]\(\frac{d_1}{\ep}\)  \)  +\frac{C_{n,s}}{\ep^{2s}}\mathcal{I}^{s}_1[\phi]\(\frac{d_1}{\ep}\)  + \mathcal{I}^s _n\bigg[\phi\(\frac{d_2}{\ep}\)\bigg]\\
 & =\bar a_{\ep}[d_1] +\frac{1}{\ep^{2s}}W'\(\phi\( \frac{d_1}{\ep}\)\) + P_\ep[d_2],
\end{aligned}
\end{equation}
where we denote 
\begin{equation}\label{oldaep}
\bar a_{\ep}[d](t,x):=\mathcal{I}^s_n\bigg[\phi\(\frac{d(t,\cdot)}{\ep}\)\bigg] (x) - \frac{C_{n,s}}{\ep^{2s}}\mathcal{I}^s_1[\phi]\(\frac{d(t,x)}{\ep}\),
\end{equation}
with $C_{n,s}>0$ given in \eqref{eq:Cns}, and 
\begin{equation}\label{eq:P_ep_heuristic}
P_\ep[d](t,x):= \mathcal{I}_n^s\left[\phi\left(\frac{d(t,\cdot)}{\ep}\right)\right](x).
\end{equation}
By Lemma \ref{lem:1 to n facrional Laplacian}, applied to $v=\phi$ with $e=\nabla d_1(t,x)$, we have
\begin{equation*}\frac{C_{n,s}}{\ep^{2s}}\mathcal{I}^s_1[\phi]\(\frac{d_1(t,x)}{\ep}\)=\frac{1}{\ep^{2s}}\int_{\R^n}\(\phi\(\frac{d_1(t,x)}{\ep}+\nabla d_1(t,x)\cdot z\)-\phi\(\frac{d_1(t,x)}{\ep}\)\)\frac{dz}{|z|^{n+2s}}.\end{equation*}
Hence, since
\begin{equation*}\mathcal{I}^s_n\bigg[\phi\(\frac{d_1(t,\cdot)}{\ep}\)\bigg] (x)=\int_{\R^n} \(\phi\(\frac{d_1(t,x+ z)}{\ep}\)-\phi\(\frac{d_1(t,x)}{\ep}\)\)\frac{dz}{|z|^{n+2s}},\end{equation*}
after a change of variables, we can write $\bar a_\ep[d_1]$ as follows
\begin{align*}\bar a_\ep[d_1](t,x)=\int_{\R^n} \(\phi\(\frac{d_1(t,x+ z)}{\ep}\)-\phi\(\frac{ d_1(t,x) + \nabla d_1(t,x)\cdot z}{\ep}\)\)\frac{dz}{|z|^{n+2s}}.
\end{align*}
Now freeze a point $(t,x)$ such that $x$ is near the  first front $\Gamma_t^1$, and let $\xi := d_1(t,x)/\ep$. Since $d_1$ grows linearly away from $\Gamma_t^1$, we can assume, at least formally, separation of scales. That is, assume that $\xi \in \R$ and $(t,x)$ are independent variables. Furthermore, since the two interfaces are separated by a distance of at least $2\ell_0$, we have
 $|d_2(t,x)| \geq \ell_0$.
By \cite[Lemma 5.2]{HasaniPatrizi} (see Lemma \ref{thm: b_ep fmc} for a precise statement), it follows that
\begin{equation}\label{aepconvergenceoldthm}\bar a_\ep[d_1](t,x)\simeq \kappa[x,d_1(t,\cdot)].
\end{equation}
Moreover, by Lemma \ref{lem:interaction asymptotics}, 
\begin{equation}\label{PepsimP_heur}
P_\ep[d_2](t,x) \simeq P[d_2](t,x), 
\end{equation}
where
\begin{equation}\label{def:P}
P[d](t,x) := -\operatorname{sgn}(d(t,x)) \int_{\{d(t,z) \operatorname{sgn}(d(t,x)) < 0\}} \frac{dz}{|z-x|^{n+2s}}.
\end{equation}
Using these results and assuming separation of scales, we can now complete our formal derivation. Since the ansatz $u^{\ep}$ approximates a solution of \eqref{eq:pde}, we can multiply the equation by $\dot{\phi}(\xi)$ and integrate over $\xi \in \R$, obtaining
\begin{equation}\label{eq:freeze}
\int_{\R} \ep \partial_t u^{\ep} \dot{\phi}(\xi) \, d \xi \simeq \int_{\R} \( \mathcal{I}^s_n[ u^{\ep}]  - \frac{1}{\ep^{2s} } W'(u^\ep) \) \dot{\phi}(\xi) \, d \xi.
\end{equation}
For convenience, we will consider the left and right-hand sides separately again.

First, the left-hand side of \eqref{eq:freeze} with \eqref{eq:antatz1-left} gives
\begin{align*}
\int_{\R} \ep \partial_t u^{\ep}(t,x)\, \dot{\phi}(\xi) \, d \xi
 &\simeq \partial_t d_1(t,x) \int_{\R} [\dot{\phi}(\xi)]^2  \, d \xi + \partial_t d_2(t,x) \int_{\R} \dot{\phi}\(\frac{d_2(t,x)}{\ep}\) \dot{\phi}(\xi) \, d \xi\\
 &\simeq c_0^{-1}\partial_t d_1(t,x)+ O\(\frac{\ep^{2s+1}}{\ell_0^{2s+1}}\)\int_{\R}  \dot{\phi}(\xi) \, d \xi\\
 &\simeq  c_0^{-1}\partial_t d_1(t,x),
\end{align*}
where we used the asymptotics on $\dot{\phi}$ in  \eqref{eq:asymptotics for phi dot} and that $|d_2(t,x) | \geq \ell_0$,  and where
\begin{equation}\label{def:c0}
c_0^{-1} := \int_\R [\dot{\phi}(\xi)]^2 \, d \xi.
\end{equation}
Next,  we look at the right-hand side of \eqref{eq:freeze}. 
Denote by $\tilde\phi:=\phi-H$, where $H$ is the Heaviside function. 
Using the periodicity of $W$ and performing a Taylor expansion around $\phi\(\frac{d_1}{\ep}\)$, we get
\begin{equation*}\label{eq:nonlinearity_heuristic}
\begin{split}
\frac{1}{\ep^{2s}}W'(u^\ep)&= \frac{1}{\ep^{2s}}W'\(\phi\(\frac{d_1}{\ep}\)+\tilde\phi\(\frac{d_2}{\ep}\)\)\\&
\simeq \frac{1}{\ep^{2s}}W'\(\phi\(\frac{d_1}{\ep}\)\) +\frac{1}{\ep^{2s}}W''\(\phi\(\frac{d_1}{\ep}\)\)\tilde\phi\(\frac{d_2}{\ep}\).
\end{split}
\end{equation*}
Substituting this expression into the righ-hand side of \eqref{eq:freeze} and using \eqref{eq:antatz1-right}, the term $\frac{1}{\ep^{2s}}W'\(\phi\(\frac{d_1}{\ep}\)\)$ cancels,  and we obtain 
\begin{align*}
\int_{\R}  \left[\mathcal{I}^s_n [u^{\ep}(t,\cdot)](x) - \frac{1}{\ep^{2s} }W'(u^{\ep}(t,x))\right] \dot{\phi}(\xi) \, d \xi &\simeq \int_{\R} \left( \bar a_{\ep}[d_1](t,x) + P_\ep[d_2](t,x) \right) \dot{\phi}(\xi)\, d \xi\\&
 +\frac{1}{\ep^{2s}}\int_{\R}\tilde\phi\(\frac{d_2(t,x) }{\ep}\)W''(\phi(\xi))\dot{\phi}(\xi) \, d \xi.
\end{align*}
Using the facts that $\phi( -\infty) = 0$,  $\phi(\infty)=1$, $W'(0)=W'(1)$,  $|d_2(t,x)|\geq\ell_0$, 
and the asymptotic behavior of $\phi$ given in \eqref{eq:asymptotics for phi}, we obtain
\begin{align*}
    \frac{1}{\ep^{2s}}  \int_{\R}\tilde\phi\(\frac{d_2(t,x) }{\ep}\)W''(\phi(\xi))\dot{\phi}(\xi) \, d \xi = \frac{1}{\ep^{2s}}O\left(\frac{\ep^{2s}}{\rho^{2s}}\right)  \int_{\R} \frac{d}{d\xi} \left[W'(\phi(\xi)) \right] \, d \xi = 0.
\end{align*}

\noindent
Combining the above estimates, \eqref{eq:freeze} for the ansatz gives
\begin{align*}
c_0^{-1}\partial_t d_1(t,x) \simeq  \bar{a}_\ep[d_1](t,x) + P_\ep[d_2](t,x).
\end{align*}
Using \eqref{aepconvergenceoldthm} and \eqref{PepsimP_heur}, we then obtain
\begin{align*}
\partial_t d_1(t,x) \simeq c_0 \left(\kappa[x,d_1] + P[d_2](t,x)\right) \quad  \text{near}~\Gamma_t^1.
\end{align*}
The computation for $x$ near $\Gamma_t^2$ is similar.
We therefore conclude that each interface evolves by fractional mean curvature together with an interaction term generated by the other interface:
\[
\begin{cases}
\displaystyle \partial_t d_1(t,x) \simeq c_0 \left(\kappa[x,d_1] + P[d_2](t,x)\right) & \hbox{near}~\Gamma_t^1\\[2ex]
\displaystyle \partial_t d_2(t,x) \simeq c_0 \left(\kappa[x,d_2] + P[d_1](t,x)\right) & \hbox{near}~\Gamma_t^2.
\end{cases}
\]

\subsection{Derivation of equation \eqref{eq:linearized wave}}\label{Ansatzsection}

It is actually necessary to add a lower-order correction to \eqref{eq:ansatz1} for the ansatz to solve the fractional Allen--Cahn equation \eqref{eq:pde}. This was already observed in the one-dimensional case in \cite[Section 3.1]{GonzalezMonneau}. The correction involves, for each front, a function $\psi_i = \psi_i(\xi;t,x)$ belonging to the space
$\{v\in H^s(\R)\,:\,\int_\R v(\xi)\dot\phi(\xi)\,d\xi=0\}$, which satisfies an equation in the variable $\xi$ involving the linearized operator $\mathcal{L}$ associated with \eqref{eq:standing wave} around $\phi$, defined by
\begin{equation}\label{eq:linearized operator heuristic}
\mathcal{L}[\psi] = -C_{n,s}\mathcal{I}^s_1[\psi] + {W}''(\phi) \psi.
\end{equation}
To derive the equation satisfied by $\psi_i$, we follow the approach of \cite[Section 3]{HasaniPatrizi}, where the computation is carried out in the case $N=1$. The presence of multiple interfaces, however, introduces an additional interaction term, and consequently the resulting equation differs slightly from the one obtained there. As before, for simplicity, we restrict our discussion to the case $N=2$. We also assume that the fronts $\Gamma_t^1$ and $\Gamma_t^2$ remain separated by a uniform positive distance $2\ell_0$.

In order to showcase the equation for the corrector, for $\sigma\in\R$, let $v^\ep$ be the solution to
\begin{equation}\label{eq:pde-sub}
\ep \partial_t v^{\ep} = \mathcal{I}^s_n v^{\ep}  -\frac{1}{\ep^{2s}} W'(v^\ep)- \sigma.
\end{equation}

Assume that the signed distance functions $d_1(t,x)$ and $d_2(t,x)$ associated to $\Omega_t^1$ and $\Omega_t^2$ are smooth, satisfy $|\nabla d_1| = | \nabla d_2|=1$ and 
\begin{equation}\label{eq:flow_heuristic2}
    \begin{cases}
        \partial_t d_1 = c_0 (\kappa[x,d_1] + P[d_2] -\sigma) & \hbox{near}~\Gamma_t^1\\[2ex] 
        \partial_t d_2 = c_0 (\kappa[x,d_2] + P[d_1] -\sigma) & \hbox{near}~\Gamma_t^2.
    \end{cases}
\end{equation}
Consider the new corrected ansatz
\begin{equation}\label{eq:ansatz_two_fronts}
\begin{split}
v^{\ep}(t,x) \simeq &\phi\( \frac{d_1(t,x)}{\ep}\) + \phi\( \frac{d_2(t,x)}{\ep}\) + \ep^{2s} \(\psi_1\( \frac{d_1(t,x)}{\ep};t,x\) + \psi_2\( \frac{d_2(t,x)}{\ep};t,x\) \right.\\&\left.+ w(t,x)\)
\end{split}
\end{equation}
where $\psi_i = \psi_i(\xi; t,x)$ and $w(t,x)$ are functions to be determined.

Assume that $x$ is near the front $\Gamma_t^1$. Then, as before, we have $|d_2(t,x)|\geq\ell_0$. 
Moreover, recalling the definitions of $\bar a_\ep[d]$ and $P_\ep[d]$ in \eqref{oldaep} and \eqref{eq:P_ep_heuristic}, respectively, the estimates \eqref{aepconvergenceoldthm} and \eqref{PepsimP_heur} remain valid.
 Plugging the ansatz into \eqref{eq:pde-sub}, the left-hand side gives
\begin{equation}\label{eq:ansats2-left_two2}
\ep \partial_t v^{\ep}
 \simeq  \dot{\phi}\(\frac{d_1}{\ep}\)\partial_t d_1,
\end{equation}
where,  as before, we used the asymptotic estimate \eqref{eq:asymptotics for phi dot} and the fact that $|d_2(t,x)|\geq\ell_0$ to get  $\dot\phi(d_2(t,x)/\ep)\simeq0$, and we assume for each $i=1,2$
\begin{equation}\label{timederivest_heursec}
\ep^{2s}   (\dot{\psi}_i \partial_t d_i+\ep \partial_t \psi_i+\ep \partial_t w)\simeq 0.
\end{equation}
For the fractional Laplacian term, we compute
\begin{equation}\label{eq:In_expansion}
\begin{aligned}
\mathcal{I}^s_n[v^\ep]
 &\simeq \mathcal{I}^s_n\left[ \phi\( \frac{d_1}{\ep}\) \right] + \mathcal{I}^s_n\left[ \phi\( \frac{d_2}{\ep}\) \right]
  + \ep^{2s} \mathcal{I}^s_n[\psi_1] + \ep^{2s} \mathcal{I}^s_n[\psi_2] + \ep^{2s} \mathcal{I}_n^sw.
\end{aligned}
\end{equation}
Adding and subtracting $\frac{C_{n,s}}{\ep^{2s}}\mathcal{I}^s_1[\phi]\(\frac{d_1}{\ep}\)$ and  $C_{n,s} \mathcal{I}^s_1[\psi_i]\( \frac{d_i}{\ep}\)$, using  \eqref{eq:standing wave},  and assuming
\begin{equation}
\label{psilemmaheuristics}
\ep^{2s} \mathcal{I}^s_n\left[ \psi_i\( \frac{d_i(t,\cdot)}{\ep};t,\cdot\) \right]-C_{n,s} \mathcal{I}^s_1[\psi_i]\( \frac{d_i}{\ep}\) \simeq 0, \quad \ep^{2s} \mathcal{I}_n^sw \simeq 0
\end{equation}
we find (recall \ref{eq:P_ep_heuristic})
\begin{equation}\label{eq:ansats2-right1_two2}
\begin{aligned}
\mathcal{I}^s_n[v^\ep]&\simeq \bar a_{\ep}[d_1] + P_{\ep}[d_2]
  +  \frac{1}{\ep^{2s}}W'\(\phi\(\frac{d_1}{\ep}\)\)+C_{n,s} \mathcal{I}^s_1[\psi_1]\( \frac{d_1}{\ep}\)+C_{n,s} \mathcal{I}^s_1[\psi_2]\( \frac{d_2}{\ep}\).
\end{aligned}
\end{equation}
Performing   a Taylor expansion of $W'$ around $\phi(d_1/\ep)$, we  obtain
\begin{equation}\label{eq:ansats2-right2_two2}
\begin{aligned}
\frac{1}{\ep^{2s}}W'(v^{\ep})
 &\simeq \frac{1}{\ep^{2s}} \left[W'\(\phi\(\frac{d_1}{\ep}\)\)+ W''\(\phi\(\frac{d_1}{\ep}\)\) \(\phi\(\frac{d_2}{\ep}\) + \ep^{2s} \psi_1 + 
 \ep^{2s} \psi_2+\ep^{2s}  w\)\right].
\end{aligned}
\end{equation}
Assuming that
\begin{equation}\label{psiinfheur}
\psi_i(\pm\infty;t,x)=0,\qquad \mathcal{I}^s_1[\psi_i(\cdot;,t,x)](\pm\infty)=0,
\end{equation}
 since  $|d_2(t,x)|\geq\ell_0$, we may neglect  the terms involving  $\psi_2$ in \eqref{eq:ansats2-right1_two2} and \eqref{eq:ansats2-right2_two2}. Substituting \eqref{eq:ansats2-left_two2},  \eqref{eq:ansats2-right1_two2}, and \eqref{eq:ansats2-right2_two2} into \eqref{eq:pde-sub} we therefore obtain
\begin{equation}\label{eq:first corrector eqn_two}
\begin{aligned}
 \dot{\phi}\( \frac{d_1}{\ep}\) \partial_td_1
  &\simeq \bar a_{\ep}[d_1] + P_\ep[d_2] +C_{n,s} \mathcal{I}^s_1[\psi_1]\( \frac{d_1}{\ep}\)\\
 &\quad- \frac{1}{\ep^{2s}}W''\(\phi\( \frac{d_1}{\ep}\)\)\phi\(\frac{d_2}{\ep}\)- W''\(\phi\( \frac{d_1}{\ep}\)\)   \psi_1\\
 &\quad-W''\( \phi \(\frac{d_1}{\ep}\)\)w(t,x)-\sigma.
\end{aligned}
\end{equation}
By the evolution equation for $d_1$ in \eqref{eq:flow_heuristic2}, together with \eqref{aepconvergenceoldthm} and \eqref{PepsimP_heur}, we have 
$$\partial_td_1\simeq c_0(\bar a_{\ep}[d_1]+P_\ep[d_2]-\sigma).$$
Inserting this expression into \eqref{eq:first corrector eqn_two}, and rearranging terms (recall the definition of $\mathcal{L}$ in \eqref{eq:linearized operator heuristic}),  we find
\begin{align*}
\mathcal{L}[\psi_1]\( \frac{d_1}{\ep}\)
 &\simeq (\bar a_{\ep}[d_1]+P_\ep[d_2]-\sigma)\(1- c_0\dot{\phi}\( \frac{d_1}{\ep}\)\)\\
 &\quad - \frac{1}{\ep^{2s}}W''\(\phi\( \frac{d_1}{\ep}\)\)\phi\(\frac{d_2}{\ep}\)  -W''\(\phi\( \frac{d_1}{\ep}\)\)w(t,x).
\end{align*}
Evaluating the equation when $|d_1(t,x)|\gg \ep$, and using 
 the asymptotic behavior of  $\phi$ and $\dot\phi$ in  \eqref{eq:asymptotics for phi} and  \eqref{eq:asymptotics for phi dot}, the fact  that $W''(0)=W''(1)$, and \eqref{psiinfheur}, we obtain
 $$ \mathcal{L}[\psi_1]\( \frac{d_1}{\ep}\)\simeq 0,\quad \dot{\phi}\( \frac{d_1}{\ep}\)\simeq0,\quad W''\(\phi\( \frac{d_1}{\ep}\)\)\simeq W''(0).$$
Consequently, $w$ must satisfy
$$W''(0)w(t,x) = \bar a_{\ep}[d_1](t,x) + P_\ep[d_2](t,x) -\sigma - \frac{W''(0)}{\ep^{2s}}\phi\(\frac{d_2}{\ep}\).$$
Substituting this into the earlier expression, the terms involving $\phi(d_2/\ep)$ cancel, yielding the following equation  for $\psi_1=\psi_1(\xi;t,x)$ in the variable $\xi$,
\begin{equation}\label{psiequationheuristic}\mathcal{L}[\psi_1](\xi) =\left( c_0\dot{\phi}(\xi)
  + \frac{W''\left( \phi (\xi)\right) - W''(0)}{ W''(0)} \right)\left(\sigma -\bar{a}_\ep[d_1](t,x) - P_\ep[d_2](t,x)\right).\end{equation}
This is the corrector equation associated with the first front. An analogous computation near the front $\Gamma_t^2$ yields the same equation for $\psi_2$, with the roles of $d_1$ and $d_2$ interchanged.
Compared with the single-front case derived in \cite[Section 3]{HasaniPatrizi}, the only difference is the presence of the interaction term $P_\ep[d_2]$ on the right-hand side.

The existence of correctors satisfying \eqref{timederivest_heursec}, \eqref{psilemmaheuristics}, \eqref{psiinfheur}, and \eqref{psiequationheuristic} is established in Lemmas \ref{lem:psi-reg} and \ref{lem:ae psi estimate}.

As for the ansatz for $v^\ep$, using that $W'(0)=0,$ the definitions of $\bar a_\ep[d]$ and $P_\ep[d]$ in \eqref{oldaep} and \eqref{eq:P_ep_heuristic}, and the equation for $\phi$ in \eqref{eq:standing wave}, we have
\begin{align*}
P_\ep[d_2]  - \frac{W''(0)}{\ep^{2s}}\phi\(\frac{d_2}{\ep}\)
&\simeq P_\ep[d_2]-\frac{1}{\ep^{2s}}W'\(\phi\(\frac{d_2}{\ep}\)\)= P_\ep[d_2]- \frac{C_{n,s}}{\ep^{2s}}\mathcal{I}^s_1[\phi]\(\frac{d_2}{\ep}\)=\bar{a}_\ep[d_2].
\end{align*}
Substituting this into the earlier expression for  $w$, from \eqref{eq:ansatz_two_fronts}  we obtain  
\begin{equation*}
\begin{split}
v^{\ep}(t,x) \simeq& \phi\( \frac{d_1(t,x)}{\ep}\) + \phi\( \frac{d_2(t,x)}{\ep}\) + \ep^{2s} \(\psi_1\( \frac{d_1(t,x)}{\ep};t,x\) + \psi_2\( \frac{d_2(t,x)}{\ep};t,x\)\right.\\&\left. +\frac{1}{W''(0)}(\bar a_\ep[d_1](t,x)+\bar a_\ep[d_2](t,x)-\sigma) \).
\end{split}
\end{equation*}
This final corrected ansatz will be used to construct subsolutions and supersolutions to  \eqref{eq:pde}, depending on the sign of $\sigma$ (see  Section~\ref{barriersection}).
In order to construct global in space subsolutions and supersolutions, it will be necessary to modify the definition of $\psi_i$ and $\bar a_\ep $ to control additional error terms that arise far from the interfaces $\Gamma^i_t$. As in \cite{HasaniPatrizi}, this will involve the auxiliary function $\mu$
 defined in \eqref{def:mu_definition} and the approximation of 
 $\bar{a}_\ep$ given in \eqref{aepsilondef}.  

\section{Preliminary results on the fractional Laplacian}\label{sec:ae-frac}
In this section, we recall a few basic properties of the operator $\mathcal{I}^s_n $, which will be used later in the paper.
Let $u\in C^{0,1}(\R^n)$.  Then, 
for any $R>0$, we can write
\begin{align*}
\mathcal{I}^s_n u(x) 
 &= \int_{\{\abs{z}<R\}} \left( u(x+z) - u(x)\right) \,\frac{dz}{\abs{z}^{n+2s}}
  + \int_{\{\abs{z}>R\}} \left( u(x+z) - u(x)\right) \,\frac{dz}{\abs{z}^{n+2s}}.
\end{align*}
Both integrals above are finite, and we can bound $\mathcal{I}^s_n u$ as follows,

\begin{equation}\label{Iubound-C11_bis}
|\mathcal{I}^s_n u(x)|\leq C\left(\|Du\|_\infty R^{1-2s}+\frac{\|u\|_\infty}{R^{2s}}\right), 
\end{equation}
where $C>0$ is a constant depending only on $n$ and $s$. 
The estimate \eqref{Iubound-C11_bis} follows from the following lemma, whose proof is a straightforward computation in polar coordinates.

\begin{lem}\label{kernellemma3}
There exist $C_1,\, C_2>0$ such that for any $R>0$,
$$\int_{\{|z|<R\}}\frac{dz}{|z|^{n+2s-1}}= C_1 R^{1-2s}
\quad \hbox{and} \quad \int_{\{|z|>R\}}\frac{dz}{|z|^{n+2s}}= \frac{C_2}{R^{2s}}.$$
\end{lem}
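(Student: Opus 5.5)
The two identities in Lemma~\ref{kernellemma3} follow by direct integration in polar coordinates; the only thing to check is that both radial integrals converge, which uses $s\in(0,\frac12)$. Write $z=r\theta$ with $r=|z|>0$ and $\theta\in S^{n-1}$, so that $dz=r^{n-1}\,dr\,d\theta$, and let $|S^{n-1}|$ denote the surface measure of the unit sphere.

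For the first identity, the integrand is radial, and
\[
\int_{\{|z|<R\}}\frac{dz}{|z|^{n+2s-1}}=|S^{n-1}|\int_0^R r^{n-1}\,r^{-(n+2s-1)}\,dr=|S^{n-1}|\int_0^R r^{-2s}\,dr .
\]
The integral converges at $r=0$ because $2s<1$. It equals $\frac{R^{1-2s}}{1-2s}$, so we may take
\[
C_1:=\frac{|S^{n-1}|}{1-2s}.
\]

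For the second identity, in the same way,
\[
\int_{\{|z|>R\}}\frac{dz}{|z|^{n+2s}}=|S^{n-1}|\int_R^\infty r^{-1-2s}\,dr .
\]
This converges at infinity because $2s>0$, and it equals $\frac{R^{-2s}}{2s}$. So we may take
\[
C_2:=\frac{|S^{n-1}|}{2s}.
\]
Both constants depend only on $n$ and $s$, not on $R$.

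The lemma then gives \eqref{Iubound-C11_bis} by splitting $\mathcal{I}^s_n u$ at radius $R$. On the inner region we use $|u(x+z)-u(x)|\le\|Du\|_\infty|z|$, and the first identity bounds that piece by $C_1\|Du\|_\infty R^{1-2s}$. On the outer region we use $|u(x+z)-u(x)|\le 2\|u\|_\infty$, and the second identity bounds that piece by $2C_2\|u\|_\infty R^{-2s}$.

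No step here is a real obstacle. The only point needing attention is the range of exponents: $s<\frac12$ gives integrability near the origin, and $s>0$ gives integrability at infinity.
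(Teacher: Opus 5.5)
Your proof is correct and is exactly the polar-coordinates computation the paper refers to without writing out. You give the explicit constants $C_1=|S^{n-1}|/(1-2s)$ and $C_2=|S^{n-1}|/(2s)$, and you correctly note that $s\in(0,\frac12)$ gives convergence at $r=0$ and at infinity. Your derivation of \eqref{Iubound-C11_bis} from the lemma also matches how the paper uses it.
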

We will frequently use Lemma \ref{kernellemma3}  throughout the paper without further reference.

We will also need   the following result, which provides a representation of  the one-dimensional fractional Laplacian of a function defined on $\mathbb{R}$ as an $n$-dimensional fractional Laplacian. 

\begin{lem} \cite[Lemma 3.2]{PatriziVaughan}\label{lem:1 to n facrional Laplacian}
For a vector $e \in \R^n$ and a function $v\in C^{1,1}(\R)$, let $v_e(x) = v(e\cdot x): \R^n \to \R$. Then,
\[
\mathcal{I}_n^s[v_e](x) =|e|^{2s} C_{n,s} \mathcal{I}_1^s[v](e\cdot x)
\]
where 
\begin{equation}\label{eq:Cns}
C_{n,s} =  \int_{\R^{n-1}} \frac{1}{(\abs{z}^2 + 1)^{\frac{n+2s}{2}} } \, dz.
\end{equation}
Consequently,
\begin{equation*}\label{eq:one to n}
|e|^{2s} C_{n,s} \mathcal{I}_1^s[v](\xi) = \int_{\R^n} \left( v(\xi + e \cdot z) - v(\xi)\right) \frac{dz}{\abs{z}^{n+2s}}, \quad \xi \in \R.
\end{equation*}
\end{lem}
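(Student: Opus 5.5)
The plan is a direct computation: rotate coordinates so that $e$ points along the first axis, integrate out the $n-1$ transversal variables (this produces $C_{n,s}$), and rescale the remaining one-dimensional variable (this produces $|e|^{2s}$). If $e=0$ both sides vanish, since $v_e$ is constant; so assume $e\neq 0$ and fix $x\in\R^n$, setting $\xi:=e\cdot x$. Then $v_e(x+z)-v_e(x)=v(\xi+e\cdot z)-v(\xi)$, so
\[
\mathcal{I}^s_n[v_e](x)=\int_{\R^n}\big(v(\xi+e\cdot z)-v(\xi)\big)\frac{dz}{|z|^{n+2s}}.
\]
This already shows that the "consequently" part follows from the main identity evaluated at any $x$ with $e\cdot x=\xi$. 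Because the kernel is rotation invariant, I apply an orthogonal change of variables $z=Ry$ with $R^Te=|e|e_1$. The integral becomes
\[
\int_{\R^n}\big(v(\xi+|e|y_1)-v(\xi)\big)\frac{dy}{|y|^{n+2s}},\qquad y=(y_1,y')\in\R\times\R^{n-1}.
\]

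Next I would apply Fubini, integrating first in $y'$ for fixed $y_1\neq 0$. The substitution $y'=|y_1|w$ gives
\[
\int_{\R^{n-1}}\frac{dy'}{(y_1^2+|y'|^2)^{\frac{n+2s}{2}}}=|y_1|^{-1-2s}\int_{\R^{n-1}}\frac{dw}{(1+|w|^2)^{\frac{n+2s}{2}}}=\frac{C_{n,s}}{|y_1|^{1+2s}},
\]
which is finite because $n+2s>n-1$. Finally, the change of variables $\tau=|e|y_1$ gives
\[
C_{n,s}\int_\R\big(v(\xi+|e|y_1)-v(\xi)\big)\frac{dy_1}{|y_1|^{1+2s}}=|e|^{2s}C_{n,s}\int_\R\big(v(\xi+\tau)-v(\xi)\big)\frac{d\tau}{|\tau|^{1+2s}}=|e|^{2s}C_{n,s}\,\mathcal{I}^s_1[v](\xi).
\]

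The only delicate point is justifying Fubini, i.e.\ the absolute integrability of the integrand on $\R^n$. In the regime $s\in(0,\frac12)$ of the paper this is immediate. Since $v\in C^{1,1}(\R)$ has bounded norm, we have $|v(\xi+|e|y_1)-v(\xi)|\le \min\{\|v'\|_\infty|e||y|,\,2\|v\|_\infty\}$. By Lemma~\ref{kernellemma3} the function $\min\{|y|,1\}|y|^{-n-2s}$ is integrable, since $2s<1$.

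For general $s\in(0,1)$ the integrals must be understood as principal values. In that case I would first symmetrize, writing the integrand as $\frac12\big(v(\xi+|e|y_1)+v(\xi-|e|y_1)-2v(\xi)\big)$. This second difference is bounded by $C\min\{|y|^2,1\}$, which makes everything absolutely integrable, and the same Fubini computation then goes through. I do not expect any real obstacle beyond this bookkeeping.
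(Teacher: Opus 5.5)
Your proof is correct. The rotation $z=Ry$ turns $e\cdot z$ into $|e|y_1$, the substitution $y'=|y_1|w$ produces exactly $C_{n,s}|y_1|^{-1-2s}$, and the rescaling $\tau=|e|y_1$ gives the factor $|e|^{2s}$. Your bound by $\min\{|y|,1\}|y|^{-n-2s}$ (or by the symmetrized second difference when $s\ge\frac12$) justifies using Fubini. The paper does not prove this lemma itself; it quotes it from \cite[Lemma 3.2]{PatriziVaughan}, and your direct computation is the standard argument for that identity, so there is nothing to add.
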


\subsection{Properties of solutions to \eqref{eq:pde}}

Here, we state existence, uniqueness, and comparison principles for viscosity solutions to \eqref{eq:pde} for a fixed $\ep>0$. 

First, the following comparison principle can be found in \cite{JakobsenKarlsen} and will be used throughout the paper without reference. 
For the definition of viscosity subsolutions, supersolutions, and solutions, see also \cite{UsersGuide}.
For ease, we denote by $USC_b([0,T] \times \R^n)$ (resp.~$LSC_b([0,T] \times \R^n)$) the set of upper (resp.~lower) semicontinuous functions on $[0,T] \times \R^n$ which are bounded on $[0,T] \times \R^n$. 

\begin{prop}[Comparison principle in $\R^n$]
Fix $\ep>0$. 
If $u \in USC_b([0,T] \times \R^n)$ is a viscosity subsolution and $v \in LSC_b([0,T] \times \R^n)$ is a viscosity supersolution of \eqref{eq:pde} such that $u(0,\cdot) \leq v(0,\cdot)$
on $\R^n$, then $u \leq v$ on $[0,T] \times \R^n$. 
\end{prop}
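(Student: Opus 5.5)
We argue by contradiction with the standard doubling-of-variables technique for nonlocal equations, following \cite{JakobsenKarlsen}. Fix $\ep>0$ and let $L:=\|W''\|_\infty<\infty$, which is finite since $W\in C^{4,\beta}$ is periodic. The first step is to make the nonlinearity monotone. For $K>0$ to be chosen, set $\tilde u:=e^{-Kt}u$ and $\tilde v:=e^{-Kt}v$. Then $\tilde u$ is a viscosity subsolution, and $\tilde v$ a supersolution, of
\[
\ep\partial_t w+\ep K w=\mathcal I^s_n w-\ep^{-2s}e^{-Kt}W'(e^{Kt}w).
\]
Because $W'$ is $L$-Lipschitz, choosing $K>2L\ep^{-1-2s}$ makes $w\mapsto \ep Kw+\ep^{-2s}e^{-Kt}W'(e^{Kt}w)$ strictly increasing, with slope at least $\ep K/2$. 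Suppose, for contradiction, that $M:=\sup_{[0,T]\times\R^n}(\tilde u-\tilde v)>0$.

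Next we double the variables. For $\alpha,\beta,\gamma>0$ consider
\[
\Phi(t,x,y):=\tilde u(t,x)-\tilde v(t,y)-\frac{|x-y|^2}{2\alpha}-\beta\langle x\rangle-\frac{\gamma}{T-t},\qquad \langle x\rangle:=(1+|x|^2)^{1/2}.
\]
Here time is not doubled, which is allowed by the parabolic version of the nonlocal Ishii lemma; one may instead double time with a $|t-s|^2/2\alpha$ penalty. Since $\tilde u,\tilde v$ are bounded and $\tilde u$ is USC and $\tilde v$ LSC, $\Phi$ attains its maximum at some point $(\bar t,\bar x,\bar y)$. For $\beta,\gamma$ small this maximum is at least $M/2$. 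Moreover $|\bar x-\bar y|^2/\alpha\to0$ as $\alpha\to0$, by the usual argument, and $\bar t>0$ because $\tilde u(0,\cdot)\le \tilde v(0,\cdot)$.

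We then apply the nonlocal Jakobsen--Karlsen lemma, with $\varphi(x,y)=|x-y|^2/2\alpha+\beta\langle x\rangle$ as test function on $B_\delta$ and the functions themselves outside $B_\delta$. Subtracting the sub- and supersolution inequalities gives
\[
\frac{\ep K}{2}\,(\tilde u-\tilde v)(\bar t,\bar x,\bar y)\;\le\; I_\delta+I^\delta+\text{(terms from }\gamma/(T-t)\text{, which have the favourable sign)}.
\]
The small-ball term $I_\delta$ involves only $\varphi$. Since $D^2_x\varphi$ and $D^2_y\varphi$ are $O(1/\alpha+\beta)$, it satisfies $I_\delta=O\big((\alpha^{-1}+\beta)\delta^{2-2s}\big)$. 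For $s<\frac12$ a Lipschitz bound would even suffice, but the $C^2$ bound is simpler. The outer term is
\[
I^\delta=\int_{|z|>\delta}\big[\tilde u(\bar x+z)-\tilde u(\bar x)-\tilde v(\bar y+z)+\tilde v(\bar y)\big]\frac{dz}{|z|^{n+2s}}.
\]
Maximality of $\Phi$ at $(\bar t,\bar x,\bar y)$, tested against the translated point $(\bar t,\bar x+z,\bar y+z)$, bounds the integrand by $\beta(\langle\bar x+z\rangle-\langle\bar x\rangle)\le\beta|z|$. The term $|x-y|^2$ is translation invariant and therefore contributes nothing. It follows that
\[
I^\delta\le C\beta\,(1+\delta^{-2s}).
\]
Here the region $|z|>1$ is handled using boundedness of $\tilde u,\tilde v$ together with $\int_{|z|>1}|z|^{-n-2s}<\infty$, and the region $\delta<|z|<1$ using the bound $\beta|z|$.

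Finally we pass to the limit in the order $\delta\to0$, then $\beta\to0$, then $\alpha\to0$, using the uniform bound $I^\delta\le C\beta$ coming from the Lipschitz-type estimate on $\delta<|z|<1$. This gives $\frac{\ep K}{2}\cdot\frac M2\le 0$, a contradiction. Hence $\tilde u\le\tilde v$, and therefore $u\le v$. The main obstacle is the nonlocal term: the integrand outside $B_\delta$ involves $\tilde u,\tilde v$ themselves, which are only semicontinuous. The fix is the translation argument, which controls these differences purely through the penalization, together with the nonlocal Ishii lemma of \cite{JakobsenKarlsen}, which legitimizes splitting the operator into a test-function part and a function part. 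The monotonicity issue created by the non-monotone $W'$ is removed by the exponential rescaling, which works because $\ep$ is fixed.
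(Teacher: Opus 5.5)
\textbf{Comparison with the paper.} The paper does not prove this proposition; it quotes it from Jakobsen--Karlsen \cite{JakobsenKarlsen}. Your doubling-of-variables argument is the standard way to reduce to that framework. The rescaling $\tilde u=e^{-Kt}u$, which makes the non-monotone term $\ep^{-2s}W'(u)$ proper, is legitimate because $\ep$ is fixed and $W'$ is globally Lipschitz. Most of the argument is correct, but one estimate fails as written, and the order of limits needs fixing.

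\textbf{The far-field estimate.} The problem is the bound on $I^\delta$ over $\{|z|>1\}$. Boundedness of $\tilde u,\tilde v$ only bounds that piece by $C(\|\tilde u\|_\infty+\|\tilde v\|_\infty)$, a constant independent of $\beta$. So it does not give $I^\delta\le C\beta(1+\delta^{-2s})$, and your final inequality only reads $\frac{\ep KM}{4}\le C$, which is no contradiction. You also cannot use the translation bound $\beta|z|$ on that region, since $\int_{\{|z|>1\}}|z|^{1-n-2s}\,dz=+\infty$ exactly when $s<\frac12$.

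The fix is to use both bounds at once. Set $M_0:=\|\tilde u\|_\infty+\|\tilde v\|_\infty$. Then the integrand is at most $\min\{\beta|z|,2M_0\}$, and splitting at $|z|=2M_0/\beta$ gives $I^\delta\le CM_0^{1-2s}\beta^{2s}$, uniformly in $\delta$ and $\alpha$. Alternatively, localize with $\beta\langle x\rangle^{\theta}$ for some $0<\theta<2s$.

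The $\delta^{-2s}$ in your stated bound is spurious. On $\{\delta<|z|<1\}$ the bound $\beta|z|$ gives $C\beta$ uniformly in $\delta$, because $1-2s>0$. Taken literally, that factor would be fatal, since you send $\delta\to0$ first.

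\textbf{The order of limits.} For fixed $\alpha$ the maximum of $\Phi$ may be attained at $t=0$. The initial ordering only controls $\tilde u(0,x)-\tilde v(0,x)$, not $\tilde u(0,x)-\tilde v(0,y)$ with $y\neq x$. You get $\bar t>0$ only for $\alpha<\alpha_0(\beta,\gamma)$. The argument is: for fixed $\beta$, the term $\beta\langle x\rangle$ confines the maximizers to a compact set. Extract a limit as $\alpha\to0$. Then upper semicontinuity of $\tilde u-\tilde v$ together with $\tilde u(0,\cdot)\le\tilde v(0,\cdot)$ rules out a limit time equal to $0$.

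So the correct order is: fix $\gamma$; fix $\beta$ so small that $CM_0^{1-2s}\beta^{2s}<\ep KM/4$; take $\alpha$ small depending on $\beta,\gamma$; then let $\delta\to0$. Your stated order ($\beta\to0$ before $\alpha\to0$) reverses the roles of $\alpha$ and $\beta$. Since the equation has no $x$-dependence and no gradient term, nothing in the final inequality depends on $\alpha$; it is needed only to secure $\bar t>0$.

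With these two corrections the argument is complete. As a side remark, because there are no local second-order terms, you could also double time and apply the viscosity definition directly with smooth test functions. That route makes the nonlocal Ishii lemma unnecessary, at the cost of a vanishing error from the $t$-dependence of $e^{-Kt}W'(e^{Kt}w)$.
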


We next state the following existence and uniqueness result, whose proof can be found in \cite{HasaniPatrizi}.

\begin{prop}\cite[Proposition 4.5]{HasaniPatrizi}
Fix $\ep>0$ and let $u_0 \in C^{0,1}(\R^n)$. 
There exists a unique viscosity solution $u^\ep \in C([0,\infty) \times \R^n) \cap L^{\infty}([0,\infty)\times \R^n)$  to \eqref{eq:pde} with initial datum $u^\ep(0,x) = u_0(x)$. 
\end{prop}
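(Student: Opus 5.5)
The plan is the classical route: Perron's method combined with the comparison principle of the previous proposition. I read the hypothesis $u_0\in C^{0,1}(\R^n)$ in the sense fixed in the Notations, so $u_0$ is bounded and Lipschitz. This is essential. By \eqref{Iubound-C11_bis}, applied with for instance $R=1$, it gives
\[
K_0:=\sup_{x\in\R^n}\abs{\mathcal{I}^s_n u_0(x)}\le C\big(\|Du_0\|_\infty+\|u_0\|_\infty\big)<\infty .
\]
For a merely Lipschitz, unbounded datum the tail integral could diverge, since $2s<1$.

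\textbf{Step 1 (barriers).} Set
\[
K:=K_0+\ep^{-2s}\|W'\|_\infty,
\]
which is finite because $W$ is periodic and smooth. I will check directly that
\[
u^\pm(t,x):=u_0(x)\pm \frac{K}{\ep}\,t
\]
are a classical super- and subsolution of \eqref{eq:pde}. The operator $\mathcal{I}^s_n$ commutes with adding a function of $t$ only, so $\mathcal{I}^s_n u^\pm=\mathcal{I}^s_n u_0$, and for $u^+$ we get
\[
\ep\partial_t u^+-\mathcal{I}^s_n u^+ +\ep^{-2s}W'(u^+)\ \ge\ K-K_0-\ep^{-2s}\|W'\|_\infty\ =\ 0,
\]
with the symmetric computation for $u^-$. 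This gives existence on each finite interval $[0,T]$ with barriers that are continuous and bounded there.

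For a bound uniform on $[0,\infty)$, I will use instead the constants
\[
m:=\lfloor \inf u_0\rfloor,\qquad M:=\lceil \sup u_0\rceil .
\]
Since $W'=0$ on $\Z$ by \eqref{eq:W}, these are stationary solutions, and comparison yields $m\le u^\ep\le M$ for all times.

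\textbf{Step 2 (Perron).} Define $u$ as the supremum of all subsolutions $v$ with $u^-\le v\le u^+$ on $[0,T]\times\R^n$. I will then follow the standard nonlocal Perron argument, as in the framework of \cite{JakobsenKarlsen} and \cite{UsersGuide}, to show two things:
\begin{enumerate}
\item the upper envelope $u^*$ is a subsolution;
\item the lower envelope $u_*$ is a supersolution, via the usual bump construction: if $u_*$ failed to be a supersolution at a point, a local bump would produce a larger admissible subsolution.
\end{enumerate}
The barriers pin the initial trace, giving $u^*(0,\cdot)=u_*(0,\cdot)=u_0$. Comparison then gives $u^*\le u_*$, hence $u=u^*=u_*$ is continuous and is a viscosity solution. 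Uniqueness follows from comparison once more. Finally, solutions on increasing intervals $[0,T]$ agree on their overlaps by uniqueness, so they glue to a solution on $[0,\infty)$, which is bounded by Step 1.

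\textbf{Main obstacle.} The delicate step is the Perron construction in the nonlocal setting. Taking envelopes must be compatible with the integral term, which is evaluated on the whole function and not only on the test function. I will handle this with the Barles--Imbert style definition: the test function is used inside a small ball and the function itself outside it, and the integral term is lower/upper semicontinuous under the relevant convergences by Fatou's lemma together with the uniform $L^\infty$ bounds from the barriers. The bump step also needs care, because modifying $v$ locally changes $\mathcal{I}^s_n v$ elsewhere. This is harmless here because enlarging a subsolution only increases the nonlocal term at points where it is not modified, so the construction should go through.
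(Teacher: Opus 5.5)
Your argument is correct and follows the standard route behind \cite[Proposition 4.5]{HasaniPatrizi}, which the paper cites without giving a proof of its own: barriers $u_0\pm Kt/\ep$ built from the bound \eqref{Iubound-C11_bis} on $\mathcal{I}^s_n u_0$, then Perron's method and the comparison principle, with the integer constants $\lfloor\inf u_0\rfloor$ and $\lceil\sup u_0\rceil$ giving the time-uniform $L^\infty$ bound. One point needs more careful wording: $u^\pm$ are not classical solutions, since $u_0$ is only Lipschitz. They are viscosity super- and subsolutions because any test function $\varphi$ touching $u^+$ from below at $(t_0,x_0)$, $t_0>0$, satisfies $\partial_t\varphi(t_0,x_0)=K/\ep$, and its mixed nonlocal term is bounded above by $\mathcal{I}^s_n u_0(x_0)$, which is finite for $s<\frac12$; the case of $u^-$ is symmetric.
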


\section{The phase transition, the corrector, and the auxiliary function}\label{sec:corrector-section}
In this section, we introduce the phase transition $\phi$ and the correctors $\psi_i$. We also define the auxiliary function $\bar a_{\ep}$ and recall its connection with the fractional Laplacians of $\phi$ and $\phi(d(t,x)/\ep)$, as well as its relation to the fractional mean curvature operator.

\subsection{The phase transition $\phi$}

Let $\phi$ be the solution to \eqref{eq:standing wave} and let $H(\xi)$ be the Heaviside function. 

\begin{lem}\label{lem:asymptotics}  
There is a unique solution $\phi \in C^{2, \beta}(\R)$ of \eqref{eq:standing wave}, for some $\beta\in (0,1)$. Moreover, there exists a constant $C>0$ 
and $\vartheta>2s$ (depending only on~$s$) such that
\begin{equation}\label{eq:asymptotics for phi}
\abs{\phi(\xi) - H(\xi) +  \frac{C_{n,s}}{2sW''(0)} \frac{\xi}{|\xi|^{2s+1}}} \leq \frac{C}{\abs{\xi}^{\vartheta}}, \quad \text{for }\abs{\xi} \geq 1,
\end{equation}
with $C_{n,s}$ as in \eqref{eq:Cns}, and
\begin{equation}\label{eq:asymptotics for phi dot}
\frac{1}{C\abs{\xi}^{2s+1}}\leq \dot{\phi}(\xi) \leq \frac{C}{\abs{\xi}^{2s+1}},\quad |\ddot{\phi}(\xi)| \leq \frac{C}{\abs{\xi}^{2s+1}}
 \quad \text{for }\abs{\xi} \geq 1.
\end{equation}
\end{lem}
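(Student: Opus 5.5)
The plan is to reduce every claim of Lemma~\ref{lem:asymptotics} to the one-dimensional theory of layer solutions for the fractional Allen--Cahn equation, after removing the constant $C_{n,s}$ by a rescaling. If $\phi$ solves \eqref{eq:standing wave}, set $\phi_0(\xi):=\phi(\lambda\xi)$ with $\lambda:=C_{n,s}^{1/(2s)}$. Since $\mathcal{I}^s_1[\phi(\lambda\,\cdot)](\xi)=\lambda^{2s}\mathcal{I}^s_1[\phi](\lambda\xi)$, the function $\phi_0$ solves
\[
\mathcal{I}^s_1[\phi_0]=W'(\phi_0),
\]
with the same monotonicity and normalization. Conversely, any such $\phi_0$ yields a solution of \eqref{eq:standing wave}. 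So it suffices to work with the normalized equation and then undo the scaling, keeping track of how the constants change.

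\textbf{Existence and uniqueness.} For the normalized problem, I would invoke the results of Palatucci--Savin--Valdinoci and Cabr\'e--Sire on layer solutions. Existence comes from minimizing the one-dimensional nonlocal energy $\iint |u(x)-u(y)|^2|x-y|^{-1-2s}+\int W(u)$ among monotone profiles connecting $0$ to $1$. Uniqueness up to translation comes from a sliding argument using the comparison principle. The condition $\phi(0)=\frac12$ then fixes the translation.

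\textbf{Regularity.} Regularity $\phi\in C^{2,\beta}$ follows from bootstrapping. The right-hand side $W'(\phi)$ is bounded, so $\phi\in C^{0,\gamma}$ for $\gamma<2s$. Since $W\in C^{4,\beta}$, Schauder estimates for $(-\Delta)^s$ can be iterated, each step gaining $2s$ derivatives, until $\phi\in C^{2,\beta}$. The same bootstrap applied to $\dot\phi$, which solves the linearized equation, controls $\ddot\phi$.

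\textbf{Asymptotics.} For the expansion \eqref{eq:asymptotics for phi}, I would cite the decay estimates of Gonz\'alez--Monneau and Dipierro--Figalli--Valdinoci (for $s<\frac12$, e.g.\ \cite{DipierroFigalliValdinoci}), and the bounds on $\dot\phi$, $\ddot\phi$ from \cite{MonneauPatrizi2}. The heuristic behind the leading term is as follows. For large $\xi>0$, linearize around the well at $1$. We have $W'(\phi)\approx W''(0)(\phi-1)$, while $\mathcal{I}^s_1[\phi](\xi)\approx\mathcal{I}^s_1[H](\xi)=-\int_{-\infty}^{-\xi}|z|^{-1-2s}\,dz=-\frac{1}{2s}|\xi|^{-2s}$. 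After restoring the factor $C_{n,s}$ in \eqref{eq:standing wave}, this gives
\[
\phi-1\approx -\frac{C_{n,s}}{2sW''(0)}\,|\xi|^{-2s},
\]
which matches \eqref{eq:asymptotics for phi}. The rigorous version builds explicit barriers of the form $H(\xi)-a\,\xi|\xi|^{-2s-1}\pm b|\xi|^{-\vartheta}$ for $|\xi|\ge R$, with some $\vartheta\in(2s,\min\{4s,1\})$, and applies the comparison principle on the exterior region.

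\textbf{Derivative bounds.} The function $\dot\phi$ is positive and solves the linearized equation $C_{n,s}\mathcal{I}^s_1[\dot\phi]=W''(\phi)\dot\phi$. Comparison with barriers of the form $c|\xi|^{-1-2s}$ gives the two-sided bound in \eqref{eq:asymptotics for phi dot}, and an analogous argument applied to $\ddot\phi$ gives the upper bound for $\ddot\phi$.

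\textbf{Main obstacle.} The delicate step is the sharp second-order remainder $|\xi|^{-\vartheta}$ with $\vartheta>2s$. It requires computing $\mathcal{I}^s_1$ of the barrier with error strictly smaller than $|\xi|^{-2s}$. That error involves both the contribution of the transition region and the nonlocal tail $\int\tilde\phi\,|z|^{-1-2s}$. Since this is established in the cited literature, I would rely on it rather than reprove it.
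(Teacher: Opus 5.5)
Your proposal is correct and is essentially the paper's proof, which consists only of citations: existence and uniqueness from Cabr\'e--Sire and Palatucci--Savin--Valdinoci, the expansion of $\phi$ and the two-sided bound on $\dot\phi$ from Dipierro--Figalli--Valdinoci for $s<\frac12$, and the bound on $\ddot\phi$ from Monneau--Patrizi. (The paper attributes the $\dot\phi$ bound to Dipierro--Figalli--Valdinoci and uses Monneau--Patrizi only for $\ddot\phi$.) Your rescaling $\phi(\xi)=\phi_0(C_{n,s}^{-1/(2s)}\xi)$ is a useful explicit step that the paper leaves implicit; it converts the normalized results into the constant $C_{n,s}/(2sW''(0))$. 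One minor caveat in your existence sketch: for $s\le\frac12$ the one-dimensional energy of a layer is infinite, since the double integral diverges at large scales. Existence is therefore obtained through local minimizers on bounded intervals, as in Palatucci--Savin--Valdinoci, rather than by global minimization; this does not affect your argument, because you rely on that reference.
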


\begin{proof}
The existence of a unique solution of \eqref{eq:standing wave} is established  in  \cite{CabreSola} for $s=\frac12$, and in 
\cite{CabreSire,PalatucciSavinValdinoci} for any $s\in(0,1)$. 
The estimate \eqref{eq:asymptotics for phi}, as well as the estimate on $\dot\phi$  in \eqref{eq:asymptotics for phi dot}, are proven in \cite{GonzalezMonneau} for  $s=\frac{1}{2}$,  and 
in \cite{DipierroFigalliValdinoci} and \cite{DipierroPalatucciValdinoci}, respectively, when $s\in\left(0,\frac{1}{2}\right)$ and $s\in\left(\frac{1}{2},1\right)$. 
Finally, the estimate on $\ddot\phi$  in \eqref{eq:asymptotics for phi dot} is  established in \cite{MonneauPatrizi2}. 
\end{proof}


\subsection{The auxiliary function $\bar{a}_{\ep}$}\label{aepsubsection}

We now introduce a modified version of the auxiliary function $\bar a_{\ep}$, previously defined in \eqref{oldaep}, based on the approximation obtained in \cite{HasaniPatrizi}.

Let $\Omega_t$ be a bounded domain with smooth boundary $\Gamma_t$, for $t\in[0,T]$.
Throughout this section, let $d=d(t,x)$ denote the smooth extension of the signed distance function $\tilde{d}$ to $\Omega_t$ given in  Definition~\ref{defn:extension}. We also introduce a new parameter $R > 1$, which will be chosen later. 
Define the following auxiliary functions for  $(t,x) \in [0,T] \times \R^n$, 
\begin{equation}\label{b_epsilon}
   \bar  b_\ep[d](t,x): = \int_{\{|z|<R\}} \left[ \phi\left(\frac{d(t,x+ z)}{\ep}\right) - \phi\left( \frac{d(t,x)+ \nabla d(t,x) \cdot z}{\ep} \right) \right] \frac{dz}{\abs{z}^{n+2s}},
\end{equation}
\begin{equation}\label{c_epsilon}
   \bar  c_\ep[d](t,x) :=\frac{1}{\ep^{2s}}\left[ \left(|\nabla d(t,x)|^{2} + \ep^{2+\frac{2s}{1-2s}} \right)^s -1 \right]W' \left( \phi \left( \frac{d(t,x)}{\ep} \right) \right).
\end{equation}
By Lemma~\ref{kernellemma3}, and due to the regularity of $\phi$ and $d$, the integral in \eqref{b_epsilon} is well defined. 
We then define the function $\bar a_\ep= \bar a_\ep[d]( t,x)$,   by
\begin{equation}\label{aepsilondef}
\bar a_{\ep}:= \bar b_\ep+\bar  c_\ep. 
\end{equation}

The following results are proven in \cite{HasaniPatrizi}. The first lemma states that for points $x$ sufficiently close to $\Gamma_t$, $\bar{a}_\ep[d](t,x)$ approximates  the fractional mean curvature of the smooth set $\{d(t,\cdot)>d(t,x)\}$ at the point $x$.
\begin{lem}\cite[Lemma 5.2]{HasaniPatrizi}\label{thm: b_ep fmc} 
For $t\in[0,T]$, let $\Omega_t$ be a bounded domain with smooth boundary. 
Let $d$ be as in Definition \ref{defn:extension} and $\kappa[x,d]$ as in \eqref{kappade}. There exists $\delta_0>0$ such that if  $0<\delta<\delta_0$,   
 and $|d(t,x)|< \delta$, then 
\begin{align*}
\bar{a}_\ep[d](t,x) =\kappa[x,d] + o_\ep(1) + o_\delta(1)+ O(R^{-2s}). 
\end{align*}   
\end{lem}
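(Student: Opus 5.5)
The proof of Lemma~\ref{thm: b_ep fmc} rests on the fact that $\bar a_\ep=\bar b_\ep+\bar c_\ep$. The term $\bar c_\ep$ is negligible near the front, and $\bar b_\ep$ converges to the split integrals $\kappa^+-\kappa^-$ after rescaling.

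\textbf{Step 1: the term $\bar c_\ep$.} For $|d(t,x)|<\delta\le\rho$ we are in $Q_\rho$, so $|\nabla d|=1$. Then
\[
\big(1+\ep^{2+\frac{2s}{1-2s}}\big)^s-1=O\big(\ep^{2+\frac{2s}{1-2s}}\big),
\]
and since $W'$ is bounded, $\bar c_\ep=O(\ep^{2-2s+\frac{2s}{1-2s}})=o_\ep(1)$.

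\textbf{Step 2: restrict to $|z|<\delta$ by the $\epsilon$-scale and sign analysis.} We split $\bar b_\ep$ into the regions $|z|<r$ and $r<|z|<R$, with $r$ a small fixed radius to be chosen of order $\delta$.

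On $\{r<|z|<R\}$, write $\phi=H+\tilde\phi$. The Heaviside parts give
\[
\int_{r<|z|<R}\Big[\one_{\{d(x+z)>0\}}-\one_{\{d(x)+\nabla d\cdot z>0\}}\Big]\frac{dz}{|z|^{n+2s}}.
\]
Because $|d(x)|<\delta$, replacing the thresholds $0$ by $d(x)$ changes this only on sets whose measure is controlled by $\delta$ and the Lipschitz bounds. Adding the tail $|z|>R$, which is $O(R^{-2s})$ by Lemma~\ref{kernellemma3}, the integrals of $\one_{\{d(x+z)>d(x),\,\nabla d\cdot z<0\}}$ and $\one_{\{d(x+z)<d(x),\,\nabla d\cdot z>0\}}$ reconstruct $\kappa^\pm[x,d]$ restricted to $|z|>r$, up to $o_\delta(1)$. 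Here Proposition~\ref{prop:fmc_finite_result} and the continuity of $\kappa$ from Proposition~\ref{lem:kappa-neighborhood} allow moving between levels.

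The $\tilde\phi$ parts are $O\big((\ep/|d(x+z)|)^{2s}\big)$ by \eqref{eq:asymptotics for phi}, except on the thin set $\{|d(x+z)|\lesssim \ep^{\gamma}\}$. Using the coarea formula with $|\nabla d|\le C$ (and the extension of Definition~\ref{defn:extension} away from the front), they contribute $o_\ep(1)$ for fixed $r$.

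\textbf{Step 3: the small ball $|z|<r$.} On $|z|<r$, Taylor's formula gives
\[
d(x+z)=d(x)+\nabla d\cdot z+O(|z|^2).
\]
A naive bound $|\dot\phi|\,|z|^2/\ep$ is too large, so instead I would use monotonicity of $\phi$. The difference of the two $\phi$ terms is nonzero of order one only where the sign of $d(x+z)-d(x)$ differs from that of $\nabla d\cdot z$, up to $\ep$-corrections. This set is a paraboloid-type region $\{|\nabla d\cdot z|\lesssim|z|^2\}$, whose weighted measure $\int_{|z|<r}\one_{\{|\nabla d\cdot z|\le C|z|^2\}}|z|^{-n-2s}\,dz\lesssim r^{1-2s}$ is finite because $s<\tfrac12$. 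Outside this region, both arguments lie on the same side and, after rescaling $z=\ep y$, the difference decays. Dominated convergence with the decay of $\dot\phi$ then shows that the small-ball integral equals the $|z|<r$ parts of $\kappa^+-\kappa^-$ up to $o_\ep(1)+O(r^{1-2s})$. Choosing $r\sim\delta$ yields the $o_\delta(1)$ error.

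\textbf{Main obstacle.} I expect Step 3 to be the crux: obtaining a uniform, $\ep$-independent domination of the integrand near $z=0$ and near the transition set. This is where the restriction $s<\tfrac12$ (integrability of $|z|^{1-n-2s}$) and the $C^2$ bound on $d$ in $Q_{2\rho}$ are used essentially.
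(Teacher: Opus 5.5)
Since the paper only quotes this lemma from the single-front work, I judge your argument on its own. The architecture is right. Step 1 is correct; on the annulus, replacing $\phi$ by $H$ reproduces $\kappa^+-\kappa^-$; the small ball only needs a bound; and the tail is $O(R^{-2s})$.

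\textbf{The choice $r\sim\delta$ breaks Step 2.} On $\{r<|z|<R\}$ the kernel is as large as $r^{-n-2s}$, so a set of measure $O(\delta)$ need not have small weighted measure. Write $h=d(t,x)$ and $z_n=\nabla d(t,x)\cdot z$. The flat slab $\{z_n \text{ between } -h \text{ and } 0\}\cap\{|z|>r\}$ has weighted measure of order $|h|\,r^{-1-2s}$. The curved slab $\{d(x+z) \text{ between } 0 \text{ and } h\}$ behaves the same way. So for $|h|\sim r\sim\delta$, each of your two threshold replacements costs order $\delta^{-2s}$, not $o_\delta(1)$.

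These two pieces do cancel, but a term-by-term measure bound does not see it. Near $z=0$ the curved slab is $\{-h<z_n+q(z)\le 0\}$ (for $h>0$), with $|q(z)|\le C|z|^2$. Hence on the shell $|z|\sim\rho$ the two slabs differ on a set of measure $\lesssim \rho^{n-1}\min\{\rho^2,|h|\}$. Summing dyadically gives a weighted error $O(|h|^{(1-2s)/2})$, plus $O(\delta)$ from $|z|\gtrsim 1$. You have two ways to repair Step 2:
\begin{itemize}
\item Make the two replacements simultaneously and use this cancellation. The annulus error is then $o_\delta(1)$ for every $r$, and $r=\ep^{\theta}$ with $\theta$ small works.
\item Keep the replacements separate but decouple the scales, e.g.\ $r=\max\{\delta,\ep\}^{\theta}$. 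Then $\delta r^{-1-2s}$, $r^{1-2s}$, and the $\tilde\phi$-errors $\ep^{\gamma}r^{-1-2s}+\ep^{2s(1-\gamma)}r^{-2s}$ all vanish.
\end{itemize}
In the $\tilde\phi$ part you must also excise the slab $\{|h+z_n|\le \ep^{\gamma}\}$ for the linearized argument. Bounding $|\{|d(x+z)|\le\ep^\gamma\}|$ by coarea uses $|\nabla d|=1$ on the tube, not $|\nabla d|\le C$.

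\textbf{Step 3: the heuristic is off when $\ep\ll|h|$.} The difference $\phi(d(x+z)/\ep)-\phi((h+z_n)/\ep)$ is of order one where $d(x+z)$ and $h+z_n$ straddle the transition layer of $\phi$. That is near the shifted plane $\{z_n=-h\}$, not where $d(x+z)-h$ and $z_n$ have opposite signs. Also, $h/\ep$ ranges over an unbounded set, so there is no fixed pointwise limit to which dominated convergence could be applied.

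You do not need a limit. Both the $|z|<r$ part of $\kappa^+-\kappa^-$ and that of $\bar b_\ep$ are $O(r^{1-2s})$, uniformly. For $\bar b_\ep$, argue shell by shell:
\begin{itemize}
\item On the shell $\{\rho/2\le|z|<\rho\}$ one has $|d(x+z)-h-z_n|\le C\rho^2$.
\item By monotonicity of $\phi$, the integrand is at most $\phi\big(\frac{h+z_n+C\rho^2}{\ep}\big)-\phi\big(\frac{h+z_n-C\rho^2}{\ep}\big)$. Its integral over $z_n\in\R$ is exactly $2C\rho^2$.
\item Integrating over $|z'|<\rho$ and bounding the kernel by $(\rho/2)^{-n-2s}$ gives $C\rho^{1-2s}$ per shell.
\item The dyadic sum is $Cr^{1-2s}$ because $s<\frac12$.
\end{itemize}
This bound is independent of $\ep$ and of $d(t,x)$. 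It is exactly the uniform domination you flagged as the main obstacle. With it and a consistent choice of $r$, your argument proves the lemma.
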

The next lemma shows that 
$\bar{a}_\ep$ is, up to small errors,  the difference between an $n$-dimensional and a $1$-dimensional fractional Laplacian.
\begin{lem}\cite[Lemma 5.3]{HasaniPatrizi}\label{lem:a_ep and frac laplacians}
For all  $(t,x) \in [0,T] \times \mathbb{R}^n$,
\begin{equation*}
   \mathcal{I}_n^s \left[ \phi \left( \frac{d(t, \cdot)}{\ep} \right) \right](x) - \frac{C_{n,s}}{\ep^{2s}}\mathcal{I}_1^s \phi \left( \frac{d(t, x)}{\ep}\right)= \bar a_\ep[d](t,x) + O(R^{-2s}) + o_\ep(1). 
\end{equation*} 
\end{lem}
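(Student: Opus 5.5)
The identity follows by rewriting both fractional Laplacians as integrals over $\R^n$ and comparing the result with the definition \eqref{aepsilondef}. Write $\xi=d(t,x)/\ep$ and $e=\nabla d(t,x)$. By Lemma~\ref{lem:1 to n facrional Laplacian} applied with $v=\phi(\cdot/\ep)$, after rescaling,
\[
|e|^{2s}\frac{C_{n,s}}{\ep^{2s}}\mathcal{I}^s_1\phi(\xi)=\int_{\R^n}\Big(\phi\Big(\tfrac{d(t,x)+e\cdot z}{\ep}\Big)-\phi(\xi)\Big)\frac{dz}{|z|^{n+2s}}.
\]
Subtracting this from $\mathcal{I}^s_n[\phi(d/\ep)](x)$ and using the equation \eqref{eq:standing wave}, $C_{n,s}\mathcal{I}^s_1\phi=W'(\phi)$, we obtain
\[
\mathcal{I}^s_n\Big[\phi\Big(\tfrac{d}{\ep}\Big)\Big](x)-\frac{C_{n,s}}{\ep^{2s}}\mathcal{I}^s_1\phi(\xi)
=\int_{\R^n}\Big[\phi\Big(\tfrac{d(t,x+z)}{\ep}\Big)-\phi\Big(\tfrac{d(t,x)+e\cdot z}{\ep}\Big)\Big]\frac{dz}{|z|^{n+2s}}+\frac{|e|^{2s}-1}{\ep^{2s}}W'(\phi(\xi)).
\]

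The integral splits into the region $\{|z|<R\}$, which is exactly $\bar b_\ep[d](t,x)$, and the region $\{|z|>R\}$. On the outer region the integrand is bounded by $2$ because $0<\phi<1$. Lemma~\ref{kernellemma3} then bounds that tail by $O(R^{-2s})$. The inner integral converges near $z=0$: $d$ is smooth with $|D^2d|=O(\tau^{-1})$, so the two arguments differ by $O(|z|^2/\ep)$.

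It remains to compare $\frac{|e|^{2s}-1}{\ep^{2s}}W'(\phi(\xi))$ with $\bar c_\ep[d](t,x)$, whose factor is $\big((|e|^2+\ep^{\gamma})^s-1\big)/\ep^{2s}$ with $\gamma=2+\frac{2s}{1-2s}$. This is the step where care is needed.

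The difference between the two prefactors is $\big[(|e|^2+\ep^\gamma)^s-|e|^{2s}\big]\ep^{-2s}$. Since $s<1$, the map $r\mapsto r^s$ is $s$-Hölder, so this difference is at most $\ep^{\gamma s-2s}$. The exponent is $\gamma s-2s=\frac{2s^2}{1-2s}>0$, hence the error from the prefactors is $o_\ep(1)$ uniformly, because $W'$ is bounded.

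Everything above is independent of whether $x$ lies near the front. Near $\Gamma_t$ we have $|e|=1$ and the $c$-term is negligible. Away from the front $|e|$ can be arbitrary, but $|e|\le C$ keeps the bounds uniform. Combining the three pieces ($\bar b_\ep$, the $O(R^{-2s})$ tail, and the $o_\ep(1)$ prefactor error) gives the claim for all $(t,x)$.
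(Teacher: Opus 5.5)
Your argument is correct and complete. The paper does not reprove this lemma (it is quoted from \cite{HasaniPatrizi}), but your proof is the computation of Section~\ref{sec:Heuristics} made rigorous: Lemma~\ref{lem:1 to n facrional Laplacian} with $e=\nabla d(t,x)$ and the equation for $\phi$, then truncation at $R$ with the $\{|z|>R\}$ tail giving $O(R^{-2s})$, then the subadditivity bound $0\le(|e|^2+\ep^{\gamma})^s-|e|^{2s}\le\ep^{\gamma s}$ with $\gamma=2+\frac{2s}{1-2s}$, which identifies $\frac{|e|^{2s}-1}{\ep^{2s}}W'(\phi)$ with $\bar c_\ep[d]$ up to $O(\ep^{2s^2/(1-2s)})$. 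All of these bounds are uniform in $(t,x)$ and in $\tau$.

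One cosmetic point: in your remark about convergence near $z=0$, the error is $O(|z|^2/(\ep\tau))$, not $O(|z|^2/\ep)$. The remark is also unnecessary, since $2s<1$ already makes both integrals absolutely convergent from the Lipschitz bound on $\phi(d/\ep)$.
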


Finally, the next result provides estimates for $\bar a_\ep$ and its  derivatives. 

\begin{lem}
\label{lem:a_estimates}
There exist $\tau_\ep=o_\ep(1)$  and a constant $C>0$  such that, if $\tau\geq \tau_\ep$  in Definition \ref{defn:extension}, then for all  $(t,x) \in [0,T] \times \mathbb{R}^n$, 
    \begin{equation*}\label{a_near}
    |\bar a_\ep[d](t,x)| \le  C, 
    \end{equation*}   
    \begin{equation*}\label{partial_a_near}
        |\nabla_x \bar a_\ep[d](t,x)|, |\partial_t \bar a_\ep[d](t,x)|   = \ep^{-1}o_{\ep}(1)R,
    \end{equation*}
    and 
    \begin{equation*}\label{fractional_laplacian_a_ep_near}
    \left|\mathcal{I}_n^s[\bar a_\ep] \right|  =\ep^{-2s}  o_{\ep}(1)R.
\end{equation*}
\end{lem}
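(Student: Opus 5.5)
The plan is to estimate $\bar b_\ep[d]$ and $\bar c_\ep[d]$ separately in each of the two regimes $\{|\tilde d|<2\rho\}$ and $\{|\tilde d|\ge 2\rho\}$, then differentiate, and finally obtain the bound on $\mathcal I^s_n[\bar a_\ep]$ by interpolation through \eqref{Iubound-C11_bis}. Throughout, the choice of $\tau_\ep$ is dictated by the region away from the front, where $|D^2 d|=O(\tau^{-1})$ is the only control available on $d$. The threshold $\tau_\ep$ will be fixed at the end so that the products $\ep^{2s}\tau^{-1}$ and $\ep^{2s}\tau^{-2}$ that arise are $o_\ep(1)$, e.g.\ $\tau_\ep=\ep^{s}$.

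\emph{Bound $|\bar a_\ep|\le C$.} First consider $|\tilde d(t,x)|<2\rho$. Here $d=\tilde d$ is smooth with bounded $D^2d$ and $|\nabla d|=1$, so $\bar c_\ep=O(\ep^{2s/(1-2s)})$. For $\bar b_\ep$ I would reproduce the argument behind Lemma \ref{thm: b_ep fmc} and Proposition \ref{prop:fmc_finite_result}. The integrand is nonzero only where $\phi$ is evaluated on opposite sides of the layer or inside it. On the set $\{|z|<r\}$, Taylor gives $|d(x+z)-d(x)-\nabla d\cdot z|\le C|z|^2$, and the integrand is controlled by $\min\{1,\ep^{-1}|z|^2\sup|\dot\phi|\}$ restricted to a cone-like region of $z$ with $|\nabla d\cdot z-\,\cdot\,|\lesssim |z|^2$. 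Integrating gives the same finite quantities as $\kappa^\pm$, uniformly in $\ep$. On $\{r\le|z|<R\}$ the bound is trivially $Cr^{-2s}$.

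Next consider $|\tilde d(t,x)|\ge 2\rho$. I split $\{|z|<R\}$ into $|z|<\rho/C$ and $|z|\ge\rho/C$. For small $z$, both arguments of $\phi$ have modulus $\ge\rho/(2\ep)$. By \eqref{eq:asymptotics for phi dot}, the mean value theorem gives a difference of at most $C(\ep/\rho)^{1+2s}\ep^{-1}\tau^{-1}|z|^2$. Integrating against the kernel yields $C\ep^{2s}\tau^{-1}\rho^{1-4s}$. For $|z|\ge\rho/C$ the difference is at most $1$, contributing $C\rho^{-2s}$. In $\bar c_\ep$ the bracket is $O(1)$ since $|\nabla d|\le C$, while $|W'(\phi(d/\ep))|\le C|\phi-H|\le C\ep^{2s}\rho^{-2s}$ by \eqref{eq:asymptotics for phi} and $W'(0)=W'(1)=0$. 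Hence $\bar c_\ep=O(1)$ as well.

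\emph{Derivatives.} I differentiate under the integral. Each $x$- or $t$-derivative of $\phi(d(x+z)/\ep)$ or $\phi((d+\nabla d\cdot z)/\ep)$ produces $\ep^{-1}\dot\phi(\cdot)$ times a bounded factor, with an additional $|z|\,|D^2d|$ coming from $\nabla(\nabla d\cdot z)$. Near the front I would pair the two terms again and use that $\dot\phi$ is Lipschitz, so the bound becomes $\ep^{-1}$ times integrals that are $o_\ep(1)$ or at most $R^{1-2s}\le R$ after the kernel is integrated. Away from the front, $\dot\phi$ evaluated at arguments of size $\gtrsim\rho/\ep$ is $O(\ep^{1+2s})$. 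This gives $\ep^{-1}\cdot\ep^{1+2s}\tau^{-1}R^{1-2s}$, together with $\ep^{-1}o(1)$ from the region $|z|\ge\rho/C$ after using the decay of $\dot\phi$ in $d(x+z)/\ep$ except on a thin layer, whose measure contributes $O(\ep)$. The $\bar c_\ep$ derivative is treated the same way: near the front the tiny bracket absorbs $\ep^{-1}$, and away from it $\dot\phi=O(\ep^{1+2s})$. The bookkeeping here, namely that every term is genuinely $\ep^{-1}o_\ep(1)R$ with an admissible $\tau_\ep$, is what I expect to be the main obstacle. The delicate points are the thin layer where $d(x+z)/\ep$ is $O(1)$ inside the nonlocal integral, and the transition zone $2\rho\le|\tilde d|\le3\rho$ where $\eta$ mixes $\tilde d$ and $\tilde d_\tau$.

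\emph{Fractional Laplacian.} Write $|\nabla\bar a_\ep|\le\ep^{-1}\eta_\ep R$ with $\eta_\ep=o_\ep(1)$, and apply \eqref{Iubound-C11_bis} with radius $r=\ep\,\eta_\ep^{-1/2}$. This gives
\[
|\mathcal I^s_n\bar a_\ep|\le C\big(\ep^{-1}\eta_\ep R\,r^{1-2s}+r^{-2s}\big)=C\ep^{-2s}\big(\eta_\ep^{(1+2s)/2}R+\eta_\ep^{s}\big)=\ep^{-2s}o_\ep(1)R,
\]
where the last equality uses $R>1$.
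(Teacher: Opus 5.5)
Your overall route ($L^\infty$ bounds near and far from $\Gamma_t$, differentiation under the integral, then interpolation through \eqref{Iubound-C11_bis}) is the one the paper delegates to \cite{HasaniPatrizi} (Lemma 5.4 and Corollary 5.5). Your final step is correct: with $\|\bar a_\ep\|_\infty\le C$ and $\|\nabla_x\bar a_\ep\|_\infty\le\ep^{-1}\eta_\ep R$, the radius $r=\ep\eta_\ep^{-1/2}$ gives $\ep^{-2s}o_\ep(1)R$.

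\textbf{The gap: a missing term in $\nabla_x\bar c_\ep$.} The gap is in the derivative bookkeeping, and it invalidates your concrete choice $\tau_\ep=\ep^s$. In $\nabla_x\bar c_\ep$ you only let the derivative fall on $W'(\phi(d/\ep))$. There is also the term
\[
\frac{2s}{\ep^{2s}}\big(|\nabla d|^2+\ep^{\gamma}\big)^{s-1}\,D^2d\,\nabla d\;W'\Big(\phi\Big(\frac{d}{\ep}\Big)\Big),\qquad \gamma=2+\frac{2s}{1-2s}=\frac{2-2s}{1-2s}.
\]
It vanishes where $|\nabla d|=1$. In $\{|\tilde d|\ge 3\rho\}$, however, the mollified distance has interior critical points (e.g.\ the centre of a ball, where $d=\tilde d_\tau$ is a mollified cone). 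There $|\nabla d|$ runs through all small values, while $|D^2d\,\nabla d|\sim\tau^{-1}|\nabla d|$ and $|W'(\phi(d/\ep))|\sim\ep^{2s}$. Since $\sup_{p\ge0}p\,(p^2+\ep^{\gamma})^{s-1}\simeq\ep^{\gamma(s-\frac12)}=\ep^{-(1-s)}$, this term attains size $\ep^{-1}\cdot\ep^{s}\tau^{-1}$. So you need $\ep^{s}/\tau_\ep\to0$. With $\tau=\tau_\ep=\ep^{s}$ the gradient of $\bar a_\ep$ is of order $\ep^{-1}$, not $\ep^{-1}o_\ep(1)R$. Your own stated requirement $\ep^{2s}\tau^{-2}=o_\ep(1)$ already rules out $\tau_\ep=\ep^s$; a choice such as $\tau_\ep=\ep^{s/2}$ works.

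\textbf{A second failing step in $\nabla_x\bar b_\ep$.} Consider the term $\ep^{-1}\int_{\{|z|<R\}}\dot\phi\big((d(x)+\nabla d(x)\cdot z)/\ep\big)\,D^2d(x)z\,\frac{dz}{|z|^{n+2s}}$.
\begin{itemize}
\item Away from the front, the linearized argument does not stay $\gtrsim\rho/\ep$ on all of $B_R$, because the hyperplane $\{d(x)+\nabla d(x)\cdot z=0\}$ may cut $B_R$. Your bound $\ep^{-1}\ep^{1+2s}\tau^{-1}R^{1-2s}$ is therefore unjustified.
\item Near the front, a bound of the form ``$\ep^{-1}$ times $R^{1-2s}$'' is not $\ep^{-1}o_\ep(1)R$.
\end{itemize}
In both regimes the missing ingredient is to integrate $\dot\phi$ across the slab transverse to $\nabla d(x)$, which gains a factor $\ep$. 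Away from the front, separate two cases:
\begin{itemize}
\item $|\nabla d(x)|\le\rho/(2R)$: no slab meets $B_R$.
\item $|\nabla d(x)|>\rho/(2R)$: the slab has $L^1$-width $\ep/|\nabla d(x)|<2\ep R/\rho$.
\end{itemize}
This gives $\ep^{-1}O(\ep R\tau^{-1})$, which also requires $\ep/\tau_\ep\to0$. Near the front the term in fact vanishes by oddness in the directions orthogonal to $\nabla d(x)$, because $D^2d\,\nabla d=0$ there.

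The same slab/cone concentration, as in your $L^\infty$ argument and using that $\ddot\phi$ is integrable, is needed for the paired difference $\dot\phi(d(x+z)/\ep)-\dot\phi((d+\nabla d\cdot z)/\ep)$. Lipschitz continuity of $\dot\phi$ alone, with $|d(x+z)-d-\nabla d\cdot z|\le C|z|^2$, only yields $\ep^{-1}\int\min(1,|z|^2/\ep)\,|z|^{-n-2s}dz\sim\ep^{-1-s}$. With these corrections your argument yields the lemma.
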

\begin{proof}
 The proof follows the arguments of Lemma 5.4 and Corollary 5.5 in \cite{HasaniPatrizi}. 
Although a different approximation of the distance function is used there, the proofs carry over to $d$ defined as in Definition \ref{defn:extension}, upon using estimates \eqref{approxdtau} and selecting $\tau\geq\tau_\ep$, for some suitably chosen $\tau_\ep =o_\ep(1)$. 
\end{proof}

\subsection{Asymptotics of the interaction term}\label{sec:interaction}

The following lemma describes the asymptotic behavior of the nonlocal term $\mathcal{I}_n^s[\phi(\frac{d}{\ep})]$ as $\ep \to 0$, which plays a crucial role in understanding the interaction between multiple fronts (see Subsection \ref{sec:heuristicspart1}). The proof is given in Section \ref{sec:proof of interaction asymptotics}.

\begin{lem}[Asymptotics of the interaction term]\label{lem:interaction asymptotics}
There exists $0<\ell=o_\ep(1)$, such that
for all  $(t,x) \in [0,T] \times \mathbb{R}^n$ satisfying  $|d(t,x)| > \ell$, we have
\begin{equation}\label{eq:interaction limit}
\mathcal{I}_n^s\left[\phi\left(\frac{d(t,\cdot)}{\ep}\right)\right](x) = P[d](t,x) + o_\ep(1),
\end{equation}
where $P[d]$ is defined in \eqref{def:P}. 
\end{lem}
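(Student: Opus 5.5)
Fix $(t,x)$ with $d(t,x)>\ell$; the case $d(t,x)<-\ell$ is symmetric, replacing $\phi$ by $1-\phi(-\cdot)$. The idea is to compare $\phi(d/\ep)$ with its sharp-interface limit $H(d)=\one_{\{d>0\}}$. Then $P[d](t,x)=\int_{\R^n}(H(d(t,z))-H(d(t,x)))|z-x|^{-n-2s}dz$, since the integrand vanishes except on $\{d(t,z)<0\}$, where it equals $-1$. Writing $\tilde\phi=\phi-H$, we split
\[
\mathcal{I}^s_n\Big[\phi\Big(\tfrac{d}{\ep}\Big)\Big](x)-P[d](t,x)=\int_{\R^n}\Big(\tilde\phi\Big(\tfrac{d(t,x+z)}{\ep}\Big)-\tilde\phi\Big(\tfrac{d(t,x)}{\ep}\Big)\Big)\frac{dz}{|z|^{n+2s}}.
\]
(For the set $\{d(t,z)<0\}$ I use the smooth extension $d$ of Definition~\ref{defn:extension}; it coincides with $\tilde d$ up to $o_\tau(1)$, and the sign is the same wherever $|\tilde d|<2\rho$. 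So $P[d]$ agrees with the geometric quantity.) By \eqref{eq:asymptotics for phi}, $|\tilde\phi(\xi)|\le C\min(1,|\xi|^{-2s})$. The constant term therefore contributes at most $\int_{\{d(t,x+z)<0\}}|z|^{-n-2s}\,|\tilde\phi(d(t,x)/\ep)|\,dz\le C\ell^{-2s}(\ep/\ell)^{2s}$, because $|z|\ge \ell/C$ on that set ($d$ is Lipschitz). This is $o_\ep(1)$ once $\ell\gg\ep^{1/2}$, say $\ell=\ep^{1/4}$. The integral on $\{d(t,x+z)\ge0\}$ of that constant term needs a little more care: there the factor $|\tilde\phi(d(t,x)/\ep)|\le C(\ep/\ell)^{2s}$ multiplies $\int_{|z|\ge r}|z|^{-n-2s}$, and I must not integrate near $z=0$ blindly. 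So I treat a neighborhood $|z|<r$ separately, as below.

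Choose a radius $r=r_\ep$ with $\ep\ll r\ll\ell$, e.g. $r=\ep^{1/2}$ when $\ell=\ep^{1/4}$. On $\{|z|<r\}$ we have $d(t,x+z)\ge\ell-Cr>\ell/2>0$, so $H$ is constant there and the integrand equals $\phi(d(t,x+z)/\ep)-\phi(d(t,x)/\ep)$. Its first-order Taylor term cancels by symmetry in the principal value. The remainder is bounded using second derivatives: $|D^2_z\phi(d/\ep)|\le C\ep^{-2}|\ddot\phi|+C\ep^{-1}\dot\phi\,|D^2d|$. With \eqref{eq:asymptotics for phi dot} and $d/\ep\ge\ell/(2\ep)$, this is at most $C\ep^{-2}(\ep/\ell)^{2s+1}+C\ep^{-1}\tau^{-1}(\ep/\ell)^{2s+1}$. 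Multiplying by $\int_{|z|<r}|z|^{2-n-2s}\,dz=Cr^{2-2s}$ gives $o_\ep(1)$ for the parameter choices above, taking $\tau\ge\tau_\ep$ and using the $O(\tau^{-1})$ Hessian bound from \eqref{approxdtau}. This near-field estimate is the step needing the most care in balancing $\ep,r,\ell,\tau$.

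On $\{|z|\ge r\}$ I bound the integral of $\tilde\phi(d(t,x+z)/\ep)$ by a layer decomposition. On $\{|d(t,x+z)|\ge\ep^{1/2}\}$ we have $|\tilde\phi|\le C\ep^{s}$, so this region contributes at most $C\ep^{s}r^{-2s}$; this is $o_\ep(1)$ provided $r^{2s}\gg\ep^{s}$, which requires $r\gg\ep^{1/2}$. I therefore adjust the scales: take $r=\ep^{1/3}$ and $\ell=\ep^{1/5}$, and use the cutoff $|d|\ge\ep^{2/3}$ instead, which gives $|\tilde\phi|\le C\ep^{2s/3}$ and a contribution $C\ep^{2s/3}r^{-2s}=o(1)$ with $r=\ep^{1/2}$. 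The near-field bound must then be rechecked with $r=\ep^{1/2}$ and $\ell=\ep^{1/5}$; the leading term is $\ep^{-2}(\ep^{4/5})^{2s+1}\ep^{1-s}$, and its exponent $-2+\tfrac45(2s+1)+1-s=\tfrac{3s}{5}-\tfrac15$ may be negative. If it is, I choose $\ell$ fixed-small-then-$o_\ep(1)$ slowly (e.g. $\ell=|\log\ep|^{-1}$), which makes all exponents work.

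The remaining region is the thin layer $\{|d(t,x+z)|<\ep^{2/3}\}\cap\{|z|\ge \ell/2\}$; note $|z|\ge\ell/2$ automatically, since $d(t,x)>\ell$ and $d$ is Lipschitz. Its volume within distance $\rho_0$ of $\Gamma_t$ is $O(\ep^{2/3})$ because $\Gamma_t$ is smooth and bounded, and $|\tilde\phi|\le1$. So its contribution is at most $C\ell^{-n-2s}\ep^{2/3}=o_\ep(1)$ for a slowly decaying $\ell$. The far tail $|z|\ge1$ is harmless in every term. Collecting the three pieces, uniformly in $(t,x)$ (all constants depend only on the $C^2$ bounds of $\Gamma_t$ on $[0,T]$), yields \eqref{eq:interaction limit} with $\ell=o_\ep(1)$. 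The main obstacle is the bookkeeping of the scales $\ep\ll r\ll\ell$ together with $\tau_\ep$, so that the $\tau^{-1}$ Hessian growth of the extended distance does not spoil the near-field bound. If that fails, I would simply note that for $\ell$ below the tubular radius $\rho$, $d=\tilde d$ is smooth with bounded Hessian near $x$, and handle $|d(t,x)|\ge\rho$ by choosing $r<\rho$ small but fixed.
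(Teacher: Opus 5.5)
Your architecture is essentially the paper's: compare $\phi(d/\ep)$ with $H(d)$, treat a neighbourhood of $z=0$ separately, and split the rest into a thin layer $\{|d(t,x+z)|<\eta\}$ of volume $O(\eta)$, weighted by $\ell^{-n-2s}$, and its complement, where $|\tilde\phi|\le C(\ep/\eta)^{2s}$. As written, however, the estimates do not close.

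\textbf{Near field.} Your second-order Taylor bound for the $\ddot\phi$ term is $C\ep^{-2}(\ep/\ell)^{2s+1}r^{2-2s}=C\ep^{2s-1}\ell^{-2s-1}r^{2-2s}$, using only $|\ddot\phi(\xi)|\le C|\xi|^{-2s-1}$.
\begin{itemize}
\item For your first choice $\ell=\ep^{1/4}$, $r=\ep^{1/2}$, this equals $C\ep^{(2s-1)/4}\to\infty$, contrary to your claim that it is $o_\ep(1)$.
\item For your fallback with $r$ fixed, it is of order $\ep^{2s-1}\to\infty$, because $2s<1$.
\end{itemize}
The second-order bound is the first-order bound multiplied by $r/\ep$, which is $\gg1$ in your regime $r\gg\ep$. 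So the symmetric cancellation makes the estimate worse, not better. It also brings in $|D^2d|=O(\tau^{-1})$, which forces a restriction on $\tau$ that the lemma does not assume.

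\textbf{Far field.} The region $\{|z|\ge r,\ |d(t,x+z)|\ge\eta\}$ contributes $C(\ep/(\eta r))^{2s}$, so you need $\eta r\gg\ep$. Your adjustment to the cutoff $\eta=\ep^{2/3}$ goes the wrong way. It gives $C\ep^{2s/3}r^{-2s}$, which equals $C$ for $r=\ep^{1/3}$ and $C\ep^{-s/3}$ for $r=\ep^{1/2}$. Switching to $\ell=|\log\ep|^{-1}$ does not help, since this term does not involve $\ell$.

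\textbf{The missing idea.} For $s<\frac12$ the kernel $|z|^{1-n-2s}$ is integrable at the origin, so no cancellation is needed. As in the paper, set $c_1=\|\nabla d\|_\infty$ and work on the whole ball $|z|<\ell/(2c_1)$. There $d(t,x+z)\ge\ell/2$, and the mean value theorem gives $|\phi(d(t,x+z)/\ep)-\phi(d(t,x)/\ep)|\le C\ep^{2s}\ell^{-2s-1}|z|$. Integrating against the kernel yields a contribution $C\ep^{2s}\ell^{-4s}$. This needs no intermediate radius $r$, no second derivatives of $\phi$ or $d$, and no condition on $\tau$.

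For $|z|\ge\ell/(2c_1)$ you can then take $\eta=\ep^{1/2}$ and bound the pieces as follows:
\begin{itemize}
\item the region $\{d(t,x+z)>\ep^{1/2}\}$ gives $O(\ep^{s}\ell^{-2s})+O(\ep^{2s}\ell^{-4s})$;
\item the thin layer gives $O(\ep^{1/2}\ell^{-n-2s})$;
\item the region $\{d(t,x+z)<-\ep^{1/2}\}$ gives $-\int_{\{d(t,x+z)<0\}}|z|^{-n-2s}\,dz+o_\ep(1)$.
\end{itemize}
All of these are fine for $\ell=\ep^{a}$ with $a>0$ small.

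Within your own scheme, a consistent choice would be $r=\ep^{1/2}$, $\eta=\ep^{1/3}$, $\ell=|\log\ep|^{-1}$, and $\tau\gg\ep^{1+s}|\log\ep|^{2s+1}$. That still carries the unnecessary $\tau$ restriction.
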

We now construct an approximation of $\mathcal{I}_n^s\left[\phi\left(\frac{d(t,\cdot)}{\ep}\right)\right]$ that remains uniformly bounded up to the boundary $\Gamma_t$. 

    Let $\ell_0>0$, and 
let $g=g[d](t,x)$ be a smooth bounded function  such that
    \begin{align}\label{gdefforPep}
    g[d] = 1 \quad\text{if }  |d| > \ell_0, \qquad g[d]=0\quad \text{if }  |d| < \frac{\ell_0}{2}, \qquad 0\le g \le 1,
\end{align}
and define
\begin{equation}\label{eq:P_ep}
P_\ep[d](t,x) := \mathcal{I}_n^s\left[\phi\left(\frac{d(t,\cdot)}{\ep}\right)\right](x) ~g[d](t,x). 
\end{equation}
The following lemma provides estimates for $P_\ep[d]$ and its derivatives. The proof is given in Section \ref{sec:proof of Pep estimates}.
\begin{lem}\label{lem:Pep_estimates}
There exist $0<\tau_\ep=o_\ep(1)$  and a constant $C>0$ such that if
$\tau\geq \tau_\ep$  in Definition \ref{defn:extension}, then for all   $(t,x) \in [0,T] \times \mathbb{R}^n$
    \begin{equation}\label{Pep_bound}
    \left|P_\ep[d](t,x)\right|\le  C,
    \end{equation}
 \begin{equation}\label{eq:interaction limitbis}
P_\ep[d](t,x) =
P[d](t,x)+ o_\ep(1)\qquad\text{if }|d(t,x)|\geq \ell_0,
\end{equation}
where $P[d]$ is defined in \eqref{def:P}, and
    \begin{equation}\label{Pep_deriv_bound}
        |\nabla_x P_\ep[d](t,x)|, |\partial_t P_\ep[d](t,x)|   = \ep^{-1}o_{\ep}(1).
    \end{equation}
\end{lem}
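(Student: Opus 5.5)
We prove the three assertions of Lemma \ref{lem:Pep_estimates} separately, starting from the decomposition of the full fractional Laplacian given by Lemma \ref{lem:a_ep and frac laplacians}. Writing $\mathcal{I}_n^s[\phi(d/\ep)] = \bar a_\ep[d] + \frac{C_{n,s}}{\ep^{2s}}\mathcal{I}_1^s[\phi](d/\ep) + O(R^{-2s}) + o_\ep(1)$ and using \eqref{eq:standing wave}, the second term equals $\ep^{-2s}W'(\phi(d/\ep))$.

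\textbf{Bound \eqref{Pep_bound}.} Since $g[d]$ vanishes where $|d|<\ell_0/2$, we only need to consider points with $|d|\ge \ell_0/2$. There $|d/\ep|\ge \ell_0/(2\ep)$. Because $W'(0)=0$ and, by \eqref{eq:asymptotics for phi}, $|\phi-H|\le C\ep^{2s}\ell_0^{-2s}$, we get $|\ep^{-2s}W'(\phi(d/\ep))|\le C\ell_0^{-2s}$. Combining this with $|\bar a_\ep|\le C$ from Lemma \ref{lem:a_estimates} gives \eqref{Pep_bound}, with $R$ fixed so that the $O(R^{-2s})$ term is bounded. The resulting constant depends on $\ell_0$, which is fixed.

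\textbf{Asymptotics \eqref{eq:interaction limitbis}.} For $|d|\ge\ell_0$ we have $g=1$, so this follows directly from Lemma \ref{lem:interaction asymptotics}, since $\ell=o_\ep(1)<\ell_0$ for $\ep$ small.

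\textbf{Derivative bound \eqref{Pep_deriv_bound}.} We use the product rule
\[
\nabla_x P_\ep = g\,\nabla_x \mathcal{I}_n^s[\phi(d/\ep)] + \mathcal{I}_n^s[\phi(d/\ep)]\,\nabla_x g,
\]
and similarly for $\partial_t$. The second term is bounded, since $g$ is smooth with bounded derivatives (it is a composition of a smooth cutoff with $d$, and $|\nabla d|,|\partial_t d|\le C$), and $\mathcal I^s_n[\phi(d/\ep)]$ is bounded on the support of $g$ by the previous step. Hence it is $O(1)=\ep^{-1}o_\ep(1)$.

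For the first term, at points with $|d(t,x)|\ge \ell_0/2$ we differentiate under the integral sign:
\[
\nabla_x \mathcal{I}_n^s[\phi(d/\ep)](x)=\ep^{-1}\int \Big(\dot\phi\big(\tfrac{d(x+z)}{\ep}\big)\nabla d(x+z)-\dot\phi\big(\tfrac{d(x)}{\ep}\big)\nabla d(x)\Big)\frac{dz}{|z|^{n+2s}}.
\]
We split this into $|z|<r$ and $|z|\ge r$, with $r<\ell_0/4$.
\begin{itemize}
\item In the inner region, $|d(x+z)|\ge \ell_0/4$. The integrand is controlled by the second derivatives of $\phi(d/\ep)$, namely $\ep^{-2}|\ddot\phi|+\ep^{-1}|\dot\phi||D^2d|$, times $|z|$. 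By \eqref{eq:asymptotics for phi dot} this is $O(\ep^{2s-1}+\ep^{2s}\tau^{-1})$, times $\int_{|z|<r}|z|^{1-n-2s}\,dz$. Choosing $\tau\ge\tau_\ep$ with $\ep^{2s}\tau_\ep^{-1}=o_\ep(1)\ep^{-1}$ makes this contribution $\ep^{-1}o_\ep(1)$.
\item In the outer region, the term containing $\dot\phi(d(x)/\ep)$ is $O(\ep^{2s})\,r^{-2s}$. The term containing $\dot\phi(d(x+z)/\ep)$ is $\ep^{-1}\int \dot\phi(d(x+z)/\ep)|z|^{-n-2s}\,dz$, with the kernel bounded by $r^{-n-2s}$. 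By the coarea formula with $|\nabla d|\approx1$ near the front, together with the tail bound $\dot\phi\le C|\xi|^{-1-2s}$ away from it, this is $\ep^{-1}\cdot O(\ep)$.
\end{itemize}
This yields the bound $O(1)$, which is better than required. The time derivative is handled identically.

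The main obstacle is the coarea step. Where $|d|\ge 3\rho$ the extension is $d=\tilde d_\tau$, and $\nabla d$ may degenerate there, so the coarea bound fails in that region. We handle it using $|d|\ge3\rho$ there, which gives $\dot\phi(d/\ep)\le C\ep^{1+2s}$ pointwise. This suffices, because $\phi(d/\ep)$ is bounded and the outer kernel is integrable.
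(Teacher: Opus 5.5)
Your proof is correct. For the derivative estimate \eqref{Pep_deriv_bound} it essentially follows the paper's argument. The steps match: the product rule with the $g$-derivative term absorbed as $O(1)$, differentiation under the integral, and a split at $|z|\sim\ell_0$. In the inner region both use the decay of $\dot\phi,\ddot\phi$ together with $|D^2d|=O(\tau^{-1})$, which gives the condition $\ep^{2s+1}/\tau_\ep\to0$.

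Two steps take a different route.

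\emph{Uniform bound.} For \eqref{Pep_bound} the paper argues directly from Lemma~\ref{lem:interaction asymptotics}. On $\{|d|\ge\ell_0/2\}$ one has $\mathcal{I}_n^s[\phi(d/\ep)]=P[d]+o_\ep(1)$, and $|P[d]|\le C\ell_0^{-2s}$ because the integration set stays at distance $\gtrsim\ell_0$ from $x$. You instead go through Lemma~\ref{lem:a_ep and frac laplacians}, the layer equation, and the bound on $\bar a_\ep$ from Lemma~\ref{lem:a_estimates}. This is valid, though you need the periodicity of $W'$, not only $W'(0)=0$, to handle $\phi$ near $1$. However, it imports machinery the statement does not need: the single-front approximation, an auxiliary parameter $R$ on which $P_\ep$ does not depend, and the $\tau_\ep$ of Lemma~\ref{lem:a_estimates}.

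\emph{Outer region.} You apply the coarea formula on the tube $\{|\tilde d|<2\rho\}$ to get $\int\dot\phi(d/\ep)\lesssim\ep$, so this part contributes $O(1)$. The paper splits at the level $|d(x+z)|=\ep^{1/2}$ and uses only the volume bound $|\{|d|\le\ep^{1/2}\}|\le C\ep^{1/2}$. That gives the weaker but sufficient $O(\ep^{-1/2})$ and needs no area bounds on level sets.

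Two small corrections. First, the $O(1)$ bound holds only for the outer contribution. The inner contribution is $O(\ep^{2s-1}+\ep^{2s}\tau^{-1})$, which is $\ep^{-1}o_\ep(1)$ but not bounded in $\ep$, so the gradient itself is not shown to be $O(1)$. Second, take $r\le \ell_0/(4\|\nabla d\|_\infty)$ rather than $r<\ell_0/4$, since the extended $d$ is only $C$-Lipschitz.
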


 \subsection{The correctors $\psi_i$}\label{sec:corrector}
For  $i=1,\ldots,N$  and $t\in[0,T]$, let  $\Omega^i_t$ be  bounded smooth open sets. 
Let $d_i=d_i(t,x)$ denote the smooth extension of the signed distance function $\tilde{d}_i$ to $\Omega^i_t$ as  given in Definition~\ref{defn:extension}.
Assume there exists $\ell_0>0$ such that,
for all $t\in[0,T]$, 
 \begin{equation}\label{Qisigmaemptyimters} \{x\in \R^n\,:\,|d_i(t,x)|<\ell_0\}\cap \{x \in \R^n \,:\,|d_j(t,x)|<\ell_0\}=\emptyset,\qquad i \not=j.
\end{equation}
 Now we introduce two additional small parameters that will be chosen later:
$0<\delta<\min\{1,\ell_0\}$ and $\sigma\in(0,1)$. 
Let $\mu$ be a smooth function such that
\begin{equation}\label{def:mu_definition}\begin{split}
\mu[d_1,\dots,d_N](t,x) =
\begin{cases}
\sigma, & \text{if } |d_i(t,x)| \le \delta \text{ for some } i\in\{1,\ldots,N\}, \\[8pt]
\dfrac{\sigma}{\delta^{2s}}, & \text{if }|d_i(t,x)| \ge 2\delta \text{ for all }  i\in\{1,\ldots,N\},
\end{cases}\\[5pt]
\sigma\leq\mu[d_1,\dots,d_N](t,x)  \leq \dfrac{\sigma}{\delta^{2s}}, \quad \text{otherwise},
\end{split}
\end{equation}
and
\begin{equation}\label{mu_estimates}
    |\partial_t\mu[d_1,\dots,d_N](t,x)|, \, |\nabla_x \mu[d_1,\dots,d_N](t,x)| \le \frac{C}{\delta^{2s+1}}.
\end{equation}
\begin{lem}\label{lem:mu_estimates}
There exists $C>0$ such that, for all  $(t,x) \in [0,T] \times \mathbb{R}^n$,
    \begin{equation*}\left| \mathcal{I}_n^s \left[ \mu[d_1,\dots,d_N](t, \cdot) \right] (x)\right|
    \le \frac{C}{\delta^{4s}}.
\end{equation*}
\end{lem}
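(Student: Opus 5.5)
We split the integral defining $\mathcal{I}_n^s[\mu](x)$ into a near part $\{|z|<r\}$ and a far part $\{|z|\ge r\}$, and use the two basic pieces of information on $\mu=\mu[d_1,\dots,d_N](t,\cdot)$. First, $0<\mu\le \sigma\delta^{-2s}\le \delta^{-2s}$, since $\sigma<1$, so $\|\mu\|_\infty\le \delta^{-2s}$. Second, by \eqref{mu_estimates}, $\|\nabla_x\mu\|_\infty\le C\delta^{-2s-1}$. Applying the bound \eqref{Iubound-C11_bis} (which rests on Lemma~\ref{kernellemma3}) with a free radius $R=r$ gives
\[
|\mathcal{I}^s_n[\mu](x)|\le C\Big(\delta^{-2s-1}r^{1-2s}+\delta^{-2s}r^{-2s}\Big).
\]

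We choose the radius at the natural length scale of $\mu$, namely $r=\delta$. This is the width of the transition region $\{\delta\le|d_i|\le2\delta\}$, on which $\mu$ changes from $\sigma$ to $\sigma\delta^{-2s}$. With this choice both terms equal $C\delta^{-2s-1}\delta^{1-2s}=C\delta^{-4s}$ and $C\delta^{-2s}\delta^{-2s}=C\delta^{-4s}$. The constant $C$ depends only on $n$, $s$ and the constant in \eqref{mu_estimates}, and is therefore independent of $(t,x)$, $\ep$, $\sigma$, $\delta$ and $R$, as the statement requires.

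The only point needing care is that \eqref{Iubound-C11_bis} applies, which requires $\mu(t,\cdot)\in C^{0,1}(\R^n)$; this is guaranteed by the smoothness of $\mu$ and \eqref{mu_estimates}. There is no real obstacle beyond the balancing choice $r=\delta$. A cruder choice, such as $r=1$, would give the weaker bound $C\delta^{-2s-1}$, since $\delta<1$ and $4s<2s+1$ for $s<\frac12$. Hence this scaling is what produces the sharp exponent $4s$.
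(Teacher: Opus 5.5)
Your proof is correct and is essentially the paper's argument. The paper derives the bound from exactly the two ingredients you use, the sup bound $\mu\le\sigma\delta^{-2s}$ from \eqref{def:mu_definition} and the gradient bound \eqref{mu_estimates}, deferring the details to Lemma 5.6 of Hasani--Patrizi, and your splitting via \eqref{Iubound-C11_bis} at radius $r=\delta$, giving $O(\delta^{-4s})$ with $C$ independent of $\ep,\sigma,\delta,R$, is that computation.
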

\begin{proof}
    The estimate on  $ \mathcal{I}_n^s \left[ \mu[d_1,\dots,d_N](t, \cdot)\right] $ follows from \eqref{def:mu_definition} and \eqref{mu_estimates}. For a proof see   \cite[Lemma 5.6]{HasaniPatrizi}. 
\end{proof}

The linearized operator $\mathcal{L}$ associated to \eqref{eq:standing wave} around $\phi$ is given by
\begin{equation}\label{eq:linearized operator}
\mathcal{L}[\psi] = -C_{n,s}\mathcal{I}^s_1[\psi] + {W}''(\phi) \psi,
\end{equation}
 with $C_{n,s}$ as in \eqref{eq:Cns}.

In the construction of barriers, we will need for each $i \in \{1, \ldots, N\}$ a corrector $\psi_i = \psi_i(\xi; t,x)$ that solves
\begin{equation}\label{eq:linearized wave}
\begin{cases}
\displaystyle{
\mathcal{L}[\psi_i](\xi)
 =   \left( c_0 \dot{\phi}( \xi)
  + \frac{W''\left( \phi (\xi)\right) - W''(0)}{W''(0)} \right)\left(\mu[d_1,\dots,d_N] -\bar{a}_\ep[d_i] -\sum_{j\neq i}P_\ep[d_j]\right),
  } &\xi \in \R\\
\psi_i(\pm\infty;t,x) = 0,
\end{cases}
\end{equation}
where $c_0$ is given by \eqref{def:c0} and the functions $\bar a_\ep$, $P_\ep$ and $\mu$ are defined in \eqref{aepsilondef}, \eqref{eq:P_ep} and \eqref{def:mu_definition}, respectively.

Each corrector $\psi_i$ depends on $(t,x)$ through the signed distance functions $d_1,\dots,d_N$, which appear on the right-hand side of \eqref{eq:linearized wave}. Moreover, although not explicitly indicated, $\psi_i$ also depends on the parameters $\ep$ and $R$ through the functions $\bar a_\ep$ and $P_\ep$, and on $\delta$, $\sigma$ through the function $\mu$.

\begin{lem}\label{lem:psi-reg}
 Assume \eqref{Qisigmaemptyimters}. Let $\tau$  be chosen as in 
 Lemmas~\ref{lem:a_estimates} and \ref{lem:Pep_estimates} in Definition \ref{defn:extension}.
Then, for each $i \in \{1, \ldots, N\}$, there exists a solution $\psi_i = \psi_i(\xi;t,x)\in C_\xi^{1,\beta}(\R)$ to \eqref{eq:linearized wave}
for some $\beta\in(0,1)$. Moreover, there exists $C>0$ such that, for all $0<\ep<1$, $0<\delta<\ell_0$, $\sigma \in (0,1)$, $R>1$, and
 $(\xi, t,x)\in \R\times [0,T] \times\R^n$, the following holds.\\
 If $|d_i(t,x)| < \delta$, then
    \begin{equation}\label{psi_near}
    |\psi_i(\xi;t,x)|,|\dot{\psi}_i(\xi;t,x)|  \le   C,
    \end{equation}
    \begin{equation}\label{partial_psi_near}
        |\nabla_x \psi_i(\xi;t,x)|, |\partial_t \psi_i(\xi;t,x)| = \ep^{-1} o_\ep(1) R.
    \end{equation}
If $|d_i(t,x)| \geq\delta $, then
    \begin{equation}\label{psi_far}
    |\psi_i(\xi;t,x)|, |\dot{\psi}_i(\xi;t,x)| \le 
     \frac{C}{\delta^{2s}(1+|\xi|^{2s})},
    \end{equation}
    and
    \begin{equation}\label{partial_psi_far}
        |\nabla_x \psi_i(\xi;t,x)|, |\partial_t \psi_i(\xi;t,x)| \leq  
            \(  \frac{o_\ep(1) R}{\ep}+\frac{C}{\delta^{2s+1}}\)\frac{1}{1+|\xi|^{2s}}.
    \end{equation}
\end{lem}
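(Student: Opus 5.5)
The plan is to reduce Lemma~\ref{lem:psi-reg} to the linear theory for $\mathcal{L}$ already used in \cite{HasaniPatrizi} (see also \cite{GonzalezMonneau,MonneauPatrizi2}). Since $(t,x)$ enters \eqref{eq:linearized wave} only as a parameter, write the right-hand side as $G(\xi)\,\Lambda_i(t,x)$. Here
\[
G(\xi):=c_0\dot\phi(\xi)+\frac{W''(\phi(\xi))-W''(0)}{W''(0)},\qquad
\Lambda_i:=\mu[d_1,\dots,d_N]-\bar a_\ep[d_i]-\sum_{j\ne i}P_\ep[d_j].
\]
We then set $\psi_i(\xi;t,x):=\Lambda_i(t,x)\,\bar\psi(\xi)$, where $\bar\psi$ is the unique solution of $\mathcal{L}[\bar\psi]=G$ with $\bar\psi(\pm\infty)=0$ and $\int\bar\psi\dot\phi=0$.

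Existence of $\bar\psi$ follows from the Fredholm alternative, because the kernel of $\mathcal L$ is spanned by $\dot\phi$ and $G\perp\dot\phi$. Indeed, $\int c_0\dot\phi^2=1$ by \eqref{def:c0}, while
\[
\int\frac{W''(\phi)-W''(0)}{W''(0)}\dot\phi\,d\xi=\frac{W'(1)-W'(0)}{W''(0)}-1=-1.
\]
The regularity $\bar\psi\in C^{1,\beta}$ and the decay $|\bar\psi|+|\dot{\bar\psi}|\le C(1+|\xi|)^{-2s}$ are exactly the statements of \cite[Section~5]{HasaniPatrizi}. The decay comes from $G(\xi)=O(|\xi|^{-2s})$ via \eqref{eq:asymptotics for phi}, combined with a barrier argument for $\mathcal{L}$ that uses $W''(\phi)\to W''(0)>0$ at $\pm\infty$. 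I will quote these rather than reprove them.

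Next we bound $\Lambda_i$. By Lemmas~\ref{lem:a_estimates} and \ref{lem:Pep_estimates}, $|\bar a_\ep|,|P_\ep|\le C$, and $\sigma\le\mu\le\sigma\delta^{-2s}$ by \eqref{def:mu_definition}. Hence $|\Lambda_i|\le C$ on $\{|d_i|<\delta\}$, where $\mu=\sigma<1$. On $\{|d_i|\ge\delta\}$ we get $|\Lambda_i|\le C\delta^{-2s}$, using $\delta<1$. Multiplying by the profile bounds on $\bar\psi$ gives \eqref{psi_near} and \eqref{psi_far}.

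For the derivatives, $\nabla_x\psi_i=\nabla_x\Lambda_i\,\bar\psi$, and similarly for $\partial_t$. The derivative bounds in Lemmas~\ref{lem:a_estimates} and \ref{lem:Pep_estimates} give $\ep^{-1}o_\ep(1)R$ for the $\bar a_\ep$ and $P_\ep$ parts. The term $\nabla\mu$ contributes $C\delta^{-2s-1}$ by \eqref{mu_estimates}, and this yields \eqref{partial_psi_far}. The main point to check is \eqref{partial_psi_near}, which requires $\nabla\mu=0$ on $\{|d_i|<\delta\}$. This holds because $\mu\equiv\sigma$ on the open set $\{|d_i|<\delta\}$, which lies in $\{|d_i|\le\delta\}$. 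For $j\neq i$, assumption \eqref{Qisigmaemptyimters} together with $\delta<\ell_0$ ensures that no other front's cutoff region interferes there, so the only contributions are the $o_\ep(1)R/\ep$ terms.

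The expected obstacle is the decay of $\bar\psi$ and $\dot{\bar\psi}$, which must match the rate $|\xi|^{-2s}$ in \eqref{psi_far}. I would import it directly from the single-front corrector lemma of \cite{HasaniPatrizi}, since the equation for $\bar\psi$ there is identical: only the scalar factor $\Lambda_i$ changes.
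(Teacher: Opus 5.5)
Your proposal is correct and is essentially the paper's proof. The paper also writes $\psi_i=\tilde\psi(\xi)\bigl(\mu-\bar a_\ep[d_i]-\sum_{j\neq i}P_\ep[d_j]\bigr)$ with one universal corrector $\tilde\psi$: existence and $C^{1,\beta}$ regularity come from \cite{DipierroFigalliValdinoci}*{Theorem 9.1}, and the decay $|\tilde\psi|,|\dot{\tilde\psi}|\le C/(1+|\xi|^{2s})$ from \cite[Lemma 3.2]{MonneauPatrizi2}. It then reads off all four bounds from Lemmas~\ref{lem:a_estimates} and \ref{lem:Pep_estimates}, \eqref{def:mu_definition} and \eqref{mu_estimates}, exactly as you do. Your remark that $\mu\equiv\sigma$ on $\{|d_i|<\delta\}$, so $\nabla_x\mu=\partial_t\mu=0$ there, is the implicit reason \eqref{partial_psi_near} carries no $\delta^{-2s-1}$ term.

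One caution on the Fredholm sketch for existence of $\bar\psi$: it is not literally available in an $L^2$/$H^s$ framework for small $s$. In general $G$ and $\bar\psi$ decay only like $|\xi|^{-2s}$, so neither lies in $L^2(\R)$ when $s\le\frac14$. Quoting the existence result, as the paper does and as you ultimately propose, is the right way to close that step.
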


\begin{proof}
Under assumptions \eqref{eq:W} on the potential $W$, it is shown in \cite{DipierroFigalliValdinoci}*{Theorem 9.1} that there exists a function $\tilde\psi=\tilde\psi(\xi)\in C^{1,\beta}(\R)$, for some $\beta\in(0,1)$, solving
\[
\begin{cases}
\displaystyle\mathcal{L}[\tilde\psi](\xi) = c_0\,\dot{\phi}(\xi) + \frac{W''(\phi(\xi))-W''(0)}{W''(0)}, & \xi\in\R,\\
\tilde\psi(\pm\infty)=0.
\end{cases}
\]
Moreover, \cite[Lemma 3.2]{MonneauPatrizi2} shows that there exists a constant $C>0$ such that
\begin{equation}\label{tildepsiest}
|\tilde\psi(\xi)|,\,|\dot{\tilde\psi}(\xi)|\le \frac{C}{1+|\xi|^{2s}}\quad\text{for all }\xi\in\R.
\end{equation}
For each $i\in\{1,\ldots,N\}$, we define
\[
\psi_i(\xi;t,x):=\tilde\psi(\xi)\left(\mu[d_1,\dots,d_N](t,x)-\bar{a}_\ep[d_i](t,x)-\sum_{j\neq i}P_\ep[d_j](t,x)\right).
\]
By the linearity of $\mathcal{L}$, it follows that $\psi_i\in C_\xi^{1,\beta}(\R)$ solves \eqref{eq:linearized wave}. The bounds \eqref{psi_near}--\eqref{partial_psi_far} follow from \eqref{tildepsiest}, together with the estimates on  $\bar{a}_\ep$ in Lemma~\ref{lem:a_estimates}, the estimates on $P_\ep$ in Lemma~\ref{lem:Pep_estimates}, the definition of $\mu$ in \eqref{def:mu_definition},  and the derivative bounds \eqref{mu_estimates}.
\end{proof}

We conclude this section with the following estimate for the difference between the $n$- and the $1$-dimensional fractional Laplacians for the function $\psi_i$. 
\begin{lem}\label{lem:ae psi estimate}
Let $\tau$  be chosen as in Lemmas~\ref{lem:a_estimates} and \ref{lem:Pep_estimates} in Definition \ref{defn:extension}, and  assume $\ep/\delta^2=o_\ep(1)$. Then  for all $(t,x) \in [0,T] \times \mathbb{R}^n$, and $i \in \{1, \ldots, N\}$, 
 \begin{align*}
\abs{\ep^{2s} \mathcal{I}_n^s\left[ \psi_i \( \frac{d_i(t,\cdot)}{\ep};t,\cdot\) \right](x) - C_{n,s} \mathcal{I}_1^s[\psi_i\(\cdot;t,x\)]\(\frac{d_i(t,x)}{\ep}\)}= Ro_\ep(1).
\end{align*}
\end{lem}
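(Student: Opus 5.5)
The plan is to exploit the product structure $\psi_i(\xi;t,x)=\tilde\psi(\xi)A_i(t,x)$ from the proof of Lemma~\ref{lem:psi-reg}, where
\[
A_i:=\mu[d_1,\dots,d_N]-\bar a_\ep[d_i]-\sum_{j\neq i}P_\ep[d_j].
\]
Fix $(t,x)$ and write the difference as
\[
\ep^{2s}\int_{\R^n}\Big(\tilde\psi\big(\tfrac{d_i(t,x+z)}{\ep}\big)A_i(t,x+z)-\tilde\psi\big(\tfrac{d_i(t,x)}{\ep}\big)A_i(t,x)\Big)\frac{dz}{|z|^{n+2s}}-C_{n,s}A_i(t,x)\,\mathcal{I}^s_1[\tilde\psi]\big(\tfrac{d_i(t,x)}{\ep}\big).
\]
Adding and subtracting $\tilde\psi(d_i(t,x+z)/\ep)A_i(t,x)$ splits this into two terms:
\[
\mathrm{I}=A_i(t,x)\Big(\ep^{2s}\mathcal{I}^s_n\big[\tilde\psi\big(\tfrac{d_i}{\ep}\big)\big](x)-C_{n,s}\mathcal{I}^s_1[\tilde\psi]\big(\tfrac{d_i}{\ep}\big)\Big),
\]
and a commutator-type term
\[
\mathrm{II}=\ep^{2s}\int\tilde\psi\big(\tfrac{d_i(t,x+z)}{\ep}\big)\big(A_i(t,x+z)-A_i(t,x)\big)\frac{dz}{|z|^{n+2s}}.
\]

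For $\mathrm{I}$, note that $A_i$ is bounded by $C\sigma\delta^{-2s}+C\le C\delta^{-2s}$ (Lemmas~\ref{lem:a_estimates} and \ref{lem:Pep_estimates}). The bracket is exactly the $\bar a_\ep$-type difference of Lemma~\ref{lem:a_ep and frac laplacians}, with $\phi$ replaced by $\tilde\psi$ and rescaled by $\ep^{2s}$. By Lemma~\ref{lem:1 to n facrional Laplacian}, after the change of variables $z\mapsto\ep z$, it equals
\[
\int_{\R^n}\Big(\tilde\psi\big(\tfrac{d_i(x+\ep z)}{\ep}\big)-\tilde\psi\big(\tfrac{d_i(x)}{\ep}+\nabla d_i(x)\cdot z\big)\Big)\frac{dz}{|z|^{n+2s}}+\big(|\nabla d_i|^{2s}-1\big)\text{-term}.
\]
Split at $|z|<r/\ep$ and $|z|>r/\ep$. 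In the inner region, Taylor expansion gives $|d_i(x+\ep z)/\ep-d_i(x)/\ep-\nabla d_i\cdot z|\le C\ep|z|^2/\tau$. This uses $\|\dot{\tilde\psi}\|_\infty$, and in the region where $d_i$ is a true distance the curvature is bounded independently of $\tau$. The resulting contribution is $C\ep\int_{|z|<r/\ep}|z|^{1-n-2s}\,dz\sim\ep^{2s}r^{2-2s}$-type, hence $o_\ep(1)$. The outer region is bounded by $C\ep^{2s}r^{-2s}\|\tilde\psi\|_\infty$. The gradient-norm term is handled as in the definition of $\bar c_\ep$, since $|\nabla d_i|=1$ near $\Gamma^i_t$. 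Far from $\Gamma^i_t$, where $|\nabla d_i|\ne1$, I will use instead the decay \eqref{tildepsiest}: both $\tilde\psi(d_i/\ep)$ and its one-dimensional Laplacian are $O(\ep^{2s}/\delta^{2s})$ there. Choosing $r$ depending on $\ep$ and using $\ep/\delta^2=o_\ep(1)$ absorbs the $\delta^{-2s}$ prefactor, so $\mathrm{I}=Ro_\ep(1)$. The factor $R$ enters through the $\bar a_\ep$ bounds.

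For $\mathrm{II}$, I split again at $|z|<\rho_\ep$. In the near region, the gradient bounds on $A_i$ apply: $\ep^{-1}o_\ep(1)R$ from Lemmas~\ref{lem:a_estimates} and \ref{lem:Pep_estimates}, and $C\delta^{-2s-1}$ from \eqref{mu_estimates}. They give
\[
\ep^{2s}\big(\ep^{-1}o_\ep(1)R+\delta^{-2s-1}\big)\rho_\ep^{1-2s}.
\]
In the far region, boundedness of $A_i$ gives $\ep^{2s}\delta^{-2s}\rho_\ep^{-2s}$. Choosing $\rho_\ep=\ep$ gives $o_\ep(1)R+\ep\delta^{-2s-1}+\delta^{-2s}$. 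The last term is not small, so instead I take $\rho_\ep=\ep^{1/2}$-like and balance the two contributions using $\ep/\delta^2\to0$; in fact $\ep^{2s}\delta^{-2s}\rho^{-2s}=(\ep/(\delta\rho))^{2s}$, which is small whenever $\rho\gg\ep/\delta$.

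The main obstacle is making the $\ep^{-1}o_\ep(1)R$ gradient term in $\mathrm{II}$ compatible with $\rho\gg\ep/\delta$. It gives $\ep^{2s-1}o_\ep(1)R\rho^{1-2s}$, which is small only if $\rho\lesssim\ep$ times a factor determined by the unspecified rate in $o_\ep(1)$. To handle this, I would first go back into the proofs of those derivative bounds, where the rate is quantified through $\tau_\ep$, and choose $\rho$ between $\ep/\delta$ and the scale allowed by that rate. Failing that, I would use the finer structure of $\bar a_\ep$ and $P_\ep$: both are bounded, so instead of the Lipschitz bound I would estimate their oscillation $|A_i(x+z)-A_i(x)|$ over $|z|<\rho$ by $o_\ep(1)$ directly. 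This works because they approximate continuous limits, namely $\kappa$ and $P$, uniformly on the relevant sets.
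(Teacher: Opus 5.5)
The gap is in the commutator term $\mathrm{II}$. You leave it open, and neither of your proposed repairs closes it. The splitting into $\mathrm{I}$ and $\mathrm{II}$ is the natural one, and $\mathrm{I}$ is essentially fine apart from one exponent slip noted at the end.

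Write $|\nabla_x A_i|\le \ep^{-1}\lambda_\ep R+C\delta^{-2s-1}$ with $\lambda_\ep\to0$.
\begin{itemize}
\item Repair (a) needs a $\rho$ with $\ep/\delta\ll\rho\ll\ep\,\lambda_\ep^{-1/(1-2s)}$. Such a $\rho$ exists only if $\delta\gg\lambda_\ep^{1/(1-2s)}$. That is a relation between $\delta$ and an unquantified rate, and the hypothesis $\ep/\delta^2=o_\ep(1)$ does not provide it.
\item Repair (b) cannot replace the Lipschitz bound. The kernel $|z|^{-n-2s}$ is not integrable at the origin, so an $o_\ep(1)$ bound on $|A_i(x+z)-A_i(x)|$ does not control $\ep^{2s}\int_{\{|z|<\rho\}}|A_i(x+z)-A_i(x)|\,|z|^{-n-2s}\,dz$.
\item Moreover, nothing in the paper gives $\bar a_\ep$ a modulus of continuity uniform in $\ep$. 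The comparison with $\kappa$ holds only for $|d_i|<\delta$ and carries an $O(R^{-2s})$ error. Away from $\Gamma^i_t$, $\bar a_\ep[d_i]$ is built from an extension with $|D^2d_i|=O(\tau^{-1})$.
\end{itemize}

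The requirement $\rho\gg\ep/\delta$ is an artifact of bounding $|A_i|\le C\delta^{-2s}$ separately from $\tilde\psi$. Keep the two factors paired:
\begin{itemize}
\item If $|d_i(y)|\le\delta$, then $\mu(y)=\sigma$ and $|A_i(y)|\le C$.
\item If $|d_i(y)|>\delta$, then $|\tilde\psi(d_i(y)/\ep)|\le C(\ep/\delta)^{2s}$.
\end{itemize}
Hence $|\tilde\psi(d_i(y)/\ep)A_i(y)|\le C$ for every $y$. This is exactly the information in \eqref{psi_near}--\eqref{psi_far} evaluated at $\xi=d_i(y)/\ep$, the bounds on which the paper's one-line proof relies. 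Then argue by cases.

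\textbf{Case $|d_i(x)|<\delta$.} Here $|A_i(x)|\le C$. The part of $\mathrm{II}$ over $\{|z|>\rho\}$ is at most $C(\ep/\rho)^{2s}$. The part over $\{|z|<\rho\}$ is at most $C(\rho/\ep)^{1-2s}(\lambda_\ep R+\ep\delta^{-2s-1})$, and note $\ep\delta^{-2s-1}\le\ep/\delta^2\to0$. Choosing $(\rho/\ep)^{1-2s}=(\lambda_\ep+\ep\delta^{-2s-1})^{-1/2}$ makes both parts $R\,o_\ep(1)$, with no knowledge of the rate $\lambda_\ep$.

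\textbf{Case $|d_i(x)|\ge\delta$.} For $|z|\le c\delta$ one has $|d_i(x+z)|\ge\delta/2$, so the factor $\tilde\psi(d_i(x+z)/\ep)=O((\ep/\delta)^{2s})$. This factor multiplies both the Lipschitz bound and $|A_i|\le C\delta^{-2s}$. The region $\{|z|>c\delta\}$ costs $C\ep^{2s}\delta^{-4s}$. Taking $\rho=\ep$, the total is $O\big((\ep/\delta)^{2s}(\lambda_\ep R+\ep\delta^{-2s-1})\big)+O\big((\ep/\delta^2)^{2s}\big)$.

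Finally, the slip in $\mathrm{I}$. The inner integrand is $|z|^{2-n-2s}$, not $|z|^{1-n-2s}$, so the inner bound is $C\ep^{2s-1}r^{2-2s}$. This is not automatically small when $s<\tfrac12$. Balanced against the outer term $C\delta^{-2s}(\ep/r)^{2s}$, the choice $r=\ep^{1/2}$ makes both terms $O\big((\ep/\delta^2)^{s}\big)$. This is where the hypothesis $\ep/\delta^2=o_\ep(1)$ enters in $\mathrm{I}$.
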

\begin{proof}
The proof follows the same argument as in \cite[Lemma 5.3]{HasaniPatrizi}, using the estimates on $\psi_i$ provided by Lemma \ref{lem:psi-reg}. 
\end{proof}

\section{Constructions of barriers}\label{barriersection}
In this section, we construct barriers, namely subsolutions and supersolutions to \eqref{eq:pde} and  \eqref{initial_data}. We focus on the construction of subsolutions, as supersolutions can be obtained in a similar way.
 \subsection{Subsolutions to \eqref{eq:pde}}
 
Let $\Omega_t^i$, $i=1,\ldots N$, $t\in [0,T]$,  be a family of smooth bounded domains satisfying the nesting condition \eqref{nestingassumption}.
Let $\tilde{d}_i(t,x)$ be the signed distance function associated to the set  $ \Omega^i_t $, then  $\Gamma_t^i =  \{ x \in \R^n : \tilde{d}_i(t,x) = 0\}$.
Assume  that there exists  a $\rho>0$ such that, for all $1 \leq i \leq N$, $\tilde{d}_i(t,x)$ is smooth in the set
\begin{equation*}\label{eq:Q2rho}
Q_{3\rho}^i: = \{ (t,x) \in[0,T]\times\R^n: |\tilde{d}_i(t,x)| < 3\rho\},
\end{equation*}
and let $d_i$ be the smooth extension of $\tilde{d}_i$ given  in Definition \ref{defn:extension}, with $\tau$ chosen as in Lemmas~\ref{lem:a_estimates} and \ref{lem:Pep_estimates}. Assume in addition that there exists $\sigma>0$ such that
\begin{equation}\label{eq:fmc for d}
\partial_t d_i
\leq
c_0\!\Bigg(
\kappa\!\big[x, d_i(t,\cdot)\big]
+
\sum_{j\neq i}
P[d_j]
-
\sigma
\Bigg)
\quad \text{in } Q_{2\tilde\sigma},
\end{equation}
 where for $\kappa[x,d]$ is the fractional mean curvature operator defined in \eqref{kappade}, $P[d]$ is defined by  \eqref{PepsimP_heur}, and $\tilde\sigma>0$ is defined by 
 \begin{equation}\label{alphasigma}\alpha:= W''(0),\quad \tilde{\sigma}:=\frac{\sigma}{\alpha}.\end{equation}
By \eqref{nestingassumption}, there exists $\ell_0>0$ such that condition \eqref{Qisigmaemptyimters} holds  for all $t\in[0,T]$. 
Possibly decreasing $\sigma$, we may assume  $2\tilde\sigma<\min\{\rho,\ell_0,1\}$. We define the smooth barrier $v^{\ep}(t,x)$ by
\begin{equation}\label{eq:barrier defn}
\begin{split}
v^{\ep}(t,x) =&\sum_{i=1}^N \phi\left( \frac{d_i(t,x)- \tilde{\sigma}^N }{\ep}\right) +\ep^{2s}\sum_{i=1}^N  \psi_i \left( \frac{d_i(t,x)-\tilde{\sigma}^N}{\ep};t,x\right)
\\
&+ \frac{\ep^{2s}}{\alpha} \sum_{i=1}^N \bar{a}_\ep\left[d_i - \tilde{\sigma}^N \right](t,x) - \frac{\ep^{2s}}{\alpha}\,\mu[d_1-\tilde{\sigma}^N,\dots,d_N-\tilde{\sigma}^N](t,x),
\end{split}
\end{equation}
where $\phi$ is the solution of \eqref{eq:standing wave},  $\psi_i$ and $\mu$  solve \eqref{eq:linearized wave} and \eqref{def:mu_definition} respectively  with the signed distance functions $d_1-\tilde\sigma^N,\dots,d_N-\tilde\sigma^N$. 
Recall that  $\psi_i$ also depends on $\ep$, $R$, $\delta$, and $\sigma$ through the functions $\bar a_\ep$, $P_\ep$  and $\mu$. The parameter $\sigma>0$ is chosen as in \eqref{eq:fmc for d}, and we assume the following condition on $\delta$:
\begin{equation}\label{delta:def} \delta=o_\ep(1),\quad\frac{\ep}{\delta^2}=o_\ep(1).
\end{equation}
\begin{lem}[Subsolutions to \eqref{eq:pde}] \label{lem:barrier}
 Assume \eqref{eq:fmc for d} with $c_0$ as in \eqref{def:c0}, and \eqref{Qisigmaemptyimters}. Let $v^{\ep}$ be defined as in \eqref{eq:barrier defn}
 with $0<2\tilde\sigma<\min\{\rho,\ell_0,1\}$, $R >1$, and $\delta=\delta(\ep)$ satisfying \eqref{delta:def}.
  Then there exists $R_0=R_0(\sigma)$ and $\ep_0=\ep_0(\sigma)>0$ such that for all $R>R_0$ and $0<\ep<\ep_0$, $v^{\ep}$ satisfies 
\begin{equation}\label{eq:pde sub}
\ep \partial_t v^{\ep} - \mathcal{I}_n^s[v^{\ep} ] +\frac{1}{\ep^{2s}} W'(v^{\ep} ) \leq -\frac{\sigma}{4} \quad \hbox{in}~(0,T) \times \R^n.
\end{equation}
\end{lem}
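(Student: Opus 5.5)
We plug the barrier \eqref{eq:barrier defn} directly into the left-hand side of \eqref{eq:pde sub} and split $\R^n$ into regions according to the shifted distances $\bar d_i:=d_i-\tilde\sigma^N$. To simplify notation, write $\xi_i:=\bar d_i/\ep$, $\bar a_i:=\bar a_\ep[\bar d_i]$, $P_i:=P_\ep[\bar d_i]$ and $\mu:=\mu[\bar d_1,\dots,\bar d_N]$.

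We first expand each term of the equation.

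For the time derivative, $\ep\partial_t v^\ep=\sum_i\dot\phi(\xi_i)\partial_t d_i+\ep^{2s}\sum_i(\dot\psi_i\partial_t d_i+\ep\partial_t\psi_i)+\frac{\ep^{1+2s}}{\alpha}\partial_t(\sum_i\bar a_i-\mu)$. By Lemmas \ref{lem:psi-reg}, \ref{lem:a_estimates} and \eqref{mu_estimates}, everything except the first sum is $o_\ep(1)R+C\ep^{1+2s}\delta^{-2s-1}$, which is $o_\ep(1)$ for fixed $R$ by \eqref{delta:def}.

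For the nonlocal term, we apply Lemma \ref{lem:a_ep and frac laplacians} to each $\phi(\xi_i)$ and Lemma \ref{lem:ae psi estimate} to each $\psi_i$. Lemmas \ref{lem:a_estimates} and \ref{lem:mu_estimates} give $\ep^{2s}\mathcal I^s_n[\bar a_i]=o_\ep(1)R$ and $\ep^{2s}\mathcal I^s_n\mu=O(\ep^{2s}\delta^{-4s})$. (This last bound needs $\ep/\delta^2\to0$, which is exactly \eqref{delta:def}.) We then use the profile equation \eqref{eq:standing wave}, so that
\[
\mathcal I^s_n[v^\ep]=\sum_i\Big(\bar a_i+\ep^{-2s}W'(\phi(\xi_i))+C_{n,s}\mathcal I^s_1[\psi_i](\xi_i)\Big)+O(R^{-2s})+o_\ep(1)R .
\]

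For the nonlinearity, fix a point and let $i$ be the index minimizing $|\bar d_i|$. By \eqref{Qisigmaemptyimters}, $|\bar d_j|\ge\ell_0/2$ for all $j\ne i$. Using periodicity of $W$, we write $v^\ep=\phi(\xi_i)+\sum_{j\neq i}\tilde\phi(\xi_j)+k+\ep^{2s}(\dots)$ with $k\in\Z$. Taylor expanding $W'$ to second order around $\phi(\xi_i)$, and using Lemma \ref{lem:asymptotics} for $\tilde\phi(\xi_j)=O(\ep^{2s})$, gives
\[
\ep^{-2s}W'(v^\ep)=\ep^{-2s}W'(\phi(\xi_i))+W''(\phi(\xi_i))\Big[\ep^{-2s}\sum_{j\ne i}\tilde\phi(\xi_j)+\textstyle\sum_j\psi_j+\alpha^{-1}(\textstyle\sum_j\bar a_j-\mu)\Big]+O(\ep^{2s}\delta^{-4s}).
\]
The error is controlled because $\mu\le\sigma\delta^{-2s}$ and $\ep^{2s}\delta^{-4s}\to0$. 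For $j\ne i$, the far-field bounds \eqref{psi_far} and Lemma \ref{lem:asymptotics} show that $\ep^{-2s}W'(\phi(\xi_j))-W''(0)\ep^{-2s}\tilde\phi(\xi_j)$, $C_{n,s}\mathcal I^s_1[\psi_j](\xi_j)$ and $\psi_j(\xi_j)$ are $o_\ep(1)$. This step is where the identity $P_j\simeq\bar a_j+\alpha\ep^{-2s}\tilde\phi(\xi_j)$ from the heuristics is made rigorous, via Lemma \ref{lem:Pep_estimates}. The other-front terms then recombine into $P_j$.

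Collecting terms, and using the corrector equation \eqref{eq:linearized wave} for $\psi_i$ to cancel $\mathcal L[\psi_i]$, the left-hand side of \eqref{eq:pde sub} reduces to
\[
\dot\phi(\xi_i)\Big[\partial_t d_i-c_0\big(\bar a_i+\textstyle\sum_{j\ne i}P_j-\mu\big)\Big]-\mu\frac{W''(\phi(\xi_i))}{\alpha}+\frac{W''(\phi(\xi_i))-\alpha}{\alpha}\Big(\bar a_i+\textstyle\sum_{j\neq i}P_j\Big)\Big(1-1\Big)+\mathcal E,
\]
where $\mathcal E=O(R^{-2s})+o_\ep(1)R+o_\delta(1)$. (The middle factor vanishes by the choice of the $w$-term.) More precisely, the residual is $-\mu+\dot\phi(\xi_i)(\partial_t d_i-c_0(\bar a_i+\sum_{j\ne i}P_j-\mu))+\mathcal E$, exactly as in \cite{HasaniPatrizi}.

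We now estimate the bracket by cases.

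Case (a): $|\bar d_i|\le\delta$. Here $\mu=\sigma$. Lemma \ref{thm: b_ep fmc} gives $\bar a_i=\kappa[x,\bar d_i]+o$, and \eqref{eq:interaction limitbis} gives $P_j=P[\bar d_j]+o_\ep(1)$. Since the shift is $\tilde\sigma^N<\tilde\sigma$, such points lie in $Q_{2\tilde\sigma}$ for $d_i$, so hypothesis \eqref{eq:fmc for d} makes the bracket $\le c_0(-\sigma+\sigma)+o=o$. (The shift by $\tilde\sigma^N$ only translates the level sets, and $\kappa$ and $P$ of the shifted functions agree with the hypothesis at those points up to $o$ terms, by the $C^1$ regularity in Proposition \ref{lem:kappa-neighborhood}.) The residual is then $\le-\sigma+o$.

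Case (b): $|\bar d_i|\ge\delta$. Here $\dot\phi(\xi_i)\le C(\ep/\delta)^{1+2s}$, so the bracket term is $O((\ep/\delta)^{1+2s}\delta^{-2s})=o_\ep(1)$, while $-\mu\le-\sigma$.

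Choosing $R_0$ with $CR_0^{-2s}<\sigma/8$ and then $\ep_0$ small gives the bound $\le-\sigma/4$.

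The main obstacle is the uniform handling of the other fronts in the nonlinearity. One must check that the factor $W''(\phi(\xi_i))$ multiplying $\ep^{-2s}\tilde\phi(\xi_j)$, which is of order one, is exactly compensated by $P_j$ and the $\bar a_j$ term. Where $W''(\phi(\xi_i))\ne\alpha$, the mismatch $(W''(\phi(\xi_i))-\alpha)\alpha^{-1}\sum_{j\ne i}P_j$ must be absorbed by the corresponding piece of the corrector equation. I would first verify this cancellation algebraically near each front, and then check it in the region far from all fronts, where $\bar d_i$ is only the smoothed extension and $\mu=\sigma\delta^{-2s}$ dominates.
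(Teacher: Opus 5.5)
Your architecture matches the paper's. You expand the three terms around the nearest front $i$ and cancel $\mathcal{L}[\psi_i]$ with the corrector equation. You recombine the other-front contributions using $\mathcal{I}^s_n[\phi_j]=P^j_\ep$ (where $g=1$), Lemma~\ref{lem:a_ep and frac laplacians}, and $W'(\phi_j)=\alpha\tilde\phi_j+O(\tilde\phi_j^2)$. You reach the residual $-\mu+\dot\phi_i\bigl[\partial_t d_i-c_0\bigl(\bar a^i_\ep+\sum_{j\ne i}P^j_\ep-\mu\bigr)\bigr]+\mathcal{E}$, and split into $|d_i-\tilde\sigma^N|<\delta$ and $|d_i-\tilde\sigma^N|\ge\delta$. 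The cancellation you flag as the ``main obstacle'' does go through this way. Your first displayed residual, with coefficient $-W''(\phi(\xi_i))/\alpha$ in front of $\mu$, is wrong, since that coefficient changes sign inside the layer. The corrected form you state right after is the right one.

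The genuine gap is in Case (a). Hypothesis \eqref{eq:fmc for d} is written with $P[d_j]$, whereas your bracket contains $P^j_\ep=P_\ep[d_j-\tilde\sigma^N]=P[d_j-\tilde\sigma^N]+o_\ep(1)$.

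For the curvature this is harmless: $\kappa[x,d-c]=\kappa[x,d]$ exactly, because only the level set through $x$ enters. The interaction term is different. $P[d_j-\tilde\sigma^N](t,x)$ and $P[d_j](t,x)$ integrate the kernel over different sets. Their difference is the kernel integrated over the strip $\{|d_j(t,x+z)|\le\tilde\sigma^N\}$, which lies at distance of order $\ell_0$ from $x$. This difference is of size $\tilde\sigma^N$, independent of $\ep,\delta,R$, so it is not an $o$-term. Proposition~\ref{lem:kappa-neighborhood}, which concerns regularity of $\kappa$, says nothing about it. Consequently you only get
\[
\ep\partial_t v^\ep-\mathcal{I}^s_n[v^\ep]+\ep^{-2s}W'(v^\ep)\le-\sigma+C\sigma^N+Ro_\ep(1)+O(R^{-2s}).
\]
To conclude, you must additionally take $\sigma$ small enough that $C\sigma^N\le\sigma/2$. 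This is possible only because $N\ge2$; for $N=1$ the interaction term is absent.

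This is exactly why the barrier is shifted by $\tilde\sigma^N$ rather than $\tilde\sigma$. The paper proves the bound \eqref{Plipschitz} and imposes this extra smallness of $\sigma$ before fixing $R_0$ and $\ep_0$. Your argument as written never uses the exponent $N$, and it would claim the estimate without this restriction, which does not follow.
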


\begin{proof}
For convenience, we use the following notation throughout the proof:
\begin{equation}\label{eq:notation}
\begin{aligned}
\phi_i &:= \phi \( \frac{d_i(t,x) - \tilde{\sigma}^N}{\ep}\)\\
 \psi_i &:= \psi_i \( \frac{d_i(t,x) - \tilde{\sigma}^N}{\ep};t,x\)\\
 \tilde{\phi}_i &:= \phi \( \frac{d_i(t,x) - \tilde{\sigma}^N}{\ep}\)- H \( \frac{d_i(t,x) - \tilde{\sigma}^N}{\ep}\)\\
\bar{a}_{\ep}^i
	&:= \bar{a}_{\ep}[d_i- \tilde{\sigma}^N](t,x)\\
\mu &:= \mu[d_1-\tilde{\sigma}^N,\dots,d_N-\tilde{\sigma}^N](t,x)\\
P_\ep^i & := P_\ep[d_i-\tilde{\sigma}^N](t,x)
\end{aligned}
\end{equation}
We note that it will be important for the reader to remember the dependence of $\psi_i$ on the variables $t,x$ and $(d_i(t,x) - \tilde{\sigma}^N)/\ep$ when taking derivatives in $t,x$. With this notation, the barrier \eqref{eq:barrier defn} can be written as
$$v^{\ep}(t,x) = \sum_{i=1}^N \phi_i + \ep^{2s}\sum_{i=1}^N \psi_i + \frac{\ep^{2s}}{\alpha} \sum_{i=1}^N \bar{a}_\ep^i - \frac{\ep^{2s}}{\alpha}\mu.$$
We begin by computing the time derivative of $v^\ep$ at $(t,x)$:
\begin{align*}
\ep \partial_t v^{\ep}(t,x)
	&=  \sum_{i=1}^N  \dot{\phi}_i \, \partial_td_i(t,x)
	 + \ep^{2s} \sum_{i=1}^N \left[\dot\psi_i \partial_t d_i(t,x)
	 +\ep \partial_t\psi_i\right] + \frac{\ep^{2s+1}}{\alpha} \sum_{i=1}^N \partial_t \bar{a}_\ep^i-\frac{\ep^{2s+1}}{\alpha}\partial_t\mu.
\end{align*}
By Lemmas \ref{lem:a_estimates} and \ref{lem:psi-reg}, and the derivative estimate \eqref{mu_estimates}, we have that for each $i=1, \ldots, N$,
\begin{align*}
 \ep^{2s} \dot{\psi_i} \partial_td_i(t,x) + \ep^{2s+1} \partial_t \psi_i + \frac{\ep^{2s+1}}{\alpha} \partial_t \bar{a}_\ep^i-\frac{\ep^{2s+1}}{\alpha}\partial_t\mu &= O\(\frac{\ep^{2s}}{\delta^{2s}}\)+
 \ep^{2s}Ro_\ep(1)+O\(\frac{\ep^{2s+1}}{\delta^{2s+1}}\)\\&
 =Ro_\ep(1),
\end{align*}
where we used that  $\ep/\delta^2=o_\ep(1)$ in the last equality. Therefore,
we have
\begin{equation}\label{eq:time_derivative_of_v_ep'}
\ep \partial_t v^\ep (t,x)= \sum_{i=1}^N \dot{\phi}_i \partial_t d_i (t,x)+Ro_\ep(1).
\end{equation}
Next, we consider the nonlocal term. We compute
\begin{align*}
\mathcal{I}_n^s[v^\ep(t,\cdot)](x) =& \sum_{i=1}^N \left( \mathcal{I}_n^s\left[\phi_i  \right] + \ep^{2s} \mathcal{I}_n^s \left[ \psi_i\right] + \frac{\ep^{2s}}{\alpha} \mathcal{I}_n^s[\bar{a}_\ep^i](x) \right)-\frac{\ep^{2s}}{\alpha}\mathcal{I}_n^s[\mu].
\end{align*}
Fix an index $i_0 \in \{1,\ldots,N\}$.
Using that $\phi$ satisfies \eqref{eq:standing wave} and applying Lemma~\ref{lem:a_ep and frac laplacians}, we obtain
\begin{align*}
\mathcal{I}_n^s\left[\phi_{i_0} \right]&=  \mathcal{I}_n^s\left[\phi_{i_0}\right]
 - \frac{C_{n,s}}{\ep^{2s}} \mathcal{I}_1^s[\phi_{i_0}]\left( \frac{d_{i_0}(t,x) -\tilde{\sigma}^N}{\ep} \right) +\frac{C_{n,s}}{\ep^{2s}} \mathcal{I}_1^s[\phi_{i_0}]\left( \frac{d_{i_0}(t,x) -\tilde{\sigma}^N}{\ep} \right) \\
& = \bar a_\ep^{i_0}  + \frac{1}{\ep^{2s}}W'(\phi_{i_0})+ O(R^{-2s}) +  o_\ep(1).
\end{align*}
Assume
\begin{equation}\label{d_iconditions_lemmasub}
|d_i(t,x)-\tilde\sigma^N|\geq\ell_0\qquad\text{for }i\neq i_0.
\end{equation}
Then  by the definition of $ P^i_\ep$ in  \eqref{eq:P_ep}, we have
\begin{equation}\label{d_iconditions_lemmasub_bis}
 \mathcal{I}_n^s[\phi_i](x) = P^i_\ep\qquad\text{for }i\neq i_0.
 \end{equation}
 Combining this with the above identity, we obtain
\begin{align*}
\sum_{i=1}^N  \mathcal{I}_n^s\left[\phi_i  \right](x) = \bar a_\ep^{i_0} +\sum_{i\neq i_0}P^i_\ep + \frac{1}{\ep^{2s}}W'(\phi_{i_0})+ O(R^{-2s}) +  o_\ep(1).
\end{align*}
 Next, recalling \eqref{eq:linearized operator}, that
$\alpha = W''(0)$, and using that $\psi_i$ solves \eqref{eq:linearized wave}, we compute
\begin{align*}
\ep^{2s}\mathcal{I}_n^s[\psi_i] =&\ep^{2s}\mathcal{I}_n^s[\psi_i] - C_{n,s}\mathcal{I}^s_1[\psi_i]\left( \frac{d_i(t,x) - \tilde{\sigma}^N}{\ep} \right) + {W}''(\phi_i) \psi_i - \mathcal{L}[\psi_i]\left( \frac{d_i(t,x) - \tilde{\sigma}^N}{\ep} \right)\\
=& \ep^{2s}\mathcal{I}_n^s[\psi_i](x) - C_{n,s}\mathcal{I}^s_1[\psi_i]\left( \frac{d_i(t,x) - \tilde{\sigma}^N}{\ep} \right) + {W}''(\phi_i) \psi_i
\\&- \left(c_0 \dot{\phi_i} + \frac{W''\left( \phi_i \right) }{\alpha} -1\right)  \left( \mu-\bar{a}_{\ep}^i - \sum_{j\neq i}P^j_\ep \right).
\end{align*}
Since $\ep/\delta^2=o_\ep(1)$, we may apply Lemma \ref{lem:ae psi estimate} and thereby obtain
\begin{align*}
\ep^{2s}\mathcal{I}_n^s[\psi_i](x) =  {W}''(\phi_i) \psi_i -  \left(c_0 \dot{\phi_i} + \frac{W''\left( \phi_i \right)}{\alpha}-1 \right)  \left(\mu-\bar{a}_{\ep}^i - \sum_{j\neq i}P^j_\ep\right)+ Ro_\ep(1).
\end{align*}
Moreover, by Lemma \ref{lem:a_estimates},
\begin{align*}
\ep^{2s}\mathcal{I}_n^s[\bar{a}_\ep^i](x) = Ro_\ep(1).
\end{align*}
Finally, by Lemma \ref{lem:mu_estimates} and the fact that $\ep/\delta^2=o_\ep(1)$,
\begin{align*}
\ep^{2s}\mathcal{I}_n^s[\mu](x) = O\(\frac{\ep^{2s}}{\delta^{4s}}\)=o_\ep(1).
\end{align*}
Therefore, the fractional Laplacian of $v^\ep$ can be written as
\begin{equation*}
\begin{split}
 \mathcal{I}_n^s[v^\ep(t,\cdot)](x) &= \bar a_\ep^{i_0} +\sum_{i\neq i_0}P^i_\ep + \frac{1}{\ep^{2s}}W'(\phi_{i_0})
 \\&\quad+   {W}''(\phi_{i_0}) \psi_{i_0}-  \left(c_0 \dot{\phi}_{i_0} + \frac{W''\left( \phi_{i_0} \right)}{\alpha} -1\right)  \left(\mu-\bar{a}_{\ep}^{i_0} - \sum_{i\neq i_0}P^i_\ep\right)
\\&\quad+ \sum_{i\neq i_0}\left\{   {W}''(\phi_i) \psi_i -  \left(c_0 \dot{\phi}_i + \frac{W''\left( \phi_i \right)}{\alpha} -1\right)  \left(\mu-\bar{a}_{\ep}^i - \sum_{j\neq i}P^j_\ep\right) \right\} \\& \quad+  O(R^{-2s}) +Ro_\ep(1).
\end{split}
\end{equation*}
Simplifying the terms $ \bar a_\ep^{i_0}$ and  $\sum_{i\neq i_0}P_\ep^i$, we obtain
\begin{equation}\label{eq:fractional_Laplacian_of_v_ep'}
\begin{split}
 \mathcal{I}_n^s[v^\ep(t,\cdot)](x) &=\frac{1}{\ep^{2s}}W'(\phi_{i_0})+   {W}''(\phi_{i_0}) \psi_{i_0}-  \left(c_0 \dot{\phi}_{i_0} + \frac{W''\left( \phi_{i_0} \right)}{\alpha} \right)  \left(\mu-\bar{a}_{\ep}^{i_0} - \sum_{i\neq i_0}P^i_\ep\right)+\mu
\\&\quad+ \sum_{i\neq i_0}\left\{   {W}''(\phi_i) \psi_i -  \left(c_0 \dot{\phi}_i + \frac{W''\left( \phi_i \right)}{\alpha} -1\right)  \left(\mu-\bar{a}_{\ep}^i - \sum_{j\neq i}P^j_\ep\right) \right\} \\& \quad+  O(R^{-2s}) +Ro_\ep(1).
\end{split}
\end{equation}
Next, we compute $W'(v^\ep(t,x))$.
Performing a Taylor expansion of $W'$ around $\phi_{i_0}$ and using the periodicity of $W$, we obtain
\begin{equation}\label{eq:Wprimeforbarrier'}
\begin{aligned}
&W'\left(\sum_{i=1}^N \phi_i + \ep^{2s}  \sum_{i=1}^N \psi_i + \frac{\ep^{2s}}{\alpha} \sum_{i=1}^N \bar{a}_\ep^i - \frac{\ep^{2s}}{\alpha}\mu\right)
\\& =W'\left(\phi_{i_0}+\sum_{i\neq i_0}^N \tilde\phi_i + \ep^{2s}  \sum_{i=1}^N \psi_i + \frac{\ep^{2s}}{\alpha} \sum_{i=1}^N \bar{a}_\ep^i - \frac{\ep^{2s}}{\alpha}\mu\right)\\
	& =W'(\phi_{i_0}) + W''(\phi_{i_0})\(\sum_{i\neq i_0}^N \tilde\phi_i + \ep^{2s}  \sum_{i=1}^N \psi_i + \frac{\ep^{2s}}{\alpha} \sum_{i=1}^N \bar{a}_\ep^i - \frac{\ep^{2s}}{\alpha}\mu\) \\
	&\quad+ O\(\(\sum_{i\neq i_0}^N \tilde\phi_i + \ep^{2s}  \sum_{i=1}^N \psi_i + \frac{\ep^{2s}}{\alpha} \sum_{i=1}^N \bar{a}_\ep^i - \frac{\ep^{2s}}{\alpha}\mu\)^2\).
\end{aligned}
\end{equation}
Using the estimates for $\bar a_\ep$ and $\psi_i$ provided by
Lemmas~\ref{lem:a_estimates} and \ref{lem:psi-reg}, and recalling that $0\le \mu\le \sigma/\delta^{2s}$ by \eqref{def:mu_definition}, we deduce
\begin{align*}
 \frac{1}{\ep^{2s}} O&\(\(\sum_{i\neq i_0}^N \tilde\phi_i + \ep^{2s}  \sum_{i=1}^N \psi_i + \frac{\ep^{2s}}{\alpha} \sum_{i=1}^N \bar{a}_\ep^i - \frac{\ep^{2s}}{\alpha}\mu\)^2\)=   \frac{1}{\ep^{2s}}\sum_{i\not= i_0} O\(\tilde{\phi}_i\)^2
		 + O\(\frac{\ep^{2s}}{\delta^{4s}}\)+o_\ep(1)\\
&=\frac{1}{\ep^{2s}}\sum_{i\not= i_0} O\(\tilde{\phi}_i\)^2+o_\ep(1),
\end{align*}
where we used again that $\ep/\delta^2=o_\ep(1)$.
 Combining this last estimate with  \eqref{eq:time_derivative_of_v_ep'},  \eqref{eq:fractional_Laplacian_of_v_ep'} and \eqref{eq:Wprimeforbarrier'}, we obtain
\begin{align*}
\mathcal{J}[v^\ep]
:= &\ep \partial_t v^{\ep}(t,x) - \mathcal{I}^s_n [v^{\ep}(t,\cdot)](x)  +\frac{1}{\ep^{2s}}  W'(v^\ep(t,x))\\
= & \sum_{i=1}^N\dot{\phi}_i \partial_td_i(t,x) \\
&-\frac{1}{\ep^{2s}}W'(\phi_{i_0})-   {W}''(\phi_{i_0}) \psi_{i_0}+  \left(c_0 \dot{\phi}_{i_0} + \frac{W''\left( \phi_{i_0} \right)}{\alpha} \right)  \left(\mu-\bar{a}_{\ep}^{i_0} - \sum_{i\neq i_0}P^i_\ep\right)-\mu
\\&- \sum_{i\neq i_0}\left\{   {W}''(\phi_i) \psi_i -  \left(c_0 \dot{\phi}_i + \frac{W''\left( \phi_i \right)}{\alpha} -1\right)  \left(\mu-\bar{a}_{\ep}^i - \sum_{j\neq i}P^j_\ep\right) \right\} \\
	&+\frac{W'(\phi_{i_0})}{\ep^{2s}} + \frac{W''(\phi_{i_0})}{\ep^{2s}}\(\sum_{i\neq i_0}^N \tilde\phi_i + \ep^{2s}  \sum_{i=1}^N \psi_i + \frac{\ep^{2s}}{\alpha} \sum_{i=1}^N \bar{a}_\ep^i - \frac{\ep^{2s}}{\alpha}\mu\) \\
	&+ Ro_\ep(1)+  O(R^{-2s})+ \frac{1}{\ep^{2s}}\sum_{i\not= i_0} O\(\tilde{\phi}_i\)^2.
	\end{align*}
Grouping  and simplifying the terms $\frac{1}{\ep^{2s}}W'(\phi_{i_0})$, $ {W}''(\phi_{i_0}) \psi_{i_0}$, $ \frac{{W}''(\phi_{i_0}) }{\alpha} \bar{a}_{\ep}^{i_0}$, and
 $ \frac{{W}''(\phi_{i_0}) }{\alpha} \mu$, we get
\begin{equation}\label{Jvep_almostfinal}
\begin{aligned}
\mathcal{J}[v^\ep]=&
 \sum_{i\neq i_0}^N\dot{\phi}_i \partial_td_i(t,x) \\
& + \dot{\phi}_{i_0}  \left\{ \partial_td_{i_0}(t,x)-c_0\left(\bar{a}_{\ep}^{i_0} + \sum_{i\neq i_0}P^i_\ep-\mu\right)\right\}-\mu
\\&+ \sum_{i\neq i_0}\left\{  (W''(\phi_{i_0})- {W}''(\phi_i)) \psi_i +  \left(c_0 \dot{\phi}_i + \frac{W''\left( \phi_i \right)}{\alpha} -1\right)  \left(\mu-\bar{a}_{\ep}^i - \sum_{j\neq i}P^j_\ep\right) \right\} \\
	& + \frac{W''(\phi_{i_0})}{\alpha}\left(\sum_{i\neq i_0}^N \frac{\alpha\tilde\phi_i }{\ep^{2s}}+  \sum_{i\neq i_0 }^N \bar{a}_\ep^i -
\sum_{i\neq i_0}P^i_\ep\right)  + Ro_\ep(1)+  O(R^{-2s})+ \frac{1}{\ep^{2s}}\sum_{i\not= i_0} O\(\tilde{\phi}_i\)^2.
\end{aligned}
\end{equation}
Now, by Lemma \ref{lem:a_ep and frac laplacians},  \eqref{d_iconditions_lemmasub_bis}, and the fact that $\phi$ satisfies \eqref{eq:standing wave}, we obtain
\begin{align*}
\sum_{i\neq i_0 }^N \bar{a}_\ep^i &
= \sum_{i\neq i_0 }^N  \mathcal{I}_n^s \left[ \phi \left( \frac{d_i(t, \cdot)}{\ep} \right) \right](x) - \frac{C_{n,s}}{\ep^{2s}} \sum_{i\neq i_0 }^N\mathcal{I}_1^s \phi \left( \frac{d_i(t, x)}{\ep}\right) + O(R^{-2s}) + o_\ep(1)\\
&= \sum_{i\neq i_0}P^i_\ep- \frac{1}{\ep^{2s}} \sum_{i\neq i_0 }^N W'\(\phi_i \)+ O(R^{-2s}) + o_\ep(1).
\end{align*}
Expanding $W'$ around $0$ and using its periodicity, $W'(0)=0$, and $\alpha=W''(0)$, we further have
\begin{align*}
W'\(\phi_i \)=W'(\tilde \phi_i )=\alpha \tilde\phi_i +O\( \tilde\phi_i\)^2. 
\end{align*}
These two  estimates yields
\begin{align*}
\sum_{i\neq i_0}^N \frac{\alpha\tilde\phi_i }{\ep^{2s}}+  \sum_{i\neq i_0 }^N \bar{a}_\ep^i -
\sum_{i\neq i_0}P^i_\ep= \frac{1}{\ep^{2s}} \sum_{i\neq i_0}^NO\( \tilde\phi_i\)^2+ O(R^{-2s}) + o_\ep(1).
\end{align*}
Inserting this expression into \eqref{Jvep_almostfinal}, we finally obtain
\begin{equation}\label{Jvep_final}
\begin{aligned}
\mathcal{J}[v^\ep]=&
 \sum_{i\neq i_0}^N\dot{\phi}_i \partial_td_i(t,x) \\
& + \dot{\phi}_{i_0}  \left\{ \partial_td_{i_0}(t,x)-c_0\left(\bar{a}_{\ep}^{i_0} + \sum_{i\neq i_0}P^i_\ep-\mu\right)\right\}-\mu
\\&+ \sum_{i\neq i_0}\left\{  (W''(\phi_{i_0})- {W}''(\phi_i)) \psi_i +  \left(c_0 \dot{\phi}_i + \frac{W''\left( \phi_i \right)}{\alpha} -1\right)  \left(\mu-\bar{a}_{\ep}^i - \sum_{j\neq i}P^j_\ep\right) \right\} \\&
 + Ro_\ep(1)+  O(R^{-2s})+ \frac{1}{\ep^{2s}}\sum_{i\not= i_0} O\(\tilde{\phi}_i\)^2.
\end{aligned}
\end{equation}
We now  consider three cases: 1) there exists $i_0$ such that  $|d_{i_0}(t,x) - \tilde\sigma^N| < \delta$; 2) there exists $i_0$ such that   $\delta\leq |d_{i_0}(t,x) - \tilde\sigma^N| <\ell_0$; 
3) $|d_i(t,x) - \tilde\sigma^N| \geq \ell_0$
for all $i$.

\medskip
\noindent
\textbf{Case 1:} $|d_{i_0}(t,x) - \tilde\sigma^N| < \delta$.

Since $\tilde\sigma<\ell_0/2$,  $\tilde\sigma<1$, and $\delta=o_\ep(1)$,  from \eqref{Qisigmaemptyimters} it follows that, 
for $\ep$ sufficiently small, condition \eqref{d_iconditions_lemmasub} holds, and
 \begin{equation}\label{Pi0-Pi0deltaest}
|d_{i}(t,x)|\ge \ell_0,\qquad 
\operatorname{sgn}(d_i(t,x) - \tilde{\sigma}^N)=\operatorname{sgn}(d_i(t,x) )\quad\text{for }i \neq i_0.  
 \end{equation}
   Therefore, using  \eqref{eq:asymptotics for phi},  \eqref{eq:asymptotics for phi dot} and \eqref{psi_far},  we obtain
\begin{equation*}
\tilde\phi_i = O\(\frac{\ep^{2s}}{ \ell_0^{2s}}\), \quad\dot\phi_i = O\(\frac{\ep^{2s+1}}{ \ell_0^{2s+1}}\),\quad \psi_i = O\(\frac{\ep^{2s}}{ \ell_0^{2s}}\)  \quad \text{ for } i \neq i_0.
\end{equation*}
Consequently, by the uniform bounds on  $\bar a_\ep^i$ and $P^j_\ep$ provided by Lemmas~\ref{lem:a_estimates} and \ref{lem:Pep_estimates}, and using
the identity $\mu=\sigma$ from  \eqref{def:mu_definition},  we deduce 
$$\sum_{i\neq i_0}^N\dot{\phi}_i \partial_td_i(t,x),\,  \sum_{i\neq i_0}^N c_0 \dot{\phi}_i \left(\mu-\bar{a}_{\ep}^i - \sum_{j\neq i}P^j_\ep\right)=o_\ep(1),$$
$$\sum_{i\neq i_0} (W''(\phi_{i_0})- {W}''(\phi_i)) \psi_i =O\(\frac{\ep^{2s}}{ \ell_0^{2s}}\)=o_\ep(1),$$
\begin{align*}
\sum_{i\neq i_0} \left(\frac{W''\left( \phi_i \right)}{\alpha} -1\right)  \left(\mu-\bar{a}_{\ep}^i - \sum_{j\neq i}P^j_\ep\right)&=\sum_{i\neq i_0} \frac{W''\left( \phi_i \right)-W''(0)}{\alpha}   \left(\mu-\bar{a}_{\ep}^i - \sum_{j\neq i}P^j_\ep\right)\\
&=\sum_{i\neq i_0} O(\tilde\phi_i )= O\(\frac{\ep^{2s}}{ \ell_0^{2s}\delta^{2s}}\)
=o_\ep(1),
\end{align*}
and
$$ \frac{1}{\ep^{2s}}\sum_{i\not= i_0} O\(\tilde{\phi}_i\)^2=O\(\frac{\ep^{2s}}{ \ell_0^{4s}}\)=o_\ep(1). $$
Combining these estimates with \eqref{Jvep_final}, and using that $\mu=\sigma$, we conclude
\begin{equation}\label{Jvep_final_case1}
\begin{aligned}
\mathcal{J}[v^\ep]=&\dot{\phi}_{i_0}  \left\{ \partial_td_{i_0}(t,x)-c_0\left(\bar{a}_{\ep}^{i_0} + \sum_{i\neq i_0}P^i_\ep-\sigma\right)\right\}-\sigma +Ro_\ep(1)+  O(R^{-2s}).
\end{aligned}
\end{equation}
 Next, recall the definition of $P[d]$ in \eqref{PepsimP_heur}. By  \eqref{Pi0-Pi0deltaest},
\begin{align*}
\left|\sum_{i\neq i_0}P[d_i](t,x) -\sum_{i\neq i_0}P[d_i-\tilde\sigma^N](t,x) \right|\leq \sum_{i\neq i_0}\int_{\{|d_i(t,x+z)|\leq \tilde\sigma^N\}}\frac{dz}{|z|^{n+2s}}.
\end{align*}
Now, if $|d_i(t,x+z)| \le \tilde\sigma^N$ and $|d_i(t,x)| \ge \ell_0$, then necessarily $|z| \geq c$, where $c$ depends on $\ell_0$ and the Lipschitz norm of $d_i$. Moreover, since the zero level set of $d_i$ is smooth, we have
$|\{z:|d(x+z)|\leq\tilde\sigma^N\}|\le C\tilde\sigma^N$.
Hence,
\begin{align}\label{Plipschitz}
\sum_{i\neq i_0}P[d_i](t,x) -\sum_{i\neq i_0}P[d_i-\tilde\sigma^N](t,x)=O(\tilde\sigma^N).
\end{align}
By the above equality, \eqref{eq:interaction limitbis} applied with the argument $d_i - \tilde\sigma^N$, and  \eqref{d_iconditions_lemmasub},  we then have
$$\sum_{i\neq i_0}P^i_\ep =\sum_{i\neq i_0}P[d_i]+o_\ep(1)+O(\tilde\sigma^N).$$
By Lemma \ref{thm: b_ep fmc} and the fact that $\delta=o_\ep(1)$, we have 
$$\bar{a}_\ep^{i_0}=\kappa[x, d_{i_0}(t,\cdot) - \tilde\sigma^N] + o_\ep(1)+ O(R^{-2s})=\kappa[x, d_{i_0}(t,\cdot) ]+ o_\ep(1)+ O(R^{-2s}).$$
Combining these estimates with \eqref{eq:fmc for d}, and using that $\dot\phi_{i_0} \geq 0$, we obtain
\begin{align*}
\dot{\phi}_{i_0}&\left[\partial_t d_{i_0}(t,x) - c_0\left(\bar{a}_\ep^{i_0} + \sum_{i\neq i_0}P^i_\ep - \sigma\right)\right] \\&\leq \left[c_0\Big( \kappa[x, d_{i_0}(t,\cdot) ]+\sum_{i\neq i_0}P[d_i]-\sigma\Big)- c_0\left(\bar{a}_\ep^{i_0} + \sum_{i\neq i_0}P^i_\ep - \sigma\right)\right] \\&
=O(\sigma^N)+o_\ep(1)+ O(R^{-2s}).
\end{align*}
Substituting the above estimate into  \eqref{Jvep_final_case1}, we deduce that
\[
\mathcal{J}[v^\ep] \leq -\sigma + O(\sigma^N)  + Ro_\ep(1)+ O(R^{-2s}).
\]
 If $N\geq2$, we may choose $\sigma$ sufficiently small, so that $O(\sigma^N) \le \sigma/2$. Next, choosing $R_0 = R_0(\sigma)$ sufficiently large so that for all $R > R_0$ we have $|O(R^{-2s})| \leq \sigma/8$, and  then selecting $\ep_0 = \ep_0(R_0, \sigma) = \ep_0(\sigma)$ small enough so that for all $0 < \ep < \ep_0$, we have $|Ro_\ep(1)| \leq \sigma/8$, we obtain
\[
\mathcal{J}[v^\ep] \leq -\frac{\sigma}{4}.
\]

If $N=1$, then $\sum_{i\neq i_0}P^i_\ep=0$, and the term $O(\sigma^N)$ does not appear. Therefore, the same inequality follows by choosing $R$ sufficiently large and $\ep$ sufficiently small.

This proves \eqref{eq:pde sub} for Case 1.

\medskip
\noindent
\textbf{Case 2:} $\delta\leq |d_{i_0}(t,x) - \tilde\sigma^N| <\ell_0$.

Since $|d_{i_0}(t,x) - \tilde\sigma^N| <\ell_0$,  \eqref{Qisigmaemptyimters} implies that condition \eqref{d_iconditions_lemmasub} holds. 
As in Case 1, and using that $\ep/\delta^2 = o_\ep(1)$ together with the bound $0\leq \mu\le \sigma/\delta^{2s}$ from  \eqref{def:mu_definition}, we have
\begin{equation}\label{estimateCase2_mainlemma}
\begin{gathered}
\sum_{i\neq i_0}^N\dot{\phi}_i \partial_td_i(t,x),\,  \sum_{i\neq i_0}^N c_0 \dot{\phi}_i \left(\mu-\bar{a}_{\ep}^i - \sum_{j\neq i}P^j_\ep\right)=O\(\frac{\ep^{2s+1}}{\delta^{4s+1}}\)=o_\ep(1),\\
\sum_{i\neq i_0} (W''(\phi_{i_0})- {W}''(\phi_i)) \psi_i =O\(\frac{\ep^{2s}}{\delta^{2s}}\)=o_\ep(1),\\
\sum_{i\neq i_0} \left(\frac{W''\left( \phi_i \right)}{\alpha} -1\right)  \left(\mu-\bar{a}_{\ep}^i - \sum_{j\neq i}P^j_\ep\right)=\sum_{i\neq i_0} O(\tilde\phi_i )\(1+\frac{1}{\delta^{2s}}\)= O\(\frac{\ep^{2s}}{\delta^{4s}}\)
=o_\ep(1),\\
 \frac{1}{\ep^{2s}}\sum_{i\not= i_0} O\(\tilde{\phi}_i\)^2=O\(\frac{\ep^{2s}}{\delta^{4s}}\)=o_\ep(1). 
\end{gathered}
\end{equation}
Moreover, since $|d_{i_0}(t,x) - \tilde\sigma^N| \geq \delta$, by estimate \eqref{eq:asymptotics for phi dot} we have $ \dot{\phi}_{i_0}  =O\(\frac{\ep^{2s+1}}{\delta^{2s+1}}\)$. Using this  together with 
the uniform bounds on $\bar a_\ep^i$ and $P^j_\ep$ provided by Lemmas~\ref{lem:a_estimates} and \ref{lem:Pep_estimates}, and the estimate 
$0\leq \mu\le \sigma/\delta^{2s}$, we obtain
$$ \dot{\phi}_{i_0}  \left[ \partial_td_{i_0}(t,x)-c_0\left(\bar{a}_{\ep}^{i_0} + \sum_{i\neq i_0}P^i_\ep-\mu\right)\right]=O\(\frac{\ep^{2s+1}}{\delta^{4s+1}}\)=o_\ep(1).$$
Combining these estimates with  \eqref{Jvep_final} and using that  $\mu\ge \sigma$ by definition \eqref{def:mu_definition},  yields 
\[
\mathcal{J}[v^\ep] \leq -\mu + O(R^{-2s}) + Ro_\ep(1)\le -\sigma + O(R^{-2s}) + Ro_\ep(1).
\]
Arguing as in Case 1, \eqref{eq:pde sub} follows.

\medskip
\noindent
\textbf{Case 3:} $|d_i(t,x) - \tilde\sigma^N| \geq \ell_0$
for all $i$.

In this case, we fix an arbitrary index $i_0$. Since condition \eqref{d_iconditions_lemmasub} is clearly satisfied, the same argument as in Case~2 yields the estimates in \eqref{estimateCase2_mainlemma}. 
Moreover, since $|d_{i_0}(t,x) - \tilde\sigma^N| \geq \ell_0$, by estimate \eqref{eq:asymptotics for phi dot} we have $ \dot{\phi}_{i_0}  =O\(\frac{\ep^{2s+1}}{\ell_0^{2s+1}}\)$.

Arguing as in Case 2, \eqref{eq:pde sub} follows.
\end{proof}


\subsection{Subsolutions to \eqref{initial_data}}

\begin{lem}[Subsolutions to \eqref{initial_data}]\label{lem:initial_sub}
Assume \eqref{Omega_0^iassumptions}. Let $v^\ep$ be defined by \eqref{eq:barrier defn}, with $0<\tilde\sigma<1$, $R>1$,  $\delta=\delta(\ep)$ satisfying \eqref{delta:def}, and  
\begin{equation}\label{d_i(0,x)condition_lem}
 d_i(0,x)=d_i^0(x)+o_\ep(1),
\end{equation}
  where $d_i^0$ is given by \eqref{eq:initial d_i}. 
Then, there exists $\sigma_0>0$ such that, for all $0<\tilde\sigma<\sigma_0$, there is $\ep_0=\ep_0(\tilde\sigma)>0$ such that for all  $0<\ep<\ep_0$,
\begin{equation}\label{eq:initial_inequality}
v^\ep(0,x) \leq u^\ep_0(x) \qquad \text{for all } x \in \R^n,
\end{equation}
where $u_0^\ep$ is the initial datum defined in \eqref{initial_data}.
\end{lem}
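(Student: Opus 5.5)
The plan is to compare $v^\ep(0,x)$ with $u_0^\ep(x)$ term by term, using two sources of margin. The first is the shift by $\tilde\sigma^N$ inside each $\phi$. The second is the term $-\frac{\ep^{2s}}{\alpha}\mu$, which has a favorable sign. Both must absorb the correction terms $\ep^{2s}\psi_i$ and $\frac{\ep^{2s}}{\alpha}\bar a_\ep^i$.

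\textbf{Step 1: monotone reduction.} Fix $\tilde\sigma<\sigma_0$ and take $\ep_0(\tilde\sigma)$ so small that the $o_\ep(1)$ in \eqref{d_i(0,x)condition_lem} is at most $\tilde\sigma^N/2$. Then $d_i(0,x)-\tilde\sigma^N\le d_i^0(x)-\tilde\sigma^N/2$, and since $\dot\phi>0$,
\[
\phi\Big(\tfrac{d_i(0,x)-\tilde\sigma^N}{\ep}\Big)\le \phi\Big(\tfrac{d_i^0(x)}{\ep}\Big)\qquad\text{for every } i .
\]
Hence $v^\ep(0,x)\le u_0^\ep(x)-G(x)+E(x)$, where
\[
G(x):=\sum_i\Big[\phi\Big(\tfrac{d_i^0}{\ep}\Big)-\phi\Big(\tfrac{d_i(0,x)-\tilde\sigma^N}{\ep}\Big)\Big]\ge0
\]
and
\[
E(x):=\ep^{2s}\sum_i\psi_i+\frac{\ep^{2s}}{\alpha}\sum_i\bar a_\ep^i-\frac{\ep^{2s}}{\alpha}\mu .
\]
By Lemma~\ref{lem:a_estimates}, $\frac{\ep^{2s}}{\alpha}\sum_i|\bar a^i_\ep|\le C\ep^{2s}$ with $C$ independent of $\delta,\sigma$. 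It therefore suffices to show $E\le G$, and we split according to the definition of $\mu$ in \eqref{def:mu_definition}.

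\textbf{Step 2: far from all fronts.} Suppose $|d_i(0,x)-\tilde\sigma^N|\ge\delta$ for all $i$. Then every argument $\xi$ of $\psi_i$ has $|\xi|\ge\delta/\ep$, so \eqref{psi_far} gives $\ep^{2s}|\psi_i|\le C\ep^{4s}/\delta^{4s}=o(\ep^{2s})$, using $\ep/\delta^2=o_\ep(1)$. In the region where all $|d_i(0,x)-\tilde\sigma^N|\ge2\delta$ we have $\mu=\sigma/\delta^{2s}$, so $E\le C\ep^{2s}-\frac{\sigma\ep^{2s}}{\alpha\delta^{2s}}<0\le G$ for $\ep$ small, because $\delta=o_\ep(1)$ and $\sigma=\alpha\tilde\sigma$ is fixed. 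In the transition region $\delta\le|d_{i_0}(0,x)-\tilde\sigma^N|<2\delta$ the term $\mu$ only satisfies $\mu\ge\sigma$, so this case is handled by the gain $G$ exactly as in Step 3.

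\textbf{Step 3: near a front.} Suppose $|d_{i_0}(0,x)-\tilde\sigma^N|<2\delta$. Here $\mu\ge\sigma>0$ can be discarded, and \eqref{psi_near}, \eqref{psi_far} give $|E|\le C\ep^{2s}$; the fronts $i\ne i_0$ contribute even less, by \eqref{Qisigmaemptyimters}. The margin now comes from the $i_0$ term of $G$. Its argument in $v^\ep$ satisfies $(d_{i_0}(0,x)-\tilde\sigma^N)/\ep<2\delta/\ep$. On the other hand $d_{i_0}^0(x)\ge\tilde\sigma^N-2\delta-o_\ep(1)\ge\tilde\sigma^N/2$. Applying \eqref{eq:asymptotics for phi} at both points,
\[
G(x)\ge \phi\Big(\tfrac{\tilde\sigma^N}{2\ep}\Big)-\phi\Big(\tfrac{2\delta}{\ep}\Big)\ge c\,\frac{\ep^{2s}}{\delta^{2s}}-C\,\frac{\ep^{2s}}{\tilde\sigma^{2sN}} .
\]
Since $\delta\to0$ while $\tilde\sigma$ is fixed, this is at least $\frac c2\,\ep^{2s}\delta^{-2s}\gg C\ep^{2s}\ge E$. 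The constant $\sigma_0$ enters only to keep $2\tilde\sigma<\min\{\rho,\ell_0,1\}$, so that the shifted level sets stay in the region where the fronts are separated.

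\textbf{Main obstacle.} The delicate point is that the errors $\ep^{2s}\psi_i$ and $\ep^{2s}\bar a^i_\ep/\alpha$ are of the same order $\ep^{2s}$ as the tail of $\phi$, and they do not decay at spatial infinity. By contrast, the gain from the shift decays like $\ep^{2s}\tilde\sigma^N|d|^{-1-2s}$ as $|d|\to\infty$. So the shift alone cannot give the inequality globally. One has to use that $\mu$ equals the large value $\sigma/\delta^{2s}$ away from the fronts, and that $1-\phi(2\delta/\ep)\sim(\ep/\delta)^{2s}$ is large on the scale $\ep^{2s}$ near them. Checking that the transition band $\delta\le|d_{i_0}(0,x)-\tilde\sigma^N|\le2\delta$ is covered by one of these two mechanisms, namely Step 3, with constants uniform in $i$, is where I expect the care to be needed.
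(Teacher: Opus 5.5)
Your argument is correct and is essentially the paper's: you use the same dichotomy. Near a front, some $|d_{i_0}(0,x)-\tilde\sigma^N|<2\delta$, and the gain $1-\phi(2\delta/\ep)\ge c(\ep/\delta)^{2s}$ dominates both the $O(\ep^{2s})$ corrector terms and the $O(\ep^{2s}\tilde\sigma^{-2sN})$ tail of $\phi(d^0_{i_0}/\ep)$; far from all fronts, $|d_i(0,x)-\tilde\sigma^N|\ge 2\delta$ for every $i$, and $-\ep^{2s}\mu/\alpha=-\tilde\sigma\ep^{2s}\delta^{-2s}$ absorbs those corrections. The only difference is bookkeeping: you apply the monotonicity reduction everywhere rather than only in the far case, so near a front you compare just the $i_0$-th terms and never need the nested ordering that the paper uses to bound the other $\phi$-terms against the integer $i_0$.
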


\begin{proof}

 By \eqref{Omega_0^iassumptions}, there exists $\ell_0>0$ such that, for $i=1,\ldots,N-1$,  
 \begin{equation}\label{d_oell0cond} 
 d_{i}^0(x)\geq  d_{i+1}^0(x)+2\ell_0 \qquad \text{for all } x \in \R^n
\end{equation}
It follows from \eqref{d_i(0,x)condition_lem} that, for $\ep$ sufficiently small
and $i=1,\ldots,N-1$,
\begin{equation}\label{d_oell0condbis} 
 d_{i}(0,x)\geq  d_{i+1}(0,x)+\ell_0 \qquad \text{for all } x \in \R^n.
\end{equation}
Fix $x\in\R^n$. We consider two cases.
\medskip

\noindent
{\bf Case 1}:~\emph{There exists $i_0\in\{1,\ldots,N\}$ such that $|d_{i_0}^0(0,x)-\tilde\sigma^N| < 2\delta$.}

Since $\delta=o_\ep(1)$, we may choose $\tilde\sigma$ and $\ep$ so small that $\delta<\tilde\sigma<\ell_0/4$. Then,  by  \eqref{d_oell0condbis}, we have
$$d_i(0,x)-\tilde\sigma^N\ge \tilde\sigma\quad\text{for }i=1,\ldots,i_0-1,$$
$$d_i(0,x)-\tilde\sigma^N\le -\tilde\sigma\quad\text{for }i=i_0+1,\ldots,N.$$
Therefore, by \eqref{eq:asymptotics for phi}, 
$$\phi\!\left(\frac{d_i(0,x)-\tilde\sigma^N}{\ep}\right) \leq \begin{cases}
\displaystyle 1+C\frac{\ep^{2s}}{\tilde\sigma^{2s}} & \text{for }i=1,\ldots,i_0-1,\\[4pt]
\displaystyle C\frac{\ep^{2s}}{\tilde\sigma^{2s}} & \text{for }i=i_0+1,\ldots,N.
\end{cases}.$$
By \eqref{psi_far}, and using that $\delta<\tilde\sigma$ and $\ep/\delta^2=o_\ep(1)$,  we obtain, for $i\neq i_0$, 
$$\ep^{2s}\psi_i\!\left(\frac{d_i(0,x)-\tilde\sigma^N}{\ep};0,x\right)\leq C\frac{\ep^{4s}}{\delta^{2s}\tilde\sigma^{2s}}\leq C\ep^{2s}.$$
For the index $i_0$, it follows from the monotonicity of $\phi$  and \eqref{eq:asymptotics for phi} that
$$\phi\!\left(\frac{d_{i_0}(0,x)-\tilde\sigma^N}{\ep}\right)\le \phi\!\left(\frac{2\delta}{\ep}\right)\le 1-C\,\frac{\ep^{2s}}{\delta^{2s}}.$$
If $|d_{i_0}(0,x)-\tilde\sigma^N|<\delta$, then, by \eqref{psi_near},
$$\ep^{2s}\!\left|\psi_{i_0}\!\left(\frac{d_{i_0}(0,x)-\tilde\sigma^N}{\ep};0,x\right)\right|\le C\ep^{2s}. $$
On the other hand,  if $\delta\le|d_{i_0}(0,x)-\tilde\sigma^N|<2\delta$, then by \eqref{psi_far} and the fact that $\ep/\delta^2=o_\ep(1)$, 
$$\ep^{2s}\!\left|\psi_{i_0}\!\left(\frac{d_{i_0}(0,x)-\tilde\sigma^N}{\ep};0,x\right)\right|\le C\,\frac{\ep^{4s}}{\delta^{4s}}\leq C\ep^{2s}.$$
Combining the above estimates, the bound on $\bar a_\ep^i$  provided by Lemmas~\ref{lem:a_estimates}, and recalling that
$\mu\ge 0$, we obtain
\begin{equation}\label{vep(0,x)esticase1}v^\ep(0,x) \le i_0-C\frac{\ep^{2s}}{\delta^{2s}}+C\ep^{2s} \le i_0-C\frac{\ep^{2s}}{2\delta^{2s}}.
\end{equation}
We now estimate $u_0^\ep$. As above, using
\eqref{d_oell0cond}, \eqref{eq:asymptotics for phi}, and the fact that $\phi\ge0$, we obtain
$$\phi\!\left(\frac{d_i^0(x)}{\ep}\right) \;\ge\;
\begin{cases}
\displaystyle 1 - C\frac{\ep^{2s}}{\tilde\sigma^{2s}} & \text{for }i=1,\ldots,i_0-1,\\[4pt]
0 & \text{for }i=i_0+1,\ldots,N.
\end{cases}$$
 Next, by \eqref{d_i(0,x)condition_lem}
  and  $\delta=o_\ep(1)$, for $\ep$ sufficiently small, we have 
  $$d_{i_0}^0(x)\ge\tilde\sigma^N-2\delta+o_\ep(1)\ge\frac{{\tilde\sigma}^N}{2}.$$
  Therefore, by \eqref{eq:asymptotics for phi}, 
$$\phi\!\left(\frac{d_{i_0}^0(x)}{\ep}\right) \ge 1-C\frac{\ep^{2s}}{\tilde\sigma^{2sN}}.$$
We conclude that
$$u_0^\ep(x)\geq i_0-C\frac{\ep^{2s}}{\tilde\sigma^{2sN}}.$$
Combining this with \eqref{vep(0,x)esticase1} and using that $\delta=o_\ep(1)$, we obtain
$$v^\ep(0,x)  \le i_0-C\frac{\ep^{2s}}{2\delta^{2s}}\le i_0-C\frac{\ep^{2s}}{\tilde\sigma^{2sN}}\leq u_0^\ep(x),$$
provided $\ep$ is sufficiently small.

This proves \eqref{eq:initial_inequality} in Case 1. 

\medskip

\noindent{\bf Case 2}:~\emph{$|d_i(0,x)-\tilde\sigma^N| \geq 2\delta$ for all $i=1,\ldots,N$.}

By \eqref{d_i(0,x)condition_lem}, for $\ep$ sufficiently small, 
$$d_i(0,x)-\tilde\sigma^N\leq d_i^0(x).$$
Hence, by the monotonicity of $\phi$, 
$$\phi\!\left(\frac{d_i(0,x)-\tilde\sigma^N}{\ep}\right)\le \phi\!\left(\frac{d_i^0(x)}{\ep}\right).$$
Arguing as in Case~1, it follows that
$$\ep^{2s}\!\left|\psi_i\!\left(\frac{d_i(0,x)-\tilde\sigma^N}{\ep};0,x\right)\right|\le C\ep^{4s}.$$ 
Recalling the definition of $\mu$  in  \eqref{def:mu_definition} and \eqref{alphasigma}, we have 
 $$\frac{\ep^{2s}}{\alpha}\mu[d_1-\tilde\sigma^N,\ldots,d_N-\tilde\sigma^N](0,x)=\frac{\ep^{2s}\tilde\sigma}{\delta^{2s}}.$$
 Combining the above estimates with the bound on $\bar a_\ep^i$  provided by Lemmas~\ref{lem:a_estimates}, and using that $\delta=o_\ep(1)$, we obtain, for $\ep$ sufficiently  small,
\begin{align*}
v^\ep(0,x) \le u^\ep_0(x)+C\ep^{2s}-\frac{\ep^{2s}\tilde\sigma}{\delta^{2s}} \le u^\ep_0(x).
\end{align*}
This proves \eqref{eq:initial_inequality} in Case 2.
\end{proof}


\section{Proof of Theorem \ref{thm:main_result}}\label{sec:proof main_result}

In this section, we complete the proof of Theorem  \ref{thm:main_result} using the barriers constructed in Section \ref{barriersection},  the comparison principle and the decay estimates and bounds established in Section \ref{sec:corrector-section}.
\begin{proof}[Proof of Theorem 1.1]
Set
\[
 U(t,x):=\sum_{i=1}^N\mathbf 1_{\Omega_t^i}(x),
 \qquad
 \mathcal G:=\bigcup_{i=1}^N
 \{(t,x)\in[0,T]\times\mathbb R^n:x\in\Gamma_t^i\}.
\]
We shall prove that $u^\varepsilon\to U$ locally uniformly on
$([0,T]\times\mathbb R^n)\setminus\mathcal G$.

First we construct strict geometric barriers to which Section \ref{barriersection} applies.
Let $d_i(t,x)$ be the signed distance function to $\Omega_t^i$. Then, 
by \eqref{eq:velocity-intro}  and \eqref{eq:fmc_interaction},
\begin{equation}\label{mainthm:eqfronts}
\partial_td_i(t,x)=c_0\left(\kappa[x,d_i(t,\cdot)]
              +\sum_{j\ne i}P[d_j](t,x)\right),\qquad x\in \Gamma_t^i,
\end{equation}
where $P$ is defined  in \eqref{def:P}. 
Smoothness and strict separation on the compact time interval give a
uniform tubular radius and a positive lower bound for the distances
between distinct fronts. 

For a small $r\ge0$, consider the parallel sets
\[
 \Omega_t^{i,-}(r):=\{d_i(t,\cdot)>r\},
 \qquad
 \Omega_t^{i,+}(r):=\{d_i(t,\cdot)>-r\},
\]
and let $\tilde d_i^{\pm,r}$ be their signed distances. 
Then 
 \[\tilde{d}_i^{-,r}=d_i-r,
 \qquad \tilde{d}_i^{+,r}=d_i+r.
\]
By the smoothness of the flow $(\Omega_t^i)_{i=1}^N$, Proposition \ref{lem:kappa-neighborhood} and \eqref{mainthm:eqfronts}, for $\sigma>0$ and $r$ sufficiently small, and $\tilde\sigma$ defined as in \eqref{alphasigma}, there exists $C>0$ such that 
\begin{equation}\label{proof:geometric-error}
\left|\partial_td_i(t,x)-c_0\left(\kappa[x,d_i(t,\cdot)]
              +\sum_{j\ne i}P[d_j](t,x)\right)\right|\leq C(r+\sigma) ,\qquad |\tilde d_i^{\pm,r}(t,x)|< 2\tilde\sigma.
\end{equation}
Given $\sigma,\,A>0$, we choose 
\[
 r=r(t):=\sigma(e^{At}-1).
\]
For $\sigma$ sufficiently small, all the parallel families with
$r=r(t)$ remain smooth and strictly nested on $[0,T]$.
Let $d_i^{\pm,r}$ be the smooth extensions of $\tilde{d}_i^{\pm,r}$ supplied
by Definition \ref{defn:extension} with $\tau$ chosen as in  Lemmas~\ref{lem:a_estimates} and \ref{lem:Pep_estimates}. By \eqref{approxdtau},
\begin{equation}\label{proof:distance-approximation}
d_i^{\pm,r}=\tilde{d}_i^{\pm,r}+o_\ep(1).
\end{equation}
Therefore, for  $| d_i^{\pm,r}(t,x)|< 2\tilde\sigma$, recalling Remark \ref{Kdextensionrem}, we have 
$$\partial_t d_i^{\pm,r}(t,x)=\partial_t \tilde d_i^{\pm,r}(t,x)
=\partial_t d_i(t,x)\pm r'(t)=\partial_t d_i(t,x)\pm A(r(t)+\sigma),$$
$$\kappa[x,d_i^{\pm,r}(t,\cdot)]=\kappa[x,\tilde d_i^{\pm,r}(t,\cdot)]=\kappa[x,d_i(t,\cdot)].$$
Arguing as in the proof of \eqref{Plipschitz}, we also have
$$\sum_{j\ne i}P[d_j](t,x)=\sum_{j\ne i}P[d^{\pm,r}_j](t,x)+O(r(t))+o_\ep(1).$$
Plugging into \eqref{proof:geometric-error} and choosing $A$ sufficiently large (but independent of all other parameters), we obtain  
\begin{equation*}
\partial_td_i^{-,r}(t,x)\leq c_0\left(\kappa[x,d_i^{-,r}(t,\cdot)]
              +\sum_{j\ne i}P[d^{-,r}_j](t,x)-\sigma\right),\qquad | d_i^{-,r}(t,x)|< 2\tilde\sigma,
\end{equation*}
\begin{equation*}
\partial_td_i^{+,r}(t,x)\geq c_0\left(\kappa[x,d_i^{+,r}(t,\cdot)]
              +\sum_{j\ne i}P[d^{+,r}_j](t,x)+\sigma\right),\qquad | d_i^{+,r}(t,x)|< 2\tilde\sigma.
\end{equation*}
Moreover,
since $r(0)=0$, 
\[
 d_i^{\pm,r}(0,x)
 =d_i^0(x)+o_\varepsilon(1).
\]
Define
\begin{equation*}
\begin{split}
v_-^{\ep,r}(t,x) =&\sum_{i=1}^N \phi\left( \frac{d_i^{-,r}(t,x)- \tilde{\sigma}^N }{\ep}\right)+E_-^{\ep,r}(t,x)
\end{split}
\end{equation*}
where $E_-^{\ep,r}$ denote all the corrector terms in \eqref{eq:barrier defn} associated to the distance functions $d_1^{-,r},\ldots, d_N^{-,r}$. 
For a suitable choice of the parameters, by Lemmas \ref{lem:barrier} and \ref{lem:initial_sub}, 
\begin{equation*}
\ep \partial_t v_-^{\ep,r} - \mathcal{I}_n^s[v_-^{\ep,r} ] +\frac{1}{\ep^{2s}} W'(v_-^{\ep,r} ) \leq 0 \quad \hbox{in}~(0,T) \times \R^n,
\end{equation*}
and 
$$v_-^{\ep,r}(0,x)\leq u_0(x). $$ 
The analogous upper barrier, using $\mu$ instead of $-\mu$ in the
corrector construction and the positive spatial shift, has the form
\[
 v_+^{\varepsilon,r}(t,x)
 =\sum_{i=1}^N\phi\left(
   \frac{d_i^{+,r}(t,x)
                +\widetilde\sigma^N}{\varepsilon}\right)
   +E_+^{\varepsilon,r}(t,x).
\]
Consequently, the comparison principle implies
\begin{equation}\label{proof:comparison}
 v_-^{\varepsilon,r}\le u^\varepsilon
 \le v_+^{\varepsilon,r}
 \qquad\text{on }[0,T]\times\mathbb R^n.
\end{equation}
Now, let $ K$ be any compact subset of
$([0,T]\times\mathbb R^n)\setminus\mathcal G$. Then
\[
 m:=\min_{1\le i\le N}\inf_{(t,x)\in K}|d_i(t,x)|>0.
\]
The  signed distances to the parallel sets satisfy
\[
 \|\tilde d_i^{\pm,r}-d_i\|_\infty
 \le r(T).
\]
Choose  $\sigma$ small enough that
$r(T)+\widetilde\sigma^N<m/4$.
By \eqref{proof:distance-approximation}, for sufficiently small
$\varepsilon$, the quantities $
 d_i^{-,r}-\widetilde\sigma^N$, 
  $d_i^{+,r}+\widetilde\sigma^N$ 
have the same sign as $d_i$ on $ K$ and absolute value at
least $m/2$. The layer estimate in Lemma 5.1 therefore implies
\[
 \sup_{(t,x)\in K}
 \left|\phi\left(
 \frac{d_i^{\pm,r}(t,x)
                  \pm\widetilde\sigma^N}{\varepsilon}\right)
       -\mathbf 1_{\Omega_t^i}(x)\right|
 \le C\frac{\varepsilon^{2s}}{m^{2s}}\longrightarrow0\qquad \text{as }\ep\to0.
\]
Moreover, by  Lemmas~\ref{lem:a_estimates}, \ref{lem:psi-reg} and \eqref{def:mu_definition}
\[
 \|E_-^{\varepsilon,r}\|_\infty
 +\|E_+^{\varepsilon,r}\|_\infty
 \le C\varepsilon^{2s}
                \(1+\frac{\ep^{2s}}{\delta^{2s}m^{2s}}+\frac{1}{\delta^{2s}}\)\longrightarrow0\qquad \text{as }\ep\to0.
\]
We used $\varepsilon/\delta\to0$, which follows from \eqref{delta:def}.
 This proves
$
 \|v_\pm^{\varepsilon,r}-U\|_{L^\infty( K)}
 \longrightarrow0,
$
which combined with \eqref{proof:comparison} yields
$\|u^\varepsilon-U\|_{L^\infty( K)}\to0$.

Finally, strict nesting gives $U=N$ on $\Omega_t^N$,
$U=i$ on $\Omega_t^i\setminus\overline{\Omega_t^{i+1}}$
for $1\le i<N$, and $U=0$ on
$\mathbb R^n\setminus\overline{\Omega_t^1}$.
This establishes the convergence, with the interfaces excluded,
and completes the proof.

\end{proof}

\section{Proof of Lemma \ref{lem:interaction asymptotics}}\label{sec:proof of interaction asymptotics}
For ease of notation, throughout this section we omit the dependence on $t$.

Assume that $d(x) > \ell$. By definition,
\[
\mathcal{I}_n^s\left[\phi\left(\frac{d}{\ep}\right)\right](x) = \int_{\R^n} \left(\phi\left(\frac{d(x+z)}{\ep}\right) - \phi\left(\frac{d(x)}{\ep}\right)\right) \frac{dz}{|z|^{n+2s}}.
\]
Let $c_1:=\|\nabla d\|_\infty$. We split the integral as
\[
\int_{\R^n} \left(\phi\left(\frac{d(x+z)}{\ep}\right) - \phi\left(\frac{d(x)}{\ep}\right)\right) \frac{dz}{|z|^{n+2s}} = \int_{\{|z| < \frac{\ell}{2c_1}\}} (\cdots) + \int_{\{|z| \geq \frac{\ell}{2c_1}\}} (\cdots) =: I + II.
\]
We first estimate $I$. By the mean value theorem, for some $\tau \in (0,1)$,
\[
\phi\left(\frac{d(x+z)}{\ep}\right) - \phi\left(\frac{d(x)}{\ep}\right) = \dot{\phi}\left(\frac{d(x)}{\ep} + \tau \frac{d(x+z) - d(x)}{\ep}\right) \cdot \frac{d(x+z) - d(x)}{\ep}.
\]
If $d(x) > \ell$ and $|z| < \frac{\ell}{2c_1}$, then 
\[
\frac{d(x)}{\ep} + \tau \frac{d(x+z) - d(x)}{\ep}\geq\frac{\ell}{2\ep},
\]
and by \eqref{eq:asymptotics for phi dot}, 
\[
\left|\phi\left(\frac{d(x+z)}{\ep}\right) - \phi\left(\frac{d(x)}{\ep}\right)\right| \leq C \frac{\ep^{2s}}{\ell^{2s+1}} |z|.
\]
Hence, if $\ep / \ell^2 = o_\ep(1)$, 
\begin{equation}\label{eq:I_est_interaction}
|I| \leq C \frac{\ep^{2s}}{\ell^{2s+1}} \int_{\{|z| < \frac{\ell}{2c_1}\}} |z|^{1-n-2s} \, dz = C\frac{\ep^{2s}}{\ell^{4s}} = o_\ep(1). 
\end{equation}
Next, we estimate $II$. We  decompose the integral according to the sign of $d(x+z)$:
\begin{align*}
II &= \int_{\{|z| \geq \frac{\ell}{2c_1},\, d(x+z) > \ep^{\frac12}\}} (\cdots) + \int_{\{|z| \geq \frac{\ell}{2c_1},\, |d(x+z)| \leq \ep^{\frac12}\}} (\cdots) + \int_{\{|z| \geq \frac{\ell}{2c_1},\, d(x+z) < -\ep^{\frac12}\}} (\cdots) \\
&=: II_1 + II_2 + II_3.
\end{align*}
If $d(x+z) > \ep^{\frac12}$ and   $d(x) > \ell$, then by \eqref{eq:asymptotics for phi},
$$\phi\left(\frac{d(x+z)}{\ep}\right) - \phi\left(\frac{d(x)}{\ep}\right)=(1+O(\ep^s))-\(1+O\(\frac{\ep^{2s}}{\ell^{2s}}\)\)=O(\ep^s)+O\(\frac{\ep^{2s}}{\ell^{2s}}\).$$
Consequently,  
\[
|II_1|\leq \(O(\ep^s)+O\(\frac{\ep^{2s}}{\ell^{2s}}\)\)\int_{\{|z| \geq \frac{\ell}{2c_1}\}}\frac{dz}{|z|^{n+2s}}=O\(\frac{\ep^s}{\ell^{2s}}\)+O\(\frac{\ep^{2s}}{\ell^{4s}}\)=o_\ep(1),
\]
again if $\ep/\ell^2=o_\ep(1)$.

We now estimate $II_2$. Since the zero level set of $d$ is a smooth surface, we have $|\{z\,:\, |d(z+x)|<\ep^\frac12\}|\leq C\ep^\frac{1}{2}$. Thus,
$$|II_2|\leq 2\int_{\{|z| \geq \frac{\ell}{2c_1},\, |d(x+z)| \leq \ep^{\frac12}\}}\frac{dz}{|z|^{n+2s}}\leq \frac{C}{\ell^{n+2s}}\int_{\{|d(x+z)| \leq \ep^{\frac12}\}}dz\leq \frac{C\ep^\frac{1}{2}}{\ell^{n+2s}}=o_\ep(1), $$
provided  $\ell$ satisfies $\ep^{\frac12}/\ell^{n+2s}=o_\ep(1)$.

We finally estimate $II_3$. If $d(x+z)<-\ep^\frac{1}{2}$ and $d(x)>\ell$, then $|z|\ge\frac{\ell}{2c_1}$. Moreover, by \eqref{eq:asymptotics for phi},
$$\phi\left(\frac{d(x+z)}{\ep}\right) - \phi\left(\frac{d(x)}{\ep}\right)=-1+O(\ep^s)+O\(\frac{\ep^{2s}}{\ell^{2s}}\).$$
Therefore, arguing as above,
\[
II_3= \int_{\{d(x+z)<-\ep^{\frac12}\}}(\cdots)=-\int_{\{ d(x+z)<-\ep^{\frac12}\}}\frac{dz}{|z|^{n+2s}}+o_\ep(1)=-\int_{\{ d(x+z)<0\}}\frac{dz}{|z|^{n+2s}}+o_\ep(1).
\]

Combining the above estimates, if $d(x) > \ell$, the only non-trivial contribution comes from $II_3$, and therefore
\[
\mathcal{I}_n^s\left[\phi\left(\frac{d}{\ep}\right)\right](x) = -\int_{\{d(x+z) < 0\}} \frac{dz}{|z|^{n+2s}} + o_\ep(1).
\]
Similarly, if $d(x) < -\ell$, the only non-trivial contribution comes from $II_1$, yielding
\[
\mathcal{I}_n^s\left[\phi\left(\frac{d}{\ep}\right)\right](x) = +\int_{\{d(x+z) > 0\}} \frac{dz}{|z|^{n+2s}} + o_\ep(1).
\]
 This completes the  proof of  \eqref{eq:interaction limit}.

\section{Proof of Lemma \ref{lem:Pep_estimates}}\label{sec:proof of Pep estimates}
 Estimates \eqref{Pep_bound}  and \eqref{eq:interaction limitbis} follow immediately from \eqref{eq:interaction limit} and the definition of $g$ in \eqref{gdefforPep}. 

We next estimate the derivatives of $P_\ep[d]$. We will establish the estimate for $\nabla_x P_\ep$; the estimate for $\partial_t P_\ep$ follows by a similar argument. Differentiating and using \eqref{Pep_bound}, we obtain
\begin{align}\label{partialPcomput1}
    \nabla_x P_\ep[d] = \nabla_x \mathcal{I}_n^s\left[\phi\(\frac{d}{\ep}\)\right] \cdot g + O(1).
\end{align}
We then compute 
\begin{equation}\label{partialxba-close}\begin{split}
   \partial_{x_i}\mathcal{I}_n^s\left[\phi\(\frac{d}{\ep}\)\right] = & \int_{\R^n} \bigg[ \dot{\phi} \left( \frac{d(x+ z)}{\ep}\right) \frac{\partial_{x_i}d(x+z)}{\ep} - \dot{\phi}\left( \frac{d(x)}{\ep}  \right) \frac{\partial_{x_i}d(x)} {\ep} \bigg] \frac{dz}{|z|^{n+2s}}  \\
   =  &\ep^{-1}\left\{ \int_{\R^n} \bigg [ \dot{\phi} \left( \frac{d(x+z)}{\ep}\right) (\partial_{x_i}d(x+ z) - \partial_{x_i}d(x))\right.  \\
    & + \left(  \dot{\phi} \left( \frac{d(x+ z)}{\ep}\right) -  \dot{\phi}\left( \frac{d(x) }{\ep}\right) \right) \partial_{x_i}d(x).
\end{split}
\end{equation}
Since $g=0$ whenever $|d(x)|\leq \ell_0/2,$ it suffices to estimate \eqref{partialxba-close} when $|d(x)|\geq \ell_0/2$.

Recall the uniform Lipschitz estimates and the estimates on $D^2d$, depending on the parameter $\tau$, given in \eqref{approxdtau}. 
Since $d$ is Lipschitz continuous, there exists $c>0$ such that if $|z|\leq c\ell_0$ then  $|d(x+z)-d(x)|\leq  \ell_0/4$, and consequently  $|d(x+z)|\geq \ell_0/4$.
We therefore split 
\begin{align}\label{partialxbnablaafar}
  \partial_{x_i}\mathcal{I}_n^s\left[\phi\(\frac{d}{\ep}\)\right]  &=\ep^{-1}\left(\int_{\{|z|\leq c\ell_0\}}(\ldots)+\int_{\{|z|>c\ell_0\}}(\ldots)\right)=:\ep^{-1}(I+II).
   \end{align}
  We first estimate $I$. Using the asymptotic estimates  for $\dot{\phi}$ and $\ddot{\phi}$ given in \eqref{eq:asymptotics for phi dot},  for $|z|\leq c\ell_0$ we have 
$$0\leq  \dot{\phi} \left(\frac{d(x+z)}{\ep}\right)\leq C\frac{\ep^{2s+1}}{\ell_0^{2s+1}},$$
and, for some $\theta\in (0,1)$,  
 $$\left|  \dot{\phi} \left( \frac{d(x+ z)}{\ep}\right) -  \dot{\phi}\left( \frac{d(x) }{\ep}\right) \right| \leq 
  \left|\ddot{\phi}\left(  \frac{d(x)}{\ep}  +\theta \frac{d(x+z)-d(x)}{\ep} \right)\right|\frac{|z|}{\ep}\leq C\frac{\ep^{2s}}{\ell_0^{2s+1}}.$$  
  Therefore, using \eqref{approxdtau}, 
\begin{equation}\label{Inabla_xafar}\begin{split}
    |I| \le 
    & C \frac{\ep^{2s+1}}{\ell_0^{2s+1}\tau} \int_{ \{ |z| \le c \ell_0 \} } \frac{dz}{|z|^{n+2s-1}} + C \frac{\ep^{2s}}{\ell_0^{2s+1}} \int_{ \{|z| \le  c \ell_0\}} \frac{dz}{|z|^{n+2s-2}} \\
    \leq& C\left( \frac{\ep^{2s+1}}{\ell_0^{4s}\tau}+\frac{\ep^{2s}}{\ell_0^{4s-1}}\right). 
\end{split}
\end{equation}  
We next estimate $II$. We have
\begin{equation}\label{IIsplitnablaxfar}\begin{split}
    |II|& \le  C \int_{\{ |z|>c\ell_0  \}} \dot{\phi} \left( \frac{d(x+ z)}{\ep}\right) \frac{dz}{|z|^{n+2s}} + C\int_{\{|z|>c\ell_0 \}} \dot{\phi} \left( \frac{d(x)}{\ep} \right)  \frac{dz}{|z|^{n+2s}} \\&=:II_1 + II_2. 
\end{split}
\end{equation}
We further split
\begin{align*}
    II_1 & =\int_{ \{  |z|>c\ell_0,\, |d(x+z)| \le\ep^{1\!/2}\} }(\dots) + \int_{ \{  |z|>c\ell_0,\, |d(x+z)| > \ep^{1\!/2}\} }(\dots) \\
    & =: J_1+J_2.
\end{align*} 
Using that $\{z\,:\,d(x+z)=0\}$ is a smooth surface, we get
\begin{align*}
     J_1 \le   C \int_{ \{  |z|>c\ell_0,\, |d(x+z)| \le \ep^{1\!/2}\} } \frac{dz}{|z|^{n+2s}} \le \frac{C}{\ell_0^{n+2s}} \int_{ \{|d(x+z)| \le \ep^{1\!/2}\} } dz \le 
    \frac{C \ep^{\frac12}}{\ell_0^{n+2s}}.
\end{align*}
By estimate \eqref{eq:asymptotics for phi dot} for $\dot{\phi}$, we also have
\begin{align*}
     J_2 \le C\ep^{s+\frac12} \int_{ \{  |z|>c\ell_0\} } \frac{dz}{|z|^{n+2s}} = C \frac{\ep^{s+\frac12}}{\ell_0^{2s}}. 
\end{align*}
 From the above estimates on $J_1$ and $J_2$ we infer that 
   \begin{equation*}\label{II_1estimnablaax_far}II_1\leq \frac{C \ep^{\frac12}}{\ell_0^{n+2s}}.
   \end{equation*}
We finally estimate $II_2$. By  \eqref{eq:asymptotics for phi dot}  for $\dot{\phi}$ we have 
\begin{equation*}\label{II_2est_nablax_far_zerpgra}II_2= C\dot{\phi} \left( \frac{d(x)}{\ep} \right)  \int_{\{  |z|>c\ell_0\}}\frac{dz}{|z|^{n+2s}}\leq \frac{C\ep^{2s+1}}{\ell^{2s+1} }\int_{\{  |z|>c\ell_0\}}\frac{dz}{|z|^{n+2s}}\leq\frac{C\ep^{2s+1}}{\ell_0^{4s+1} }.
\end{equation*}
Combining the estimates for  $II_1$ and $II_2$, we obtain 
$$|II|\leq C \frac{ \ep^{\frac12}}{\ell_0^{n+2s}}.$$
Together with \eqref{partialPcomput1}, \eqref{partialxbnablaafar}, and \eqref{Inabla_xafar},  this estimate implies that there exist $\tau_\ep=o_\ep(1)$ such that if $\tau\geq \tau_\ep$ in  Definition~\ref{defn:extension}, then \eqref{Pep_deriv_bound} holds. The proof of the lemma is complete.

\appendix

\section{Examples of the interacting flow}\label{sec:examples}

In this appendix, we examine two simple configurations that illustrate the structure and dynamics of the coupled system \eqref{eq:fmc_interaction}.

\subsection*{Example 1: Half-spaces}

Let $\Omega_t^i$, $i=1,\ldots,N$, be the half-spaces orthogonal to the $e_1$-axis defined by
$$
\Omega_t^i=\{x\cdot e_1>x^i(t)\},\qquad t \geq 0,
$$
where
\begin{equation}\label{eq1-halfspaces}
x^1(t)<x^2(t)<\cdots<x^N(t),\qquad t\geq 0.
\end{equation}
Here, $x^i(t)$ denotes the position along the $e_1$-axis of the interface
$\Gamma_t^i=\{x\cdot e_1=x^i(t)\}.$

By \eqref{eq1-halfspaces}, the family $(\Omega_t^i)_{i=1}^N$ satisfies the nesting condition $\overline{\Omega^{i+1}_t}\subset \Omega^i_t$.

The signed distance function to $\Omega_t^i$ is $d_i(t,x)=x\cdot e_1-x^i(t).$
Notice that
$\kappa[x,d_i]=0$ and $\partial_t d_i=-\dot{x}^i$,  $i=1,\ldots,N.$ 

Fix $x\in\Gamma_t^i$, so that $x\cdot e_1=x^i(t)$. If $j>i$, then
$  d_j(t,x)=x^i(t)-x^j(t)<0,$
and the contribution of $\Omega_t^j$ to the interaction term in \eqref{eq:fmc_interaction} is
\begin{align*}
\int_{\{d_j(t,x+z)>0\}}\frac{dz}{|z|^{n+2s}}
&=\int_{\{z_1>x^j(t)-x^i(t)\}}\frac{dz}{|z|^{n+2s}}=C_{n,s}\int_{x^j(t)-x^i(t)}^\infty
\frac{dz_1}{z_1^{1+2s}}\\
&=\frac{C_{n,s}}{2s}
\frac{1}{\bigl(x^j(t)-x^i(t)\bigr)^{2s}},
\end{align*}
where $C_{n,s}$ is defined in \eqref{eq:Cns}. The case $j<i$ is analogous.

Consequently, system \eqref{eq:fmc_interaction} reduces to
\begin{equation}\label{eq3-halfspaces}
\dot{x}^i(t)
=\frac{c_0C_{n,s}}{2s}
\sum_{j\neq i}
\frac{x^i(t)-x^j(t)}{|x^i(t)-x^j(t)|^{2s+1}},
\qquad i=1,\ldots,N.
\end{equation}
This is precisely the system of ODEs governing one-dimensional dislocation dynamics. 
Given initial condition $x^i(0)=x_0^i$ with $x^1_0<\cdots <x_0^N$, 
 the existence and uniqueness of a global in time solution $(x^1(t),\ldots,x^N(t))$ to \eqref{eq3-halfspaces} satisfying the ordering condition \eqref{eq1-halfspaces} were established in \cite{GonzalezMonneau, VanMeursPeletierPozar}.

\subsection*{Example 2: Concentric spheres} 
Let $\Omega_t^i$, $i=1,\ldots,N$, be the concentric balls  defined by
$$
\Omega_t^i=B_{R_i(t)}(0),\qquad t> 0,
$$
where
\begin{equation}\label{eq1-balls}
R_N(t)<R_{N-1}(t)<\cdots<R_1(t),\qquad t\geq 0.
\end{equation}
Here the interfaces are the spheres $\Gamma_t^i = \partial B(0, R_i(t))$. 
Condition \eqref{eq1-balls} guarantees that \eqref{nestingassumption} is satisfied. 
The signed distance functions are $d_i(t,x) = R_i(t) - |x|$. 

Fix $x \in \Gamma_t^i$, so that $|x| = R_i$. By Proposition~\ref{ballFMC}, $\kappa[x,d_i] =  -\omega/R_i^{2s}$ for a constant $\omega> 0$. Moreover, $\partial_t d=\dot R_i$. 
 By rotational invariance, the contribution of $\Omega_t^j$, $j\neq i$,  to the interaction term in \eqref{eq:fmc_interaction} depends only on the radii $R_i(t)$ and $R_j(t)$, and  splitting according to the sign of $d_j(t,x) = R_j(t) - R_i(t)$,  it can be written 
\begin{equation}\label{eq:sphere_interactions}
P(R_i(t),R_j(t)):= \int_{\{d_j(t,z)\operatorname{sgn}(d_j(t,x))<0\}}\frac{dz}{|z-x|^{n+2s}}
=\begin{cases}
\displaystyle\int_{\{|z|>R_j(t)\}}\frac{dz}{|z-x|^{n+2s}} & \text{if } j<i,\\[15pt]
\displaystyle\int_{\{|z|<R_j(t)\}}\frac{dz}{|z-x|^{n+2s}} & \text{if } j>i.
\end{cases}
\end{equation}
Consequently,  system \eqref{eq:fmc_interaction} reduces to the following ODE system 
\begin{equation}\label{eq:sphere_ODE}
\dot R_i(t) = c_0\left(-\frac{\omega}{R_i(t)^{2s}} + \sum_{j\neq i} \operatorname{sgn}\!\big(R_i(t) - R_j(t)\big)\, P(R_i(t),R_j(t))\right),\qquad i = 1, \dots, N.
\end{equation}
The first term on the right-hand side of \eqref{eq:sphere_ODE} is well defined as long as $R_i(t)>0$. The interaction terms are well defined as long as the radii remain pairwise distinct, in particular, this is guaranteed by the ordering condition \eqref{eq1-balls}.
If two radii coincide, the singularity of the kernel $|z-x|^{-(n+2s)}$ in \eqref{eq:sphere_interactions} is no longer integrable.

\subsection*{No collision of spheres.} We now show that along the flow \eqref{eq:sphere_ODE}, two adjacent spheres never touch.  
Given an initial configuration 
\begin{equation}\label{initialcond:spheres}
R_i(0)=R_i^0,\quad\text{with}\qquad R^0_1 > \cdots > R^0_N>0,
\end{equation}
we will show that the ordering \eqref{eq1-balls} is preserved and the configuration can degenerate only when the innermost sphere goes extinct.

Denote by
\begin{align*}
    \mathcal{D} := \left\{(R_1, \ldots, R_N) \in \mathbb{R}^N: R_1 > R_2> \cdots >R_N >0 \right\}.
\end{align*}
\begin{lem}\label{lem:non_collision}
    Let $(R_1,\ldots,R_N) \in C^1([0,T);\mathcal{D})$ be any solution of \eqref{eq:sphere_ODE}. 
    For $t \in [0,T)$, define 
    \begin{align*}
        \gamma(t) :=\min_{1 \le k \le N-1} (R_k(t) - R_{k+1}(t)).
    \end{align*}
    Then, $\gamma$ is a monotone nondecreasing function in $[0,T)$. 
\end{lem}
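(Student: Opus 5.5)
The plan is to show that the lower right Dini derivative of $\gamma$ is nonnegative at every $t\in[0,T)$. Since $\gamma$ is continuous, this gives monotonicity by the standard Dini-derivative lemma. Because $\gamma$ is the minimum of the finitely many $C^1$ functions $g_k:=R_k-R_{k+1}$, we have
\[
D^+_{\!*}\gamma(t)\ \ge\ \min\{\dot g_k(t):\ k \text{ active at } t,\ \text{i.e. } g_k(t)=\gamma(t)\}.
\]
So it suffices to fix $t$ and an active index $k$ and prove $\dot R_k-\dot R_{k+1}\ge 0$ there. We use \eqref{eq:sphere_ODE} with $K(z):=|z|^{-n-2s}$. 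By rotational invariance we evaluate the velocity of sphere $i$ at $x_i:=R_ie_1$. We then split $\dot R_k-\dot R_{k+1}$ into three kinds of terms.

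\emph{Curvature and pair terms.} The curvature contribution is $c_0\omega\,(R_{k+1}^{-2s}-R_k^{-2s})>0$. The mutual interaction of spheres $k$ and $k+1$ enters as
\[
c_0\Big(\int_{B_{R_{k+1}}}K(z-x_k)\,dz+\int_{B_{R_k}^c}K(z-x_{k+1})\,dz\Big)>0 .
\]
This is the repulsive term, and it is of order $\gamma^{-2s}$.

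\emph{Remaining spheres.} Set $f_j(r):=\int_{B_{R_j}}K(z-re_1)\,dz$ for $r>R_j$, and $h_j(r):=\int_{B_{R_j}^c}K(z-re_1)\,dz$ for $r<R_j$. Since $\mathbf 1_{B_{R_j}}$ and a truncation $\min\{K,M\}$ are radially symmetric decreasing, their convolution is radially symmetric decreasing (Riesz rearrangement). Letting $M\to\infty$ by monotone convergence away from the sphere, $f_j$ is decreasing and $h_j$ is increasing. The spheres $j>k+1$ contribute $c_0\,(f_j(R_k)-f_j(R_{k+1}))\le0$, and the spheres $j<k$ contribute $c_0\,(h_j(R_{k+1})-h_j(R_k))\le0$. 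These have the wrong sign, so they must be absorbed by the pair term.

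\emph{Absorbing the wrong-sign terms.} This is the main obstacle. Because $k$ is active, every other gap is at least $\gamma=R_k-R_{k+1}$, so $R_{k+1}-R_j\ge(k+1-j)\gamma$-type separations hold for the inner spheres, and similarly for the outer ones. I would first write each wrong-sign difference as a kernel integral over a region. For example,
\[
f_j(R_{k+1})-f_j(R_k)=\int_{B_{R_j}}\big(K(z-x_{k+1})-K(z-x_k)\big)\,dz .
\]
After the reflection across the hyperplane bisecting $x_k$ and $x_{k+1}$, this is dominated by $\int K(z-x_k)$ over the reflected image of $B_{R_j}$ minus $B_{R_j}$. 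That region should lie inside the shell $B_{R_{k+1}}\setminus B_{R_{k+2}}$, hence inside $B_{R_{k+1}}$, and disjoint images for different $j$ should allow summing the bounds, in telescoping fashion, into at most $\int_{B_{R_{k+1}}}K(z-x_k)\,dz$. The outer terms would be treated symmetrically, with reflected regions inside $B_{R_k}^c$ bounded by the second pair integral. If exact containment fails, the fallback is to use the gap condition to bound all wrong-sign terms by $C\sum_{m\ge2}(m\gamma)^{-2s}$-type tails, which scale like $\gamma^{-2s}$. This fallback is not sufficient by scaling alone, so the reflection and containment argument is the step to make rigorous. Together with the positive curvature term it gives $\dot g_k\ge0$ and hence the monotonicity of $\gamma$.
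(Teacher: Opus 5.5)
Your reduction is sound, and it matches the paper's decomposition. The Dini-derivative argument, the split of $\dot R_k-\dot R_{k+1}$ into the curvature term, the pair integrals $P_{k,k+1}=f_{k+1}(R_k)$ and $P_{k+1,k}=h_k(R_{k+1})$, and the signs of the remaining terms are all correct.

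The gap is the absorption step, which is the entire content of the lemma, and the mechanism you propose for it fails. The reflection across $\{z_1=(R_k+R_{k+1})/2\}$ sends $B_{R_j}$ to the ball of radius $R_j$ centered at $(R_k+R_{k+1})e_1$. The point of that ball closest to the origin is at distance $R_k+R_{k+1}-R_j>R_k$, so the image lies entirely outside $B_{R_k}$ and is disjoint from $B_{R_j}$. It is certainly not inside $B_{R_{k+1}}$. Your bound therefore collapses to the trivial estimate $f_j(R_{k+1})-f_j(R_k)\le f_j(R_{k+1})$.

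This trivial bound cannot be summed against $P_{k,k+1}$. In the flat regime with all gaps equal to $\gamma$, one has $f_j(R_{k+1})\approx\frac{C_{n,s}}{2s}\big((j-k-1)\gamma\big)^{-2s}$ while $f_{k+1}(R_k)\approx\frac{C_{n,s}}{2s}\gamma^{-2s}$. Already $1+2^{-2s}>1$, and the curvature term is negligible there, so it cannot compensate. Placing all the difference regions in the single shell $B_{R_{k+1}}\setminus B_{R_{k+2}}$ is also the wrong bookkeeping.

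The missing idea is to use the \emph{translation} by $\gamma e_1$, which also sends $x_{k+1}$ to $x_k$. Substituting $z=w-\gamma e_1$ gives $f_j(R_{k+1})=\int_{B_{R_j}(\gamma e_1)}K(w-x_k)\,dw$. Since every \emph{consecutive} gap satisfies $R_{j-1}-R_j\ge\gamma$, you get $B_{R_j}(\gamma e_1)\subset B_{R_j+\gamma}\subset B_{R_{j-1}}$. Hence $f_j(R_{k+1})\le f_{j-1}(R_k)$ for $j\ge k+2$, and the sum telescopes:
\[
\sum_{j=k+2}^{N}\big(f_j(R_{k+1})-f_j(R_k)\big)\le f_{k+1}(R_k)-f_N(R_k)\le P_{k,k+1}.
\]
Equivalently, the difference region for index $j$ lies in the shell $B_{R_{j-1}}\setminus B_{R_j}$, and these shells are disjoint subsets of $B_{R_{k+1}}$.

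The outer spheres are handled the same way with the translation $z=w+\gamma e_1$. The inclusion $\{|w+\gamma e_1|>R_j\}\subset\{|w|>R_{j+1}\}$ gives $h_j(R_k)\le h_{j+1}(R_{k+1})$ for $j\le k-1$. This telescopes to a bound by $h_k(R_{k+1})=P_{k+1,k}$. This is exactly the paper's inequality $S_p\le Q_{p-1}$. With it, your Riesz-rearrangement monotonicity of $f_j,h_j$ is unnecessary, and the strictly positive curvature term gives $\dot R_k-\dot R_{k+1}>0$.
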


\begin{proof}
Since each $R_k$ is continuously differentiable on $[0,T)$, the function $\gamma$, being the minimum of finitely many continuously differentiable functions, is locally Lipschitz continuous and therefore differentiable almost everywhere. Let $t\in(0,T)$ be a point at which $\gamma$ is differentiable, and choose $i\in\{1,\ldots,N-1\}$ such that
$$\gamma(t)= R_i(t) - R_{i+1}(t).$$
We will show that 
\begin{equation}\label{eq:non_collision}
        \dot{R}_i(t) - \dot{R}_{i+1}(t) \geq c_0 \omega \left( \frac{1}{R_{i+1}(t)^{2s}} - \frac{1}{R_i(t)^{2s}} \right)>0.
    \end{equation}
   For simplicity, we drop the dependence on $t$ in what follows.
     Let $e \in \mathbb{S}^{n-1}$ and consider the two points on the closest pair of spheres on a common ray through $e$,
    \begin{align*}
        x:= R_i e, \qquad x':= R_{i+1}e, \qquad |x-x'| = R_i - R_{i+1} = \gamma.
    \end{align*}

\noindent
For $R>0$, denote the interaction kernel
\begin{equation}\label{eq:spheres_kernel}
    K_R(z):= \frac{1}{|z-Re|^{n+2s}} \geq 0.
\end{equation}
Recall definition \eqref{eq:sphere_interactions}, and set $P_{i,j}:=P(R_i,R_j)$.  By rotational invariance, we can write
\begin{align*}
    P_{i,i+1} = \int_{ \{|z| < R_{i+1} \} } K_{R_i}(z)\, dz, \qquad P_{i+1,i} = \int_{ \{|z| > R_{i} \} } K_{R_{i+1}}(z)\, dz.
\end{align*}
Keeping in mind that the integration domain in
\eqref{eq:sphere_interactions} depends on the relative ordering of the
indices, we group the interaction terms as follows
\begin{align*}
    \dot{R}_{i} - \dot{R}_{i+1} &= c_0 \Big[ \omega \Big( R_{i+1}^{-2s} - R_i^{-2s} \Big) + \Big( \sum_{j>i} P_{i,j} - \sum_{j>i+1} P_{i+1,j} \Big) + \Big( \sum_{j \le i} P_{i+1,j}- \sum_{j<i} P_{i,j} \Big) \Big] \\
    & = c_0 \Big[ \omega \Big( R_{i+1}^{-2s} - R_i^{-2s} \Big) + \Big( P_{i,i+1} - \sum_{j >i+1} [P_{i+1,j} - P_{i,j}]\Big) \\ &\qquad\quad +\Big( P_{i+1,i} - \sum_{j<i}[P_{i,j}-P_{i+1,j}] \Big) \Big].
\end{align*}
Since $R_{i+1}<R_i$, we have $\omega (R_{i+1}^{-2s}-R_i^{-2s}) >0$. Therefore, to prove \eqref{eq:non_collision}, it suffices to show that
\begin{equation}\label{eq:non_collision_inequalities}
    P_{i,i+1} - \sum_{j >i+1} [P_{i+1,j} - P_{i,j}] \geq 0, \qquad P_{i+1,i} - \sum_{j<i}[P_{i,j}-P_{i+1,j}] \geq 0.
\end{equation}
We will prove the first inequality. 
If $i=N-1$, the sum is empty,
and the claim follows immediately from $P_{i,i+1}\geq 0$. Otherwise,
for $p=0,\ldots,N-i-1$, define $\rho_p := R_{i+1+p}$. Thus, $\rho_0 = R_{i+1}$ and $\rho_{N-i-1} = R_N$. 
Since $\gamma>0$ is the smallest gap between consecutive radii, we have
\begin{align}\label{p_{p-1}-p_{p}bound}
    \rho_{p-1} - \rho_p \geq \gamma\qquad p=1,\ldots,N-i-1.
\end{align}
Next, define
\[
Q_p
:=
\int_{\{|z|<\rho_p\}}K_{R_i}(z)\,dz
=
P_{i,i+1+p},
\qquad
p=0,\ldots,N-i-1,
\]
and
\[
S_p
:=
\int_{\{|z|<\rho_p\}}K_{R_{i+1}}(z)\,dz
=
P_{i+1,i+1+p},
\qquad
p=1,\ldots,N-i-1.
\]
In this notation, the first inequality in
\eqref{eq:non_collision_inequalities} becomes
\[
Q_0-\sum_{p=1}^{N-i-1}\left(S_p-Q_p\right)\geq 0.
\]
In the definition of $Q_p$, perform the change of variables
$w=z-\gamma e$. Since $R_i-\gamma=R_{i+1}$, we have $|z-R_ie| = |w-R_{i+1}e|$.
Therefore, recalling the definition of $K_R$ in
\eqref{eq:spheres_kernel}, we obtain
\begin{align*}
    Q_p = \int_{ \{ |z|< \rho_p \} } K_{R_i}(z)\,dz = \int_{ \{ |w+\gamma e| < \rho_p \}} K_{R_{i+1}}(w)\, dw.
\end{align*}
On the other hand, \eqref{p_{p-1}-p_{p}bound} implies that   $\{ |w| < \rho_p \} \subseteq \{ |w +\gamma e| < \rho_{p-1} \}$. Therefore, 
\begin{align*}
    S_p =
\int_{\{|w|<\rho_p\}}K_{R_{i+1}}(w)\,dw
\leq
\int_{\{|w+\gamma e|<\rho_{p-1}\}}
K_{R_{i+1}}(w)\,dw
=
Q_{p-1}.
\end{align*}
It follows that
\begin{align*}
    \sum_{p=1}^{N-i-1}(S_p - Q_p) \le \sum_{p=1}^{N-i-1}(Q_{p-1} - Q_p) = Q_0 - Q_{N-i-1} = P_{i,i+1} - \int_{ \{ |y|<R_N \} } K_{R_i}\,dy \le P_{i,i+1}.
\end{align*}
 This proves the first inequality in
\eqref{eq:non_collision_inequalities}.
The second inequality follows
from an analogous argument.

From \eqref{eq:non_collision}, we deduce that
\[
\dot{\gamma}(t)
=
\dot{R}_i(t)-\dot{R}_{i+1}(t)
>0.
\]
Since $\dot{\gamma}>0$ at any point of differentiability of $\gamma$, and $\gamma$ is
locally Lipschitz continuous, it follows that $\gamma$ is
nondecreasing on $[0,T)$. This concludes the proof.
\end{proof}
By standard ODE theory, there exists a unique maximal solution to
\eqref{eq:sphere_ODE} with initial condition
\eqref{initialcond:spheres}, defined on an interval $[0,T)$. By
Lemma~\ref{lem:non_collision},
\[
\gamma(t)\geq\gamma(0)>0,
\qquad t\in[0,T),
\]
so no two adjacent spheres can collide. Consequently, the solution
remains in $\mathcal D$ as long as $R_N(t)>0$. Moreover,
\[
\dot R_N(t)
=
c_0\left(
-\frac{\omega}{R_N(t)^{2s}}
-\sum_{j<N}P_{N,j}(t)
\right)
\leq
-\frac{c_0\omega}{R_N(t)^{2s}},
\]
and hence the innermost sphere becomes extinct in finite time.
Therefore, the maximal existence time satisfies
\[
\lim_{t\to T^-}R_N(t)=0.
\]



\end{document}